\def \RR {\mathbb R}
\def \NN {\mathbb N}
\def \ZZ {\mathbb Z}
\def \eps {\varepsilon}
\def \tD {\tilde D}
\newtheorem{theorem}{Theorem}[section]
\newtheorem{theorem'}{Theorem}
\newtheorem{definition}[theorem]{Definition}
\newtheorem{lemma}[theorem]{Lemma}
\newtheorem{proposition}[theorem]{Proposition}
\newtheorem{corollary}[theorem]{Corollary}
\newtheorem*{claim*}{Claim}
\theoremstyle{definition}
\newtheorem{remark}[theorem]{Remark}
\def\myffrac#1#2 in #3{\raise 2.6pt\hbox{$#3 #1$}\mkern-1.5mu\raise 0.8pt\hbox{$
		#3/$}\mkern-1.1mu\lower 1.5pt\hbox{$#3 #2$}}
\def\qed{\hfill $\vcenter{\hrule height .3mm
		\hbox {\vrule width .3mm height 2.1mm \kern 2mm \vrule width .3mm
			height 2.1mm} \hrule height .3mm}$ \bigskip}
\def \Len {{\rm Len}}
\def \diam {{\rm diam}}
\newcommand{\norm}[1]{\left\lVert#1\right\rVert}
\newcommand{\tr}{\triangle}
\title{A finiteness principle for distance functions on Riemannian surfaces with H{\"o}lder continuous curvature}
\author{Rotem Assouline}
\date{}
\begin{document}

\maketitle
\abstract{We study distance functions from geodesics to points on Riemannian surfaces with H{\"o}lder continuous Gauss curvature, and prove a finiteness principle in the spirit of Whitney extension theory for such functions. Our result can be viewed as a finiteness principle for isometric embedding of a certain type of metric spaces into Riemannian surfaces, with control over the H{\"o}lder seminorm of the Gauss curvature.}

\section{Introduction}

Consider a function $\rho$ measuring the distance from a unit-speed geodesic $\gamma$ on a Riemannian surface $(M,g)$ to a point $p \in M$. Certain analytic properties of $\rho$ are related to the Gauss curvature $K$ of the metric $g$. For instance, if $K$ is nonpositive, then $\rho$ is convex; and if $K$ is $\alpha$-H{\"o}lder, which will be the case throughout this paper, then $\rho$ is $C^{2,\alpha}$, assuming that $\gamma$ is contained in a punctured strongly convex neighbourhood of $p$, see e.g. Lemma \ref{geoeqlemma} below. 

\medskip
Given a 1-Lipschitz, positive function $\rho$ on an interval $I \subseteq \RR$, is it possible to tell whether $\rho$ arises as a distance function from a unit-speed geodesic $\gamma : I \to M$ to a point $p \in M$, for an ambient surface $M$ having bounded geometry in some sense?

\medskip
For example, consider the function $\rho(t) = \sqrt{1 + t^2} - c$ for $0 \le c < 1$. If $c = 0$ then $\rho$ is the distance function from the line $\gamma(t) = (1,t)$ to the point $p = (0,0)$ on the Euclidean plane. But if we take $c$ which is very close to $1$ and try to construct a Riemannian surface admitting $\rho$ as a distance function from a unit-speed geodesic to a point, we will find that the Gauss curvature of the surface will have to be very large somewhere. This follows for example from Lemma \ref{kappaivt} below.

\medskip
As a slightly more delicate example one can take $\rho(t) = \sqrt{\eps^2 + t^2} + \eps^{3 + \beta}$ for $0 < \beta \le 1$ and $0 < \eps \ll 1$. The function $\rho$ is very close to a Euclidean distance function. However, if we wanted to realize $\rho$ as a distance function on a Riemannian surface $M$, then the $\alpha$-H{\"o}lder seminorm of the Gauss curvature of $M$ would have to be of order at least $\eps^{\beta - \alpha}$, for every $0 < \alpha \le 1$ (this too can be shown using Lemma \ref{kappaivt}). In particular, the $\beta$-H{\"o}lder seminorm of $K$ must be bounded below by a constant independent of $\eps$.

\medskip 
By a {\it$C^{2,\alpha}$ Riemannian surface} we mean a two-dimensional Riemannian manifold with bounded, $\alpha$-H{\"o}lder continuous Gauss curvature, and with injectivity radius bounded from below. We use the same parameter $H$ to bound all three quantities, see Definition \ref{C2asurfacedef} below for the precise definition of a $C^{2,\alpha}$ Riemannian surface with constant $H$. 

\medskip
1-Lipschitz continuity and $C^{2,\alpha}$-smoothness are properties which can be detected by looking at 2 and 4-point samples from the function $\rho$, respectively. Does the property of \emph{being a distance function on a $C^{2,\alpha}$ surface}, together with a quantitative bound on the constant $H$, also boil down to some condition on the values of $\rho$ on $k$-point subsets, for some universal $k$?

\medskip
Our main result is the following finiteness principle, which answers this question to the affirmative assuming that $\rho$ is bounded by a small enough constant.

\begin{theorem'}\label{mainthm}
	There exist universal constants $C_1, C_2$ such that the following holds:

Let $\rho : I \to (0,C_1]$, where $I \subseteq \RR$ is an open bounded interval. Assume that for every subset $S \subseteq I$ consisting of at most 12 points, there exists a $C^{2,\alpha}$ Riemannian surface $(M_S, g_S)$ with constant 1, a point $p_S \in M_S$ and a unit speed geodesic $\gamma_S: I \to M_S$ such that $d_{g_S}(\gamma_S(s), p) = \rho(s)$ for every $s \in S$.

	Then, there exists a $C^{2,\alpha}$ Riemannian surface $M$ with constant $C_2$, a point $p \in M$ and a unit - speed geodesic $\gamma : I \to M$ such that $d_g(\gamma(t), p) = \rho(t)$ for every $t \in I$.
\end{theorem'}

\begin{remark}\normalfont
The condition on the twelve points in Theorem \ref{mainthm} is  essentially given in Proposition \ref{rhoestppn}. The fact that twelve points suffice can be roughly understood in the following way: using three nearby points, one can evaluate the second derivative of $\rho$ at a given point; two such ``three-point clusters'' are required in order to estimate the radial derivative of a quantity depending on $\rho,\dot\rho$ and $\ddot\rho$; and twelve points are required in order to evaluate the difference between the value of this radial derivative at two different scales. Thus verifying the conditions in Proposition \ref{rhoestppn} requires arranging the twelve points in a configuration involving three different scales.

We do not know whether the number 12 in Theorem \ref{mainthm} is sharp. Since every four-point metric space containing one collinear triple always embeds in a surface of constant curvature (see below), the sharp number is at least 4.
\end{remark}

Theorem \ref{mainthm} is inspired by finiteness principles from the theory of extensions of functions, initiated by Whitney \cite{W34}. For a comprehensive survey of Whitney extension theory, including finiteness principles, we refer the reader to Fefferman and Israel \cite{Fe20}. 

\medskip
Theorem \ref{mainthm} can be viewed as a special case of the following problem: is there a finiteness principle for isometric embeddings of metric spaces into Riemannian manifolds? i.e., can embeddability of a given metric space into a sufficiently regular Riemannian manifold be detected by examining its finite subspaces of cardinality bounded by some constant depending on the dimension and the regularity class alone? While it is hard to believe that the answer is generally yes, it might be the case that some classes of metric spaces do admit such a finiteness principle. 

\medskip The problem of embedding a metric space into Euclidean space goes back to Cayley \cite{Ca}. Menger \cite{Me} and Blumenthal \cite{Blum} considered isometric embeddings into spaces of constant curvature, and proved that a metric space $X$ embeds isometrically into the $n$-dimensional space of constant curvature $K_0 \in \RR$ if and only if every finite subspace of $X$ constisting of at most $n+3$ points admits such an embedding. Wald \cite{Wa} studied isometric embeddings of four-point metric spaces into surfaces of constant curvature, and proved that if $X$ is a metric space consisting of four points, of which exactly three points are collinear (i.e. embed isometrically in $\RR$), then there exists a unique $K_0 \in \RR$ such that $X$ embeds isometrically into the surface of constant curvature $K_0$. Berestovskii \cite{Be} characterized length spaces with two-sided curvature bounds in the sense of Alexandrov (see e.g. \cite{BI}) in terms of the embedding curvatures of their four-point subspaces.  Approximate interpolation of metric spaces by Riemannian manifolds of bounded geometry was studied in \cite{FIKLN}.

\medskip In order to construct the surface $M$ from the conclusion of Theorem \ref{mainthm}, we use the following characterization of the coefficients of $C^{2,\alpha}$ metrics (see Definition \ref{C2alphametricdef}) in polar normal coordinates, which we prove in Section \ref{coeffsec}. 

\begin{restatable}{theorem'}{Gthm}\label{Gthm}
    Let $0<\alpha\le 1$, let $R>0$ and let $G$ be a positive function defined on $B_R(0)\setminus\{0\}\subseteq \RR^2$. The two statements below are equivalent in the following sense: if 1. holds for some $\tilde H<\pi^2/4R^2$, then 2. holds with $ H=C\tilde H$, and if 2. holds for some $H< \pi^2/64R^2$, then 1. holds with $\tilde H=CH$, where $C>0$ is a universal constant.
    \begin{enumerate}
        \item The metric
        \begin{equation}\label{metricinpolarconst}
            g=dr^2+G^2d\theta^2.
        \end{equation}
		on $B_R(0)$ is a strongly convex $C^{2,\alpha}$ metric with constant at most $\tilde H$.
        \item The following conditions hold:
        \begin{enumerate}
            \item The function $G$ is $C^2$ in $r$.
            \item $G\to 0$ and $G/r\to 1$ as $r\to 0$.
            \item The function $K:=-\partial_r^2G/G$ satisfies $\norm{K}_\infty\le  H$ and $\norm{K}_\alpha\le  H^{1+\alpha/2}$.
         \end{enumerate}
    \end{enumerate}
\end{restatable}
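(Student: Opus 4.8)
The common engine of both implications is the classical identity that, for a metric written in geodesic polar form $g=dr^2+G^2\,d\theta^2$ about the origin, the coordinate field $\partial_\theta$ is a Jacobi field along each radial unit geodesic $r\mapsto(r,\theta)$, so that its norm $G$ satisfies the scalar Jacobi equation $\partial_r^2 G=-KG$ in which $K$ is precisely the Gauss curvature $-\partial_r^2 G/G$. Under this dictionary, item (c) of statement~2 is just the curvature bound, while (a) and (b) are the smoothness and boundary conditions at the pole that make $g$ a genuine metric closing up at the origin. I would organize the argument around three tools: (i) the Jacobi/curvature identity above; (ii) Sturm comparison of $G$ with the model solutions $\sin(\sqrt H\,r)/\sqrt H$ and $\sinh(\sqrt H\,r)/\sqrt H$, which under the radius restrictions $\tilde H<\pi^2/4R^2$, resp.\ $H<\pi^2/64R^2$, keeps $G$ positive and keeps $G/r$ pinched between two universal constants --- equivalently keeps $g$ uniformly bi-Lipschitz to the flat metric on $B_R(0)$; and (iii) a comparison between the $\alpha$-H\"older seminorm of $K$ measured in flat polar coordinates and the intrinsic one, which is where the universal constant $C$ in the statement comes from. (It is worth recording that the exponent $1+\alpha/2$ in the bound $\norm{K}_\alpha\le H^{1+\alpha/2}$ is forced by scaling: under $g\mapsto\lambda^2 g$ one has $\norm{K}_\infty\mapsto\lambda^{-2}\norm{K}_\infty$ and $\norm{K}_\alpha\mapsto\lambda^{-2-\alpha}\norm{K}_\alpha$, so a curvature of size $H$ sets the length scale $\lambda\sim H^{-1/2}$ and then the natural size of $\norm{K}_\alpha$ is $H\cdot H^{\alpha/2}=H^{1+\alpha/2}$.)

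To go from 1 to 2: since $g$ has the form $dr^2+(\cdot)\,d\theta^2$, the curves $\theta=\mathrm{const}$ are unit-speed geodesics emanating from $0$, and writing the Jacobi equation for $\partial_\theta$ along them gives $\partial_r^2 G=-KG$ with $K$ the Gauss curvature; since a $C^{2,\alpha}$ metric has (at least) continuous curvature, this is item (a) together with the identity $K=-\partial_r^2 G/G$. That $g$ is a $C^{2,\alpha}$ metric through the pole, here written in polar coordinates, forces $G\to 0$ and $G/r\to 1$ as $r\to 0$: comparing with the normal-coordinate expansion $g_{ij}=\delta_{ij}-\tfrac13 R_{ikjl}x^kx^l+o(|x|^2)$ at the origin pins down the $1$-jet of $G$ in $r$. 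That is item (b). Finally the curvature bounds from Definition~\ref{C2alphametricdef} give $\norm{K}_\infty\le\tilde H$ and an intrinsic $\alpha$-H\"older bound; by tool (ii) and $R\lesssim\tilde H^{-1/2}$ the metric and flat distances are comparable with universal constants, so the flat-coordinate seminorm of $K$ is at most $C\tilde H^{1+\alpha/2}$. This is item (c) with $H=C\tilde H$.

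To go from 2 to 1: from (a), (b) and $\partial_r^2 G=-KG$ with $\norm{K}_\infty\le H$, Sturm comparison against $\partial_r^2 v=\pm Hv$, $v(0)=0$, $v'(0)=1$, gives $\sin(\sqrt H\,r)/\sqrt H\le G\le\sinh(\sqrt H\,r)/\sqrt H$; since $R<\pi/(8\sqrt H)$ this makes $G>0$ on $(0,R)$, so $g$ is a genuine Riemannian metric there, with $G/r$ universally pinched around $1$. Away from the pole, bootstrapping the integral form $G(r,\theta)=r-\int_0^r(r-s)K(s,\theta)G(s,\theta)\,ds$ (using continuous dependence on parameters and the joint $\alpha$-H\"older continuity of $K$) shows that $G$, hence $G^2$, is jointly $C^{2,\alpha}$ there. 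At the pole, $K$ extends continuously to a \emph{$\theta$-independent} value $K(0)$ because $\norm{K}_\alpha<\infty$; integrating the Jacobi equation twice yields $G(r,\theta)=r-\tfrac16 K(0)r^3+O(r^{3+\alpha})$ with the remainder $\alpha$-H\"older in $(r,\theta)$ jointly, and substituting into the change to Cartesian normal coordinates $(x,y)=(r\cos\theta,r\sin\theta)$ exhibits the components of $g$ as $C^{2,\alpha}$ functions up to and including the origin. The Gauss curvature of $g$ equals $-\partial_r^2 G/G=K$, so $\norm{K}_\infty\le H$, and converting the flat seminorm to the intrinsic one (comparable by tool (ii)) gives $\norm{K}_{\alpha}\le CH^{1+\alpha/2}\le(CH)^{1+\alpha/2}$; thus $g$ is a $C^{2,\alpha}$ metric with constant $CH$. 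For strong convexity, any path $(r(t),\theta(t))$ issuing from $0$ has length $\ge\int|\dot r|$, so the radial geodesics minimize and, since $G>0$ rules out conjugate points, $\exp_0$ is a diffeomorphism onto $B_R(0)$; as $R$ is a small fraction of $\pi/\sqrt{\sup K^+}$, the standard criterion for strong convexity of geodesic balls (in the spirit of the convexity estimates used elsewhere in this paper) applies, giving statement~1 with $\tilde H=CH$.

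I expect the genuinely delicate parts to be the regularity of $g$ at the pole and the H\"older-seminorm comparison. For the pole one must show that the purely radial hypotheses (a)--(c) are \emph{exactly} what is needed for $g$ to be $C^{2,\alpha}$ in Cartesian normal coordinates: the crux is that $\norm{K}_\alpha<\infty$ forces $K$ to have a $\theta$-independent limit at $0$, and that the error in the Taylor expansion of $G$ is $O(r^{3+\alpha})$ in the strong, H\"older-controlled sense rather than merely $o(r^3)$. The comparison between the flat-coordinate and intrinsic H\"older seminorms of $K$ is unavoidable and is precisely what produces the universal loss $C$; it is also why the radius restrictions $\tilde H<\pi^2/4R^2$ and $H<\pi^2/64R^2$ appear, since they are what keep $g$ uniformly comparable to the flat metric --- and, in the second direction, leave enough room for $\exp_0$ to be a diffeomorphism and for $B_R(0)$ to be strongly convex.
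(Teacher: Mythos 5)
Your proposal for the implication $1\Rightarrow 2$ is fine and essentially matches the paper (it is an immediate consequence of the Gauss lemma, the Jacobi equation and the boundary conditions at the pole). The problem is in the implication $2\Rightarrow 1$, where the genuine difficulty of the theorem lies.

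You claim that ``bootstrapping the integral form $G(r,\theta)=r-\int_0^r(r-s)K(s,\theta)G(s,\theta)\,ds$ $\ldots$ shows that $G$, hence $G^2$, is jointly $C^{2,\alpha}$'' away from the pole, and you then argue that $g$ has $C^{2,\alpha}$ coefficients in Cartesian normal coordinates. This step is wrong. Condition~2 imposes \emph{no} regularity on $G$ or $K$ in the $\theta$-direction beyond $\alpha$-H\"older; the Jacobi equation is an ODE in $r$ in which $\theta$ enters purely as a parameter, so a Gronwall/bootstrap argument only gives that $G(r,\cdot)$ is $\alpha$-H\"older in $\theta$, never differentiable. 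Consequently the coefficients of $g$ in polar coordinates (and hence in Cartesian normal coordinates, related by the smooth map $(x,y)=(r\cos\theta,r\sin\theta)$ away from the origin) are in general only $C^\alpha$ in the angular direction. The paper explicitly flags this: right after the statement of Theorem~\ref{Gthm} it notes that ``beyond the bound on the H\"older seminorm of $K$, no regularity in the variable $\theta$ is required,'' and cites Hartman's observation that metrics with $C^{2,\alpha}$ coefficients in \emph{some} chart may fail to be even differentiable in polar (normal) coordinates except radially. Your approach tries to establish $C^{2,\alpha}$ regularity in exactly the chart where it is expected to fail.

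This is not a gap you can patch within the same strategy: the entire content of the direction $2\Rightarrow 1$ is to manufacture a coordinate chart in which the metric coefficients really are $C^{2,\alpha}$. The paper does this via Proposition~\ref{constructionppn}: it smooths $K$ to $K_n$, solves the Jacobi equation to get smooth metrics $g_n$, passes to isothermal coordinates $\tilde g_n=e^{u_n}dz\,\overline{dz}$ for each $n$, uses Eilat's estimate and elliptic regularity for the Liouville equation $\Delta u_n=-2(K_n\circ\varphi_n)e^{u_n}$ to obtain uniform $C^{2,\beta}$ bounds on $u_n$, and then takes limits (with the Riccati stability Lemma~\ref{riccatilemma} controlling the convergence of the distance functions) to produce a $C^2$ isothermal conformal factor $u$ and an isometry onto $(D,g)$. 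That isothermal chart is the chart in which $g$ is $C^{2,\alpha}$; there is no way around constructing something of this kind. Your proposal also leaves the strong convexity claim to a vague appeal to ``the standard criterion'' rather than to Lemma~\ref{strongconvlma}, which depends on the metric already being established as a $C^{2,\alpha}$ metric; but the central missing idea is the passage to isothermal coordinates.
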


Note that beyond the bound on the H{\"o}lder seminorm of $K$, no regularity in the variable $\theta$ is required. This agrees with the observation of Hartman \cite{hartman} that metrics with $C^{2,\alpha}$ coefficients in some coordinate system may fail to be even differentible in polar coordinates, except in the radial direction.
%endow the set $X = I\sqcup\{x_0\}$ (the disjoint union of the interval $I$ with an additional point $x_0$) with the metric $d$ defined by 
%$$d(x_0,t) = \rho(t), \quad t\in I, \qquad d(t,t') = |t-t'|, \quad t,t' \in I.$$
%Finding a Riemannian surface $(M,g)$, a point $p \in M$ and a unit-speed geodesic $\gamma : I \to M$ with $\rho = d_g(\gamma(\cdot),p)$ is equivalent to embedding $(X,d)$ isometrically into $M$.

\medskip \emph{Outline of the proof of Theorem \ref{mainthm}:} Let $\rho$ satisfy the hypothesis of Theorem \ref{mainthm}. We construct a metric $g$ on a disc $D \subseteq \RR^2$, which in polar normal coordinates takes the form $g = dr^2 + G^2 d\theta^2$. Theorem \ref{Gthm} provides us with sufficient conditions for such a metric to be $C^{2,\alpha}$ with constant $O(H)$. We also need to construct a curve $\gamma : I \to D$ of the form $\gamma = \rho e^{i\phi}$ for some function $\phi : I \to [-\pi,\pi)$. For $\gamma$ taking this form, the relation $d_g(\gamma(t),0) = \rho(t)$ automatically holds, as we are in polar normal coordinates and within the radius of strong convexity. The functions $G$ and $\phi$ now need to be chosen such that $\gamma$ is a unit-speed geodesic with respect to $g$. To this end we make some preliminary ``guess" $\phi_0$ for the function $\phi$, and define first  $\gamma_0 = \rho e^{i\phi_0}$. Using the geodesic equation, we determine the function $h : = \partial_rG/G$ on the curve $\gamma_0$. We then extend $h$ to the entire disc $D$ (more precisely, we extend the function $f : = h - \cot_{K_0}r$ for some $K_0\in\RR$, see \S\ref{Riccatifacts} for the definition of $\cot_{K}$). This is done in Proposition \ref{fprop}. In order to prove that there exists such an extension satisfying the estimates required by Theorem \ref{Gthm}, we use the hypothesis of Theorem \ref{mainthm} together with results on distance functions on $C^{2,\alpha}$ surfaces which we prove in Section \ref{distfuncsec}. After the function $h$ has been extended, we determine the function $\phi$, deform $\gamma_0$ into the final curve $\gamma$ and obtain $G$ by integration.

\medskip The rest of the paper is organized as follows. In Section \ref{preliminaries} we mention relevant facts about polar normal coordinates on Riemannian surfaces, prove a stability lemma for the Riccati equation (Lemma \ref{riccatilemma}) which will serve as a technical tool in subsequent sections, and state a version of Whitney's extension theroem which will be used in the proof of Theorem \ref{mainthm}. In section \ref{coeffsec} we prove Theorem \ref{Gthm}. In Section \ref{distfuncsec} we study functions of the form $\rho : = d_g(\gamma(\cdot),p)$, where $g$ is a $C^{2,\alpha}$ Riemannian metric, $p$ is a point on $M$ and $\gamma$ is a unit-speed geodesic, and in some sense characterize functions arising in this way. In section \ref{mainthmproofsec} we prove Theorem \ref{mainthm}.

\medskip
\textbf{Acknowledgements} The author would like to express his deep gratitude to his advisor at the Weizmann Institute of Science, prof. Bo'az Klartag, for close guidance and support. Partially supported by the Israel Science Foundation (ISF).

\section{Preliminaries}\label{preliminaries}

\medskip For two quantities $A,B$, we write $A\lesssim B$ or $A=O(B)$ if there is a positive constant $C$ such that $A\le CB$, and $A\sim B$ if there is a positive constant $C$ such that $C^{-1}B\le A\le CB$. Unless stated otherwise, the constant $C$ is universal.

\medskip For a point $p$ in a metric space, and a positive number $R$, the open ball of radius $R$ centered at $p$ is denoted by $B_R(p)$.

\medskip A \emph{curve} on a manifold is a path which has a well defined continuous tangent.

\medskip Let $0 < \alpha \le 1$. A function $f$ on a metric space $(X,d)$ is \emph{$\alpha$ - H{\"o}lder} if there exists a constant $C$ such that 
$$|f(x) - f(y)| \le C \cdot d(x,y) ^ \alpha,$$
and the minimal $C$ with this property is called the H{\"o}lder seminorm of $f$ and will be denoted by $\norm{f}_\alpha$. We record the following trivial, yet useful, properties of the H{\"o}lder seminorm:
\begin{equation}\begin{split}\label{holderprops}\norm{fg}_\alpha \le \norm{f}_\alpha\norm{g}_\infty + \norm{f}_\infty\norm{g}_\alpha, &\qquad \norm{f\circ g}_{\alpha\beta} \le \norm{f}_\alpha\norm{g}_{\beta}^{\alpha}, \quad 0<\alpha,\beta\le 1,\\  \norm{f}_\alpha \le \diam(X)^{\beta-\alpha}\norm{f}_\beta \qquad & 0<\alpha\le\beta\le1. \end{split}\end{equation}
Here $\diam X = \sup\{d(x,y) \mid x,y\in X\}$ is the diameter of $X$, and
$$\norm{f}_\infty = \sup_{x\in X}|f(x)|$$

(all the functions in this paper are continuous).

\medskip For a function $f: \RR \to \RR$ we write $f \in C^{k,\alpha}(\RR)$ if $f\in C^k(\RR)$ and $f^{(k)}$ is $\alpha$-H{\"o}lder.

\medskip We use both the notation $u'$ and $\dot u$ for the derivative of a function $u$ of one variable.

\subsection{$C^{2,\alpha}$ Riemannian surfaces}

\medskip A Riemannian manifold $(M,g)$ is said to be $C^2$ if each point of $M$ is contained in the domain of a coordinate chart in which the coefficients of the metric $g$ are $C^2$ functions (the manifold $M$ is assumed to posses a smooth structure).

\begin{definition}\label{C2asurfacedef}\normalfont
    Let $H>0$ and $0<\alpha\le 1$. A \emph{$C^{2,\alpha}$ Riemannian surface with constant $H$} is a complete two-dimensional $C^2$ Riemannian manifold satisfying 
    \begin{align*}
        \norm{K}_\infty&\le H,& \norm{K}_\alpha&\le H^{1+\alpha/2} & &\text{ and }&  \mathrm{inj}(M)&\ge \pi/\sqrt{H},
    \end{align*}
    where $K$ is the (Gaussian) curvature of $(M,g)$, and its H{\"o}lder seminorm is taken with respect to the Riemannian metric.
\end{definition}  

\medskip We say the $(M,g)$ is a $C^{2,\alpha}$ Riemannian surface (or simply a $C^{2,\alpha}$ surface) if there exists some $H>0$ such that the requirements of Definition \ref{C2asurfacedef} are satisfied.

\begin{remark}\label{scalermk}\normalfont
        If a surface $(M,\lambda^2g)$ is obtained from a surface $(M,g)$ by multiplication of the metric $g$ by a scalar $\lambda^2>0$, and $g$ is a $C^{2,\alpha}$ surface with constant $H$, then $(M,\lambda^2g)$ is $C^{2,\alpha}$ with constant $\lambda^{-2}H$.
\end{remark}

\medskip Examples of $C^{2,\alpha}$ Riemannian surfaces include simply-connected, complete surfaces of constant curvature, graphs of $C^{2,\alpha}$ functions and level sets of of $C^{2,\alpha}$ functions from $\RR^3$ to $\RR$ whose gradient is bounded away from zero, and metrics obtained from the Euclidean metric on $\RR^2$ by introducing a $C^{2,\alpha}$ conformal factor which is bounded away from zero.

\medskip The coefficients $g_{ij}$ of the metric of a $C^{2,\alpha}$ surface are $C^{2,\alpha}$ functions in some coordinate system:

\begin{theorem}[Kazdan and Deturck \cite{KD}] \label{Kazdet}
        Let $(M,g)$ be a $C^{2,\alpha}$ Riemannian surface. Then any point in $M$ is contained in the domain of a coordinate chart in which the components of $g$ are $C^{2,\alpha}$ functions.
    \end{theorem}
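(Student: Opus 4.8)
The approach I would take is the classical one: pass to isothermal coordinates and bootstrap via elliptic regularity. This is the method of Kazdan and DeTurck specialized to dimension two (their general argument uses $g$-harmonic coordinates, which I sketch as an alternative at the end).

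\textbf{Reduction and isothermal coordinates.} Fix $p\in M$. By the definition of a $C^2$ manifold, $p$ lies in the domain of a chart in which the $g_{ij}$ are $C^2$; shrinking, we may regard $g$ as a $C^2$ — in particular $C^{1,\alpha}$ — Riemannian metric on an open subset of $\RR^2$. By the Korn--Lichtenstein theorem (equivalently, local solvability of the Beltrami equation with H\"older data), there is a neighbourhood $V\ni p$ and a diffeomorphism $\Phi=(x,y)\colon V\to\Phi(V)\subseteq\RR^2$ with respect to which $g=e^{2\varphi}(dx^2+dy^2)$, where $\varphi$ is at least $C^{1,\alpha}$ (from a $C^{k,\alpha}$ metric the theorem produces a $C^{k+1,\alpha}$ diffeomorphism and a $C^{k,\alpha}$ conformal factor; we will only use continuity and positivity of $e^{2\varphi}$, together with enough regularity of $\varphi$ to make sense of the equation below).

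\textbf{The Liouville equation and Schauder.} In the coordinates $(x,y)$ the Gauss curvature obeys the Liouville equation
\[
 \Delta\varphi=-K\,e^{2\varphi},
\]
$\Delta$ being the flat Laplacian. Choose $V'\Subset V$ with $p\in V'$. On $\overline{V'}$ the factor $e^{2\varphi}$ lies between two positive constants, so the Riemannian distance $d_g$ is comparable to the Euclidean distance in these coordinates; hence the hypothesis that $K$ be $\alpha$-H\"older with respect to $g$ implies that $K$ is $\alpha$-H\"older with respect to the Euclidean metric on $V'$. Since $e^{2\varphi}$ is $C^{1,\alpha}$, the product $K e^{2\varphi}$ is $\alpha$-H\"older there by (\ref{holderprops}). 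Interior Schauder estimates for the Laplacian then give $\varphi\in C^{2,\alpha}(V'')$ for every $V''\Subset V'$. Consequently $e^{2\varphi}\in C^{2,\alpha}(V'')$, and in the chart $(x,y)$ the components $g_{11}=g_{22}=e^{2\varphi}$, $g_{12}=0$ are $C^{2,\alpha}$ functions. Restricting to a coordinate ball around $p$ inside $V''$ produces the desired chart, proving the theorem.

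\textbf{Main obstacle.} The only delicate point is the base of the bootstrap: producing isothermal coordinates in which $\varphi$ is regular enough to write down and exploit the Liouville equation, starting merely from a $C^2$ (not $C^\infty$) metric. This is exactly the content of the Korn--Lichtenstein theorem, and one must check that the regularity it delivers ($\varphi\in C^{1,\alpha}$ from a $C^{1,\alpha}$ metric) suffices to run Schauder — which it does. If one prefers to avoid isothermal coordinates, the original Kazdan--DeTurck route works verbatim in any dimension: $g$-harmonic coordinates, obtained by solving $\Delta_g u^i=0$, form a $C^{2,\alpha}$ chart when $g\in C^{1,\alpha}$, and in such a chart the metric satisfies a quasilinear elliptic system of the form
\[
 g^{kl}\partial_k\partial_l g_{ij}=-2\,\mathrm{Ric}_{ij}+Q_{ij}(g,\partial g)=-2K\,g_{ij}+Q_{ij}(g,\partial g),
\]
whose right-hand side is $\alpha$-H\"older (using that $K$ is $\alpha$-H\"older and that $Q_{ij}$ is a universal quadratic expression in $\partial g$ with $C^{1,\alpha}$ coefficients); Schauder estimates then upgrade the $g_{ij}$ to $C^{2,\alpha}$. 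Either way, the content is ``good coordinates plus elliptic regularity,'' with the subtlety confined to the first step.
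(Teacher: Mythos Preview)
The paper does not give its own proof of this theorem; it is quoted from DeTurck--Kazdan \cite{KD} and used as a black box. So there is nothing in the paper to compare against, and your proposal is in effect a sketch of the original argument.

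Your sketch is correct and is precisely the DeTurck--Kazdan mechanism specialized to surfaces: pass to isothermal (equivalently, $g$-harmonic) coordinates and bootstrap via Liouville/Schauder. The one point you flag as delicate is genuinely the only one: with $g$ merely $C^2$ the Korn--Lichtenstein change of chart is $C^{2,\alpha}$, so the conformal factor $\varphi$ is a priori only $C^{1,\alpha}$ and the Liouville equation must first be interpreted weakly (or obtained by smoothing and passing to the limit) before Schauder applies. You acknowledge this, and indeed the paper itself, when it needs a closely related statement in Proposition~\ref{constructionppn}, carries out exactly that smoothing-and-limit argument (approximate $K$ by $K_n$, get smooth isothermal data $u_n$ solving \eqref{liouville}, use \eqref{poissonestimate1}--\eqref{poissonestimate2} for uniform $C^{2,\beta}$ bounds, and pass to the limit). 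Either route --- weak Liouville plus Schauder, or approximation --- closes the gap, so your proposal is sound.
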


\begin{definition}\label{C2alphametricdef}\normalfont
A Riemannian metric on an open disk $D \subseteq \RR^2$ is said to be a \emph{$C^{2,\alpha}$ metric with constant $H$} if it satisfies the coditions of Definition \ref{C2asurfacedef}, excluding completeness and bounded injectivity radius.
\end{definition}

\subsection{Polar coordinates, the Riccati equation and curvature comparison}\label{Riccatifacts}

\medskip About any point $p$ in a Riemannian surface $(M,g)$ one can introduce normal coordinates, in which the coordinate map is the exponential map $\exp_p$, and lines through the origin correspond to geodesics emanating from $p$.
Using polar coordinates in $\RR^2$, one obtains a set of coordinates $(r,\theta)$, called \textit{polar normal coordinates}. As a consequence of the Gauss lemma, the metric $g$ can be written in polar normal coordinates as
\begin{equation}\label{metricinpolar}
    g=dr^2+G^2d\theta^2
\end{equation}
for some function $G=G(r,\theta)$.

\medskip When working in polar normal coordinates, we shall use the complex notation $re^{i\theta}$ to denote a point whose coordinates are $(r,\theta)$. The coordinate $\theta$ will vary between $-\pi$ and $\pi$.

A \textit{normal disc} on a Riemannian surface is a Riemannian disc on which normal coordinates may be defined (i.e. a disk $B_R(p)$ such that $\exp_p:B_R(0)\to B_R(p)$ is a diffeomorphism, where $B_R(0)\subseteq T_pM$). When $M$ is $C^{2,\alpha}$ with constant $H$, then by definition, any point is the center of a normal disc of radius $\pi/\sqrt{H}$. Whenever polar normal coordinates are used, the domain will be such a disc.

\medskip Polar normal coordinates are not optimal in terms of regularity. Indeed, by Theorem \ref{Kazdet}, if $M$ is a $C^{2,\alpha}$ surface, then there exists a coordinate chart about $p$ in which the coefficients of $g$ are $C^{2,\alpha}$, however polar normal coordinates do not always serve as such a chart -  the coefficient $G$ is only $C^\alpha$ in general \cite{hartman}. However, it is differentiable twice with respect to $r$ and satisfies:
\begin{equation}\label{Ginitialcond}
    G(r,\theta)\xrightarrow[]{r\to 0} 0  \qquad \text{and} \qquad  G(r,\theta)/r\xrightarrow[]{r\to 0}1 \qquad \text{ for all } \theta\in S^1,
\end{equation}
as well as the Jabobi equation
\begin{equation}\label{jacobieq}
    \partial^2_rG+KG=0.
\end{equation}

\medskip Differentiating \eqref{jacobieq} with respect to $r$ and setting
$$h := \frac{\partial_rG}{G}$$
Gives the \emph{Riccati equation}
\begin{equation}\label{riccatieq}
    \partial_rh+h^2+K=0
\end{equation}
with  the initial condition
\begin{equation}\label{riccatiinit}
    h=1/r + O(r) \text{ as } r\to0.
\end{equation}
In the case where $K$ is constant, the solution to \eqref{riccatieq},\eqref{riccatiinit} is
\begin{equation*}
    \cot_Kr:=
    \begin{cases}
    \sqrt{-K}\coth(\sqrt{-K}r) & K<0, \ r>0\\
    1/r & K=0, \ r>0\\
    \sqrt{K}\cot(\sqrt{K}r) & K>0, \  0<r<\pi/\sqrt{K}\\
    \end{cases}
\end{equation*}
and the solution to the Jacobi equation \eqref{jacobieq} with the initial condition \eqref{Ginitialcond} is
\begin{equation*}
    \sin_Kr:=
    \begin{cases}
    \sinh(\sqrt{-K}r)/\sqrt{-K} & K<0\\
    r & K=0\\
    \sin(\sqrt{K}r)/\sqrt{K} & K>0\\
    \end{cases}
\end{equation*}
It is also useful to introduce the functions
\begin{equation}\label{Phidef}
    \Phi(x):=
    \begin{cases}
        \sqrt {-x} \coth(\sqrt {-x}) & x<0\\
        1 & x=0\\
        \sqrt x\cot (\sqrt x) & 0<x<\pi^2
    \end{cases}
\end{equation}
and
    \begin{equation*}
    \Psi(x):=
    \begin{cases}
        \sinh(\sqrt {-x})/\sqrt{-x} & x<0\\
        1 & x=0\\
        \sin (\sqrt x)/\sqrt{x} & 0<x<\pi^2
    \end{cases}
\end{equation*}
which are strictly decreasing and analytic. Note that $x\cot_{K}(x)=\Phi(Kx^2)$ and $\sin_{K}(x)/x=\Psi(Kx^2)$.

\medskip The following formula for the geodesic curvature of a unit-speed curve in polar normal coordinates will be useful: 
\begin{lemma}\label{geodesiccurvaturelemma}
    Let $(M,g)$ be a $C^{2,\alpha}$ Riemannian surface and let $(r,\theta)$ be polar normal coordinates centered at $p\in M$. If $\gamma$ is a unit speed curve lying in a normal disc centered at $p$, and the function  $\rho(t):=d_g(\gamma(t),p)$ is $C^2$ and satisfies $|\dot\rho|\ne 1$, then
        \begin{equation}\label{Ddotgamma}
           \nabla_{\dot\gamma}\dot\gamma=-\sqrt{1-\dot\rho^2}\left(\frac{\ddot\rho}{1-\dot\rho^2}-h\right)N,
        \end{equation}
        where $N$ is the \emph{inward-pointing unit normal} to $\gamma$, which is defined to be the unique unit vector field along $\gamma$ such that $\left<\dot\gamma,N\right>=0$ and $\left<\partial_r,N\right><0$.
\end{lemma}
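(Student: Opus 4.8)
The plan is to compute $\nabla_{\dot\gamma}\dot\gamma$ directly in the polar normal coordinates $(r,\theta)$, using the fact that the metric takes the form $g=dr^2+G^2d\theta^2$ with $h=\partial_rG/G$. First I would record the Christoffel symbols of this metric: since $g_{rr}=1$, $g_{\theta\theta}=G^2$, $g_{r\theta}=0$, one gets $\Gamma^r_{\theta\theta}=-G\,\partial_rG = -hG^2$, $\Gamma^\theta_{r\theta}=\Gamma^\theta_{\theta r}=\partial_rG/G = h$, and all other symbols vanish (there is no $\theta$-derivative of $G$ entering, which is why only $C^0$-regularity in $\theta$ is needed). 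Writing $\gamma(t)=\rho(t)e^{i\phi(t)}$, i.e.\ $r(t)=\rho(t)$, $\theta(t)=\phi(t)$, the unit-speed condition reads $\dot\rho^2+G^2\dot\phi^2=1$, so that $G^2\dot\phi^2=1-\dot\rho^2$ along $\gamma$; in particular $\dot\phi\ne 0$ because $|\dot\rho|\ne 1$, and we may take $G\dot\phi = \sqrt{1-\dot\rho^2}$ after orienting $\phi$ appropriately. The inward normal $N$ is then the unit vector orthogonal to $\dot\gamma$ with negative $\partial_r$-component; concretely $N = -\sqrt{1-\dot\rho^2}\,\partial_r + (\dot\rho/G)\,\partial_\theta$, which one checks has unit length and $\langle\dot\gamma,N\rangle=0$.

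Next I would plug into the geodesic-curvature formula $\nabla_{\dot\gamma}\dot\gamma = (\ddot r + \Gamma^r_{\theta\theta}\dot\theta^2)\partial_r + (\ddot\theta + 2\Gamma^\theta_{r\theta}\dot r\dot\theta)\partial_\theta$. The $\partial_r$-component is $\ddot\rho - hG^2\dot\phi^2 = \ddot\rho - h(1-\dot\rho^2)$. For the $\partial_\theta$-component, I would differentiate the relation $G\dot\phi=\sqrt{1-\dot\rho^2}$ along $\gamma$: on the left, $\frac{d}{dt}(G\dot\phi) = (\partial_rG\,\dot\rho + \partial_\theta G\,\dot\phi)\dot\phi + G\ddot\phi$. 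Here I must be slightly careful since $G$ is only $C^\alpha$ in $\theta$; but the combination $\ddot\theta + 2h\dot r\dot\theta$ is exactly $\dot\phi^{-1}$ times a derivative that can be rearranged so that the problematic $\partial_\theta G$ term cancels — more precisely, using $G^2\dot\phi^2 = 1-\dot\rho^2$ and differentiating \emph{that} relation (both sides $C^1$ in $t$ by hypothesis on $\rho$), one gets $2G\dot\phi(\partial_rG\,\dot\rho\,\dot\phi + \partial_\theta G\,\dot\phi^2 + G\ddot\phi) = -2\dot\rho\ddot\rho$, hence $G\ddot\phi + \partial_\theta G\,\dot\phi^2 = -\dot\rho\ddot\rho/(G\dot\phi) - h\dot\rho\dot\phi$, and therefore $G(\ddot\phi + 2h\dot\rho\dot\phi) = -\dot\rho\ddot\rho/(G\dot\phi) + h\dot\rho\dot\phi G\dot\phi - \partial_\theta G\,\dot\phi^2 + \cdots$. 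I would then check that when this is assembled into $\nabla_{\dot\gamma}\dot\gamma$ and compared against $-\sqrt{1-\dot\rho^2}\bigl(\tfrac{\ddot\rho}{1-\dot\rho^2}-h\bigr)N$ with the above expression for $N$, both components match, the $\partial_\theta G$ contributions cancelling between the connection term and the normal's $\partial_\theta$-component — consistent with the known fact (Hartman) that $\nabla_{\dot\gamma}\dot\gamma$ is well-defined even though $G\notin C^1$ in $\theta$.

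The main obstacle I anticipate is precisely this bookkeeping with the $\partial_\theta G$ terms: one must verify that geodesic curvature is insensitive to the missing $\theta$-regularity, which amounts to showing the coefficient of $\partial_\theta G$ in $\nabla_{\dot\gamma}\dot\gamma$ (expanded in the coordinate frame) vanishes identically after using the unit-speed constraint. An alternative, cleaner route that avoids differentiating $G$ in $\theta$ altogether: decompose $\dot\gamma$ in the orthonormal frame $\{\partial_r, \frac{1}{G}\partial_\theta\}$ as $\dot\gamma = \dot\rho\,\partial_r + \sqrt{1-\dot\rho^2}\,e_\theta$ where $e_\theta = \frac1G\partial_\theta$, write $\dot\gamma = \cos\psi\,\partial_r + \sin\psi\,e_\theta$ for the angle $\psi$ with $\cos\psi=\dot\rho$, and use the standard formula that the geodesic curvature equals $\dot\psi + (\text{geodesic curvature of the }r=\text{const circle})\cdot(\text{component})$ — here the circles $r=\mathrm{const}$ have geodesic curvature $h$, and $\dot\psi = -\ddot\rho/\sqrt{1-\dot\rho^2}$ from differentiating $\cos\psi=\dot\rho$. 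Combining, $\nabla_{\dot\gamma}\dot\gamma = (\dot\psi + \text{something})N$; matching signs and the factor $\sqrt{1-\dot\rho^2}$ against \eqref{Ddotgamma} finishes it. I would present whichever of these two computations turns out shorter, but I expect the orthonormal-frame / turning-angle version to be the one that makes the cancellation manifest.
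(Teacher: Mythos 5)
Your approach -- computing $\nabla_{\dot\gamma}\dot\gamma$ in polar normal coordinates from the Christoffel symbols -- is the same ``simple computation in local coordinates'' the paper attributes to \cite{Thesis}, and the computation does close, but two statements in your plan are wrong as written and would derail a literal execution.

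First, the Christoffel symbols. You claim that besides $\Gamma^r_{\theta\theta}=-hG^2$ and $\Gamma^\theta_{r\theta}=h$ ``all other symbols vanish.'' This is false: $\Gamma^\theta_{\theta\theta}=\tfrac12 g^{\theta\theta}\partial_\theta g_{\theta\theta}=\partial_\theta G/G$, which is generally nonzero (and, per Hartman, may not even exist pointwise since $G$ is only $C^\alpha$ in $\theta$). Consequently the formula you propose to plug into, $\nabla_{\dot\gamma}\dot\gamma=(\ddot\rho+\Gamma^r_{\theta\theta}\dot\phi^2)\partial_r+(\ddot\phi+2\Gamma^\theta_{r\theta}\dot\rho\dot\phi)\partial_\theta$, is missing the term $\Gamma^\theta_{\theta\theta}\dot\phi^2\,\partial_\theta$. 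Second, you locate the cancellation of the $\partial_\theta G$ terms ``between the connection term and the normal's $\partial_\theta$-component'' -- but the normal $N=-\sqrt{1-\dot\rho^2}\,\partial_r+(\dot\rho/G)\partial_\theta$ contains no $\partial_\theta G$ at all. The actual cancellation is between $\Gamma^\theta_{\theta\theta}\dot\phi^2$ (the term you dropped) and the $\partial_\theta G\,\dot\phi^2$ that appears when you differentiate the unit-speed identity $G^2\dot\phi^2=1-\dot\rho^2$ to eliminate $\ddot\phi$. Indeed, $G\ddot\phi+\partial_\theta G\,\dot\phi^2=-hG\dot\rho\dot\phi-\dot\rho\ddot\rho/\sqrt{1-\dot\rho^2}$, and the full $\partial_\theta$-coefficient is $\ddot\phi+2h\dot\rho\dot\phi+(\partial_\theta G/G)\dot\phi^2=h\dot\rho\dot\phi-\dot\rho\ddot\rho/(G\sqrt{1-\dot\rho^2})$, with $\partial_\theta G$ gone; dropping $\Gamma^\theta_{\theta\theta}$ leaves a stray $-(\partial_\theta G/G)\dot\phi^2$ that does not cancel. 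With the corrected bookkeeping the components reassemble to $-\sqrt{1-\dot\rho^2}\bigl(\tfrac{\ddot\rho}{1-\dot\rho^2}-h\bigr)N$ as claimed.

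Your alternative route via the orthonormal frame $\{\partial_r,e_\theta\}$ with $e_\theta=\tfrac1G\partial_\theta$ is cleaner and sidesteps the issue entirely: writing $\dot\gamma=\cos\psi\,\partial_r+\sin\psi\,e_\theta$ with $\cos\psi=\dot\rho$, one gets $\nabla_{\dot\gamma}\dot\gamma=(\dot\psi+\omega)N$ where $\omega=\langle\nabla_{\dot\gamma}\partial_r,e_\theta\rangle=\dot\phi\,hG=h\sqrt{1-\dot\rho^2}$ (note $\nabla_{\partial_r}\partial_r=0$ and $\nabla_{\partial_\theta}\partial_r=h\partial_\theta$, neither of which involves $\partial_\theta G$), and $\dot\psi=-\ddot\rho/\sqrt{1-\dot\rho^2}$. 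This version never touches $\Gamma^\theta_{\theta\theta}$ and makes manifest why the result only needs $h=\partial_rG/G$, so I would present that one.
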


The proof, which is a simple computation in local coordinates, appears in  \cite{Thesis}.

\medskip The function $h$ has some intrisic geometric interpretations, which are worth mentioning, but which shall not be used directly here. If we set $\rho = const$ in \eqref{Ddotgamma}, we see that $h$ is (up to a sign) the geodesic curvature of a unit-speed circle. Moreover, $h$ is the Laplacian of the distance function from the point $p$: if $\Delta$ is the Laplace-Beltrami operator of $g$, then
$$ h = \Delta r .$$

\medskip The Riccati equation \eqref{riccatieq} satisfies the following well-known comparison principle (\cite{petersen2006riemannian}, Proposition 25):
\begin{lemma}\label{riccaticomparison}
    Let $h:(0,R]\to \RR$ be differentiable and satisfy
    \begin{equation*}
    \begin{split}
        h'(x)+h^2(x)+K(x) & =0\\
        h(x)  =1/x+O(x) & \text{ as } x\to0
    \end{split}
    \end{equation*}
    where $K:(0,R]\to\RR$ safisfies $|K|\le H$ for a constant $0<H\le\pi^2/R^2$. Then
    \begin{equation*}
        \cot_{H}x\le h(x) \le \cot_{-H}x
    \end{equation*}
    for all $x\in(0,R]$.
\end{lemma}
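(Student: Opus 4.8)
The plan is to compare $h$ with the two constant-curvature Riccati solutions $\cot_{H}$ and $\cot_{-H}$, which are exactly the functions solving $\partial_x u + u^2 + (\pm H) = 0$ with the normalization $u = 1/x + O(x)$ at $0$. Note first that everything is well-defined on $(0,R)$: by hypothesis $H \le \pi^2/R^2$, so $\sqrt{H}\,x < \pi$ for $x \in (0,R)$ and $\cot_{H}x = \sqrt{H}\cot(\sqrt H x)$ makes sense there, while $\cot_{-H}x = \sqrt{H}\coth(\sqrt{H}x)$ is defined for all $x>0$. In the degenerate case $H = \pi^2/R^2$ one has $\cot_{H}R = -\infty$, so the claimed lower bound is vacuous at $x = R$, and I will prove the lower bound on $(0,R)$ and the upper bound on $(0,R]$.

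For the lower bound I would set $u := h - \cot_{H}$ on $(0,R)$. Subtracting the two Riccati equations gives
\begin{equation*}
u' = -(h^2 - \cot_{H}^2) + (H - K) = -(h + \cot_{H})\,u + (H - K),
\end{equation*}
and since $K \le H$ this yields the linear differential inequality $u' + (h + \cot_{H})\,u \ge 0$. Fix $x_0 \in (0,R)$ and let $\mu(x) := \exp\!\left(\int_{x_0}^{x}(h + \cot_{H})(s)\,ds\right)$, which is positive and finite because $h + \cot_{H}$ is continuous on $(0,R)$. Then $(\mu u)' = \mu\!\left(u' + (h + \cot_{H})\,u\right) \ge 0$, so $\mu u$ is nondecreasing. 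To close the argument I would examine $x \to 0^{+}$: by \eqref{riccatiinit} we have $h(x) = 1/x + O(x)$, and the Taylor expansion $\cot_{H}x = 1/x - Hx/3 + O(x^3)$ gives $u(x) = O(x)$ and $(h + \cot_{H})(x) = 2/x + O(x)$. Hence $\int_{x_0}^{x}(h + \cot_{H})(s)\,ds = 2\ln x + c + o(1)$ for some constant $c$, so $\mu(x) \sim e^{c}x^{2}$ and therefore $\mu(x)u(x) = O(x^{3}) \to 0$. A nondecreasing function tending to $0$ at its left endpoint is nonnegative, so $\mu u \ge 0$, i.e. $h \ge \cot_{H}$ on $(0,R)$.

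The upper bound is entirely analogous: with $v := \cot_{-H} - h$ one gets $v' + (h + \cot_{-H})\,v = K + H \ge 0$, and the same integrating-factor argument applies, now using $\cot_{-H}x = 1/x + Hx/3 + O(x^3)$ near $0$; here $\cot_{-H}$ and $h$ are both continuous up to $x = R$, so the conclusion $v \ge 0$ holds on all of $(0,R]$. The only point that needs care is the passage to the singular endpoint $x \to 0^{+}$: one must verify that the integrating factor decays fast enough (like $x^{2}$) to annihilate the $O(x)$ discrepancy between $h$ and $\cot_{\pm H}$, and this is precisely what the matching $1/x$ leading terms of $h$ and of $\cot_{\pm H}$ guarantee. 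The mild secondary technicality is the degenerate endpoint $x = R$ when $H = \pi^{2}/R^{2}$, where the lower bound is trivially true since $\cot_{H}R = -\infty$.
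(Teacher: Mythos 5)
The paper does not prove Lemma \ref{riccaticomparison}; it cites it as Proposition 25 of Petersen's \emph{Riemannian Geometry} \cite{petersen2006riemannian}. Your self-contained argument is correct and is essentially the standard one: subtract the Riccati equations satisfied by $h$ and by the constant-curvature comparison solution $\cot_{\pm H}$, obtain a linear first-order differential inequality for the difference, multiply by the integrating factor $\mu(x)=\exp\int_{x_0}^x(h+\cot_{\pm H})$, and conclude monotonicity of $\mu\cdot(\text{difference})$. The one point that genuinely needs care --- and which you correctly identify and verify --- is the behaviour at the singular endpoint $x\to 0^+$: both $h$ and $\cot_{\pm H}$ blow up like $1/x$, but their leading singularities match (by \eqref{riccatiinit} and the Taylor expansion of $\cot_{\pm H}$), so the difference is $O(x)$ while the integrating factor decays like $x^2$, making $\mu u\to 0$; a nondecreasing function with limit $0$ at its left endpoint is nonnegative. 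Your handling of the degenerate case $H=\pi^2/R^2$ (where $\cot_H R=-\infty$ and the lower bound at $x=R$ is vacuous) is also fine. No gaps.
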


\begin{corollary}\label{Gcompcor}
    Let $G$ be a solution of \eqref{Ginitialcond} and  \eqref{jacobieq} on some disc $B_R(0)$, and assume $|K|\le H$ for some constant $H>0$. Then
        \begin{equation*}
        \sin_{H}r\le G(r,\theta)\le\sin_{-H}r \qquad \text{ for all } 0<r\le R \text{ and }\theta\in S^1.
    \end{equation*} In particular, if $R\le\pi/2\sqrt{H}$ then
    \begin{equation*}
        G(r,\theta)/r\sim 1 \qquad \text{ for all } 0<r\le R \text{ and }\theta\in S^1.
    \end{equation*}
\end{corollary}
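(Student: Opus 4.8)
The plan is to fix the angular variable and reduce the statement to the one-dimensional Riccati comparison already in hand, namely Lemma \ref{riccaticomparison}. Fix $\theta\in S^1$ and view $G=G(\cdot,\theta)$ as a function of $r$ on $(0,R]$. By \eqref{Ginitialcond} we have $G(r)/r\to 1$, so $G>0$ near the origin, and I would first check that $G>0$ on all of $(0,R)$. Indeed, if $r_0<R$ were a first zero of $G$, then on $(0,r_0)$ the function $h:=\partial_rG/G=(\log G)'$ satisfies the Riccati equation \eqref{riccatieq} with initial condition \eqref{riccatiinit}, so Lemma \ref{riccaticomparison}, applied on $(0,r_0]$ — valid since $H\le\pi^2/R^2<\pi^2/r_0^2$, which is the regime relevant for all applications of this corollary — bounds $h$ below by $\cot_Hr$, a quantity that stays finite up to $r_0<\pi/\sqrt H$. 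But $G'(r_0)<0$ (the alternative $G(r_0)=G'(r_0)=0$ forces $G\equiv0$ by uniqueness for \eqref{jacobieq}, contradicting $G(r)/r\to1$), so $h(r)\to-\infty$ as $r\to r_0^-$, a contradiction. Hence $G>0$ on $(0,R)$, and $h=(\log G)'$ is defined and solves \eqref{riccatieq}--\eqref{riccatiinit} there.

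Next I would apply Lemma \ref{riccaticomparison} on $(0,R]$ to obtain $\cot_Hr\le h(r)\le\cot_{-H}r$, and integrate. Since $(\log\sin_Hr)'=\cot_Hr$ (on $(0,R]\subseteq(0,\pi/\sqrt H)$) and $(\log\sin_{-H}r)'=\cot_{-H}r$, the functions $\log\bigl(G(r)/\sin_Hr\bigr)$ and $\log\bigl(G(r)/\sin_{-H}r\bigr)$ are respectively nondecreasing and nonincreasing on $(0,R]$. As $r\to0^+$ both $G(r)/r$ and $\sin_{\pm H}(r)/r$ tend to $1$, so both logarithms tend to $0$; by monotonicity the first is therefore $\ge0$ and the second $\le0$ on $(0,R]$, which is exactly $\sin_Hr\le G(r,\theta)\le\sin_{-H}r$. (If one insists on $H>\pi^2/R^2$, the lower bound is vacuous on the part of $(0,R]$ where $\sin_Hr\le0$ and follows as above on $(0,\pi/\sqrt H]$, while the upper-bound argument goes through unchanged as long as $G>0$.)

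Finally, for the ``in particular'' clause, assume $R\le\pi/2\sqrt H$, so that $Hr^2\le\pi^2/4<\pi^2$ for every $0<r\le R$. Using the identity $\sin_K(x)/x=\Psi(Kx^2)$ from \S\ref{Riccatifacts}, the two-sided bound just proved reads $\Psi(Hr^2)\le G(r,\theta)/r\le\Psi(-Hr^2)$; since $\Psi$ is continuous and strictly decreasing, $\Psi(Hr^2)\ge\Psi(\pi^2/4)=2/\pi$ and $\Psi(-Hr^2)\le\Psi(-\pi^2/4)=\tfrac{2}{\pi}\sinh(\pi/2)$, so $\tfrac2\pi\le G(r,\theta)/r\le\tfrac2\pi\sinh(\pi/2)$, i.e.\ $G(r,\theta)/r\sim1$. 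I do not expect any genuine obstacle here: all the substance is already contained in Lemma \ref{riccaticomparison}, and the only points needing care are the positivity of $G$ on $(0,R)$ and the matching of the $r\to0$ normalizations, both of which are routine.
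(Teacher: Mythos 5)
Your proof is correct and follows essentially the same route as the paper's (apply Lemma \ref{riccaticomparison}, integrate the logarithmic derivative, and for the ``in particular'' clause evaluate $\Psi$ at $\pm\pi^2/4$ using its monotonicity); you simply make explicit two points the paper leaves implicit, namely the positivity of $G$ on $(0,R)$ and the matching of the $r\to0$ normalizations. One small caveat: in the parenthetical about $H>\pi^2/R^2$, the claim that the upper bound ``goes through unchanged as long as $G>0$'' is fine, but positivity of $G$ beyond $\pi/\sqrt H$ is not actually established there; as you note this regime never arises in the paper, so it does not affect the argument.
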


\begin{proof}
	The first assertion follows Lemma \ref{riccaticomparison} by integration, since $h = (\partial/\partial r)(\log G)$ and $\cot_{K}r = (d/dr)\log \sin_K r$. If $R \le \pi/2\sqrt{H}$ then
		\begin{equation*} \frac12 \le \Psi(HR^2) \le \Psi(Hr^2) = \frac{\sin_Hr}{r} \le \frac G r \le \frac{\sin_{-H}r}{r}  = \Psi(-Hr^2) \le \Psi(-\pi^2/4) \le 2.\end{equation*}
\end{proof}

\medskip \medskip A subset $U$ of a Riemannian manifold is said to be \emph{strongly convex} if every $p,q\in U$ are joined by a unique minimizing geodesic in $M$, which lies entirely in $U$. The following classical theorem (\cite{CE}, Theorem 5.14) asserts that every point has a strongly convex neighbourhood.
\begin{theorem}[Whitehead]\label{whiteheadthm}
    Let $(M,g)$ be a complete Riemannian manifold with sectional curvature $\le H$ and injectivity radius $\ge Z$. For every $R\le\frac12\min\{\pi/\sqrt{H},Z\}$ and every $p\in M$, the disc $B_R(p)$ is strongly convex.
\end{theorem}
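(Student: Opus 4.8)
This is a classical fact, and the plan is to follow the standard argument (see \cite{CE}): uniqueness will come from the injectivity radius bound, convexity from the Hessian comparison theorem, and the confinement of the connecting geodesic from a continuity argument in the radius.

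First, uniqueness. For $q_1,q_2\in B_R(p)$ one has $d(q_1,q_2)\le d(q_1,p)+d(p,q_2)<2R\le Z\le\mathrm{inj}(q_1)$, so $q_1$ and $q_2$ are joined by a unique minimizing geodesic $\sigma$, of length $<2R$.

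Next, convexity of $r^2/2$, where $r:=d(p,\cdot)$. Since $Z\ge 2R$, the function $r^2/2$ is $C^2$ on $B_{2R}(p)$, with $\mathrm{Hess}(r^2/2)|_p=g$. The Hessian comparison theorem for $\sec\le H$ (see \cite{petersen2006riemannian}; in dimension two it reduces to Lemma \ref{riccaticomparison}, since there $\mathrm{Hess}\,r=h\,(g-dr\otimes dr)$ with $h=\partial_rG/G$) gives
\[
	\mathrm{Hess}\,r\ \ge\ \sqrt H\cot(\sqrt H\,r)\,(g-dr\otimes dr)\qquad\text{on }(B_{\pi/\sqrt H}(p)\cap B_Z(p))\setminus\{p\},
\]
hence $\mathrm{Hess}(r^2/2)=r\,\mathrm{Hess}\,r+dr\otimes dr\ge\Phi(Hr^2)\,g$, using $0\le\Phi(Hr^2)=r\cot_H r\le 1$ for $0<r\le\pi/(2\sqrt H)$. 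As $\Phi(Hr^2)>0$ for $0\le r<\pi/(2\sqrt H)$, the function $r^2/2$ is strictly convex on $B_{\pi/(2\sqrt H)}(p)\supseteq B_R(p)$; in particular, along any geodesic that stays inside $B_R(p)$ the function $r$ attains its maximum at an endpoint.

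Finally, confinement: one must show $\sigma\subseteq B_R(p)$, and here I would argue by continuity in the radius. Let $A=\{\rho_0\in(0,R]:B_{\rho_0}(p)\text{ is strongly convex}\}$. Then $A\supseteq(0,\min\{\pi/(4\sqrt H),Z/2\}]$, since for such $\rho_0$ a minimizing geodesic between two points of $B_{\rho_0}(p)$ lies in $B_{2\rho_0}(p)\subseteq B_{\pi/(2\sqrt H)}(p)$, hence, by convexity of $r^2/2$ there, in $B_{\rho_0}(p)$; and $A$ is downward closed, since if $\rho_0\in A$ and $\rho_0'<\rho_0$, the minimizing geodesic between two points of $B_{\rho_0'}(p)$ lies in $B_{\rho_0}(p)\subseteq B_{\pi/(2\sqrt H)}(p)$, so along it $r$ is maximized at an endpoint and hence stays below $\rho_0'$. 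Thus $A=(0,\rho_*]$, and one concludes $\rho_*=R$ by ruling out a first bad radius $\rho_*<R$, using that the geodesic sphere $S_{\rho_*}(p)$ is then uniformly convex (inward second fundamental form $\ge\sqrt H\cot(\sqrt H\rho_*)>0$, since $\rho_*<\pi/(2\sqrt H)$) to propagate strong convexity of $B_{\rho_0}(p)$ across $\rho_0=\rho_*$, contradicting maximality. This confinement step is the main obstacle: the crude inclusion $\sigma\subseteq B_{2R}(p)$ does not by itself suffice, because $2R$ may exceed the convexity radius $\pi/(2\sqrt H)$, which is precisely why the hypothesis asks $R\le\frac12\min\{\pi/\sqrt H,Z\}$: the factor $\frac12$ places $B_R(p)$ within the convexity radius and every pair of its points within the injectivity radius at once.
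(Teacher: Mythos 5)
The paper does not actually prove Theorem~\ref{whiteheadthm}: it is quoted as a classical black-box result, with the citation to \cite{CE}, Theorem~5.14. So there is no ``paper's own proof'' to compare against; what follows is an assessment of your sketch on its own merits.

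Your outline is the standard Whitehead argument, and the three pieces (uniqueness from the injectivity-radius bound, convexity of $r^2/2$ from Hessian comparison, confinement by a continuity-in-radius argument) are the right ones. Your uniqueness and convexity steps are fine, and you correctly diagnose where the work really is: the crude bound $\sigma\subseteq B_{2R}(p)$ only places the connecting geodesic inside $B_{\pi/\sqrt H}(p)$, which is larger than the ball $B_{\pi/(2\sqrt H)}(p)$ on which $\mathrm{Hess}(r^2/2)>0$, so one cannot just invoke convexity along $\sigma$ directly. Your set $A$ of good radii is indeed downward-closed, and $\rho_*:=\sup A$ belongs to $A$ (the geodesic between two points of $B_{\rho_*}(p)$ lies in some $B_{\rho_0}(p)$ with $\rho_0<\rho_*$). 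But the final step as written is a gesture rather than a proof: saying ``the sphere $S_{\rho_*}(p)$ is uniformly convex, so one can propagate'' does not by itself explain why a slightly larger ball is strongly convex. The missing argument is a compactness/limiting one: if for a sequence $\eps_n\downarrow 0$ one could find $q_1^n,q_2^n\in B_{\rho_*+\eps_n}(p)$ whose unique geodesic $\sigma^n$ escapes $B_{\rho_*+\eps_n}(p)$, pass to a subsequential limit $(q_1,q_2)\in\overline{B_{\rho_*}(p)}^2$; the limit geodesic $\sigma$ lies in $\overline{B_{\rho_*}(p)}$ (by approximating the $q_i$ from inside $B_{\rho_*}(p)$), yet touches $S_{\rho_*}(p)$ with $\frac{d}{dt}(r\circ\sigma)=0$ at a limiting parameter $t_0$. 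Since $\rho_*<R\le\pi/(2\sqrt H)$, the Hessian comparison gives $\frac{d^2}{dt^2}(r\circ\sigma)(t_0)=\mathrm{Hess}\,r(\dot\sigma,\dot\sigma)>0$, so $r\circ\sigma$ has a \emph{strict local minimum} there (if $t_0$ is an endpoint, extend $\sigma$ slightly as a geodesic and argue the same way), contradicting $\sigma\subseteq\overline{B_{\rho_*}(p)}$. Your sketch contains all the right ingredients but leaves this last, nontrivial step implicit; since the paper itself treats the theorem as cited, that is an acceptable level of detail, but it is worth knowing exactly where the argument has to be completed.
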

\begin{corollary}\label{strongconvcor}
    In $C^{2,\alpha}$ Riemannian surfaces with constant $H$, open discs of radius $\le \pi/(2\sqrt{H})$ are strongly convex.
\end{corollary}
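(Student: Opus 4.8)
This is an immediate specialization of Whitehead's theorem (Theorem~\ref{whiteheadthm}) to the setting of $C^{2,\alpha}$ surfaces. The plan is simply to check that the two hypotheses of Theorem~\ref{whiteheadthm}---an upper sectional curvature bound and a lower injectivity radius bound---are exactly the quantities controlled by the constant $H$ in Definition~\ref{C2asurfacedef}.

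Concretely, let $(M,g)$ be a $C^{2,\alpha}$ Riemannian surface with constant $H$. On a surface the sectional curvature is just the Gauss curvature $K$, and Definition~\ref{C2asurfacedef} gives $\norm{K}_\infty \le H$, so in particular $K \le H$ everywhere; thus $M$ has sectional curvature $\le H$. The same definition gives $\mathrm{inj}(M) \ge \pi/\sqrt{H}$, so we may take $Z = \pi/\sqrt{H}$ in Theorem~\ref{whiteheadthm}. Then
\[
\tfrac12\min\{\pi/\sqrt{H},\,Z\} = \tfrac12\min\{\pi/\sqrt{H},\,\pi/\sqrt{H}\} = \frac{\pi}{2\sqrt{H}},
\]
so Whitehead's theorem yields that $B_R(p)$ is strongly convex for every $p \in M$ and every $R \le \pi/(2\sqrt{H})$, which is the assertion.

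There is essentially no obstacle here; the only point to be careful about is the (trivial) identification of sectional curvature with Gauss curvature in dimension two and the fact that completeness, which is part of Definition~\ref{C2asurfacedef}, is among the hypotheses of Theorem~\ref{whiteheadthm}.
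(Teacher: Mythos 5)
Your proof is correct and is exactly the (implicit) argument the paper has in mind: the corollary is stated immediately after Whitehead's theorem precisely because Definition~\ref{C2asurfacedef} supplies $K\le H$ and $\mathrm{inj}(M)\ge\pi/\sqrt H$, so that $\frac12\min\{\pi/\sqrt H,Z\}=\pi/(2\sqrt H)$. Nothing more is needed.
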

\noindent Combining this with Corollary \ref{Gcompcor}, one obtains
\begin{corollary}\label{metriccompcor}
    Let $(M,g)$ be a $C^{2,\alpha}$ Riemannian surface with constant $H$ and let $p\in M$. For every $v,w\in T_pM$ with $\max\{|v|,|w|\}\le \pi/2\sqrt H$,
    \begin{equation*}
        |v-w|_g\sim d_g(\exp_pv,\exp_pw)
    \end{equation*}
    where $|\cdot|_g:=\sqrt{\left<\cdot,\cdot\right>_g}$ is the Riemannian norm on $T_pM$.
\end{corollary}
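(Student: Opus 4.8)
The plan is to push the problem into the normal coordinate chart about $p$ and exploit the pointwise comparison between $g$ and the flat metric that Corollary \ref{Gcompcor} supplies. Set $R:=\pi/2\sqrt H$. Since $\mathrm{inj}(M)\ge \pi/\sqrt H>R$, the exponential map $\exp_p$ is a diffeomorphism of $B_{\pi/\sqrt H}(0)\subseteq T_pM$ onto $B_{\pi/\sqrt H}(p)$; I use it to identify $\overline{B_R(0)}$ with $\overline{B_R(p)}$, so that $v,w$ correspond to $\exp_pv,\exp_pw$. In these (Cartesian) normal coordinates the metric equals the identity at the origin, hence the Riemannian norm $|\cdot|_g$ on $T_pM$ is exactly the Euclidean norm $|\cdot|$ of the chart, and the flat metric $g_0:=dr^2+r^2d\theta^2$ in polar coordinates agrees with $g$ at the origin. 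Finally, by Corollary \ref{strongconvcor}, $B_R(p)$ is strongly convex.

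Next I would record the metric comparison. Writing $g=dr^2+G^2d\theta^2$ in polar normal coordinates, Corollary \ref{Gcompcor} — with the constant made explicit exactly as in its proof — gives $\tfrac12\le G/r\le 2$, hence $\tfrac14 r^2\le G^2\le 4r^2$, on the punctured disc $0<r\le R$. Therefore $\tfrac14 g_0\le g\le 4g_0$ as quadratic forms on $\overline{B_R(0)}$ (the bound extends to the origin by continuity, both forms being Euclidean there), and consequently $\tfrac12\Len_{g_0}(c)\le \Len_g(c)\le 2\Len_{g_0}(c)$ for every curve $c$ contained in $\overline{B_R(0)}$.

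For the upper bound, since $|v|,|w|\le R$ the Euclidean segment $[v,w]$ lies in the convex set $\overline{B_R(0)}$, so $d_g(\exp_pv,\exp_pw)\le \Len_g([v,w])\le 2\Len_{g_0}([v,w])=2|v-w|=2|v-w|_g$. For the lower bound, strong convexity of $B_R(p)$ furnishes a minimizing $g$-geodesic $\sigma$ from $\exp_pv$ to $\exp_pw$ lying in $\overline{B_R(p)}$; viewing $\sigma$ in the chart, $d_g(\exp_pv,\exp_pw)=\Len_g(\sigma)\ge \tfrac12\Len_{g_0}(\sigma)\ge \tfrac12|v-w|=\tfrac12|v-w|_g$, the last step being merely that the Euclidean length of any path joining $v$ and $w$ is at least $|v-w|$. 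Combining the two bounds gives $\tfrac12|v-w|_g\le d_g(\exp_pv,\exp_pw)\le 2|v-w|_g$, which is the claim.

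I do not expect a real obstacle here; the only points requiring care are (i) making sure that each competitor curve — the Euclidean segment for the upper bound, the minimizing geodesic for the lower bound — stays inside the region $\overline{B_R(0)}$ on which $\tfrac14 g_0\le g\le 4g_0$ holds, which is handled by Euclidean convexity of balls and by Corollary \ref{strongconvcor} respectively, and (ii) the mild degeneration of polar coordinates at the origin, which is harmless since the comparison of quadratic forms persists there by continuity (and in any case $v=0$ or $w=0$ is immediate, radial geodesics being length-minimizing inside a normal disc). A routine limiting argument, or just continuity of $d_g$ and $|\cdot|_g$ in $v,w$, covers the boundary sphere $|v|=R$ where strong convexity of the open ball does not literally apply.
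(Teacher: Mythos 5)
Your argument is correct and is exactly the route the paper intends: the paper states Corollary \ref{metriccompcor} as an immediate consequence of Corollary \ref{Gcompcor} (giving $\tfrac12 g_0\le g\le 4g_0$ via $G/r\sim1$, where $g_0$ is the flat metric in the chart) and Corollary \ref{strongconvcor} (keeping the minimizing geodesic inside the normal disc), which is precisely the two-sided comparison you carried out. The small boundary-sphere issue you flagged is genuine but is handled exactly as you suggest, by a limiting argument.
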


\medskip It is useful to have a criterion for strong convexity of a disc without reference to a complete surface containing it.

\begin{lemma}\label{strongconvlma}
	Let $R, H>0$ satisfy $HR^2 \le  \pi^2 / 4$. Let $g = dr^2 + G^2 d\theta^2$ be a $C^{2,\alpha}$ metric with constant $H$ on $D := B_R(0) \subseteq \RR^2$, given in polar normal coordinates centered at the origin. Then $(D,g)$ is strongly convex.
\end{lemma}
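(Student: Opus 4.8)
The plan is to argue directly from the coordinate description rather than through a complete ambient surface; the point is that on $D$ the function $r$ is convex along geodesics. First I would record the key monotonicity. Since $HR^2\le\pi^2/4$, Lemma~\ref{riccaticomparison} along each radial ray gives
\[
h:=\frac{\partial_r G}{G}\ \ge\ \cot_H r=\sqrt H\cot(\sqrt H r)\ \ge\ 0 \qquad\text{on }D\setminus\{0\},
\]
since $\sqrt H r\le\pi/2$; in particular $G$ is nondecreasing in $r$. For a unit-speed $g$-geodesic $\gamma=(r(\cdot),\theta(\cdot))$ avoiding the origin, the $r$-component of the geodesic equation in the coordinates $(r,\theta)$ is $\ddot r=G(\partial_r G)\dot\theta^2=h\,(1-\dot r^2)\ge 0$, and near a point where $\gamma$ passes through $0$ one checks in Cartesian normal coordinates (where $\gamma$ is affine) that $r^2$ remains strictly convex. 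Hence $r$ is convex, and $r^2$ strictly convex, along every $g$-geodesic. Two consequences I would use repeatedly: any geodesic with both endpoints in $\overline{B_\rho(0)}$ stays in $\overline{B_\rho(0)}$, and for $\rho<R$ the circle $\{r=\rho\}$ is \emph{strictly} convex, because a geodesic tangent to it has $\ddot r=h>0$.

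For existence, fix $p,q$, set $\rho_*:=\max\{d_g(0,p),d_g(0,q)\}<R$, and choose $\rho\in(\rho_*,R)$. The radial retraction $(r,\theta)\mapsto(\min\{r,\rho\},\theta)$ does not increase the length of a curve lying in $\{r\ge\rho\}$ (this is where $G$ nondecreasing in $r$ enters), and a curve through $0$ is no shorter than the broken radial path of length $d_g(0,p)+d_g(0,q)$, which lies in $\overline{B_\rho(0)}$; so $d_g(p,q)$ equals the intrinsic distance of the compact length space $(\overline{B_\rho(0)},g)$. A shortest curve $\sigma$ there exists, and strict convexity of $\{r=\rho\}$ forces it into the open disc, so $\sigma$ is a $g$-geodesic realising $d_g(p,q)$; convexity of $r$ gives $\sigma\subseteq\overline{B_{\rho_*}(0)}$. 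The same retraction argument shows $d_g$ on $B_R(0)$ and on $B_{R'}(0)$ agree for pairs inside $B_{R'}(0)$, so it suffices to treat $HR^2<\pi^2/4$ (the boundary case follows by letting $R'\uparrow R$).

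Uniqueness is the crux, and I would prove it using $u_p:=\cos\!\big(\sqrt H\,d_g(p,\cdot)\big)$. Because $d_g(p,\cdot)<2R<\pi/\sqrt H$ on $D$, Lemma~\ref{riccaticomparison} applied to the shape operator of geodesic spheres about $p$ (i.e.\ the Hessian comparison for $\sec=K\le H$) yields $\mathrm{Hess}\,u_p\le -H\,u_p\,g$ wherever $u_p$ is smooth. Now suppose some pair is joined by two distinct minimizers. By a Klingenberg-type argument, localized to a compact sub-disc $\overline{B_{\rho_0}(0)}$ ($\rho_0<R$) using the convexity above, one may assume $p,q$ are a minimal such pair, joined by distinct minimizers $\sigma_1,\sigma_2:[0,L]\to D$ ($L=d_g(p,q)$) meeting only at the endpoints with $\dot\sigma_1(L)=-\dot\sigma_2(L)$, and that the midpoints $m_i:=\sigma_i(L/2)$ satisfy $\ell:=d_g(m_1,m_2)<L$: indeed $\ell=L$ would make $\sigma_1\cup\overline{\sigma_2}$ a smooth closed geodesic, hence (convexity of $r$, which is then periodic) a distance circle, impossible since $h>0$. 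Then the minimizing geodesic $\tau$ from $m_1$ to $m_2$ stays within distance $<L$ of both $p$ and $q$, hence avoids their cut loci, so $u_p,u_q$ are smooth along $\tau$. With $c:=\cos(\sqrt H\,L/2)=u_p(m_1)=u_p(m_2)$, the function $v:=u_p\circ\tau$ satisfies $\ddot v+Hv\le 0$ on $[0,\ell]$ with $v(0)=v(\ell)=c$; since $\ell<\pi/\sqrt H$, comparison with the solution of $\ddot W+HW=0$, $W(0)=W(\ell)=c$, gives $v\ge W$, so
\[
u_p\big(\tau(\ell/2)\big)\ \ge\ W(\ell/2)=\frac{c}{\cos(\sqrt H\,\ell/2)}=:c',
\]
and likewise $u_q(\tau(\ell/2))\ge c'$. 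Hence $d_g(p,\tau(\ell/2))\le \tfrac1{\sqrt H}\arccos c'$ and similarly for $q$, so $L\le\tfrac2{\sqrt H}\arccos c'$, i.e.\ $c'\le\cos(\sqrt H\,L/2)=c$; but $\ell>0$ forces $c'=c/\cos(\sqrt H\,\ell/2)>c$ — a contradiction.

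I expect the Klingenberg-type reduction to be the most delicate point: attainment of the relevant infimum and the structure of the minimal configuration must be argued in this noncompact disc, which is why I localize to a compact sub-disc and use the convexity of $r$ to confine everything there. (The argument is set up so that $\tau$ avoids the cut loci, so only classical smoothness of $d_g(p,\cdot)$ is used, not barrier versions of the Hessian inequality.) It also seems worth noting that the alternative route — isometrically extending a slightly smaller sub-disc to a complete $C^{2,\alpha}$ surface and quoting Corollary~\ref{strongconvcor} — appears to lose a dimensionless constant in the curvature bound (either through Theorem~\ref{Gthm}, or through the gluing of $K$ near $r=R$, whose $\alpha$-H{\"o}lder seminorm forces the transition to occur over a scale $\gtrsim H^{-1/2}$), which is incompatible with recovering the sharp radius $\pi/2\sqrt H$; so a direct argument of the above kind seems necessary.
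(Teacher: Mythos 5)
Your proof is correct in outline but takes a genuinely different route from the paper's, most notably in the uniqueness step. For existence, both arguments exploit that $h=\partial_rG/G>0$ (Riccati comparison) forces $G$ to increase in $r$; the paper bounds $\Len(\gamma)$ from below by the length of the radial projection plus a positive excursion penalty via $\sqrt{a^2+b^2}\ge|a|+b^2/2\sqrt{a^2+b^2}$, while you phrase the same mechanism as $1$-Lipschitzness of the retraction $(r,\theta)\mapsto(\min\{r,\rho\},\theta)$; these are essentially equivalent. For uniqueness, both proofs begin with a Klingenberg-type reduction (no conjugate points since $HR^2\le\pi^2/4$) to a geodesic loop, but then diverge sharply. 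The paper observes that along a geodesic loop at $p$ the coordinate $\theta$ (centered at $0$, not $p$) must have a critical point, forcing the geodesic to be radial, which is immediately absurd -- a very short argument, though it quietly uses that the loop does not wind around the origin. You instead run a two-step argument: first, convexity of $r$ along geodesics rules out a smooth closed geodesic (a convex periodic function is constant, so it would be a distance circle, whose geodesic curvature $h>0$ is nonzero), which gives $\ell=d_g(m_1,m_2)<L$; then you derive a contradiction from $\mathrm{Hess}\,\cos(\sqrt H\,d_g(p,\cdot))\le-H\cos(\sqrt H\,d_g(p,\cdot))\,g$ via an ODE comparison along the midpoint geodesic $\tau$. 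Your route is longer and relies on the Hessian comparison theorem (which needs some justification in this low-regularity, incomplete setting, though it is standard), but it buys robustness: the contradiction is metric rather than coordinate-topological, and the sharp radius is preserved with no loss of constants. You are right to flag the compactness/attainment step in the Klingenberg reduction as the delicate point and to localize it to a compact sub-disc via convexity of $r$; that is precisely where the details need to be filled in (closedness of the set of cut pairs and ruling out the infimum being zero, both of which reduce to the absence of conjugate points). Your final remark -- that routing through Theorem~\ref{Gthm} or through $C^{2,\alpha}$ extension and Corollary~\ref{strongconvcor} would lose a universal factor and thus fail to reach the sharp radius $\pi/2\sqrt H$, besides risking circularity -- is a sound observation and matches why the paper also argues Lemma~\ref{strongconvlma} directly rather than via the later machinery.
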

\begin{proof}
	Let $R_0 < R$ and let $\gamma : [0,1] \to D$ be a constant - speed curve of whose endpoints lie on $\partial B_{R_0}(0)$ and which lies outside $B_{R_0}(0)$. Write $\gamma$ in polar coordinates as $\gamma = \rho e ^ {i \phi}$.  By the inequality $\sqrt{a^2 + b^2} \ge |a| + b^2 / 2 \sqrt{a^2 + b^2}$ (equality iff $b = 0$), we have
	\begin{align*}
		\Len (\gamma) & = \int_0^1 \sqrt{\dot\rho ^ 2 + G (\rho,\phi) ^ 2  \dot\phi^2 } \ge \int_0^1 \left| G(\rho , \phi) \dot \phi \right| + \frac{\dot\rho^2}{2 \sqrt{\dot\rho ^ 2 + G (\rho,\phi) ^ 2  \dot\phi^2}}\\
		& = \int_0^1 \left| G(\rho , \phi) \dot \phi \right| + \dot\rho^2 / 2\Len(\gamma).
	\end{align*}
	By Lemma \ref{riccaticomparison}, the function $h = \partial_rG/G$ is positive, so $G$ and increasing in $r$ (and positive by Corollary \ref{Gcompcor}), and therefore
	\begin{equation*}
		\int_0^1 \left| G(\rho , \phi) \dot \phi \right| \ge \int_0^1 \left| G(R_0 , \phi) \dot \phi \right| = \Len(\gamma_0),
	\end{equation*}
	where $\gamma_0 : = R_0 e ^ {i \phi}$ is the radial projection of $\gamma$ to the circle $\partial B_{R_0}(0)$. Therefore
	\begin{equation*}
		\Len(\gamma) \ge d(p,q) + \frac {1}{2\Len(\gamma)} \int_0^1\dot\rho^2 \ge d(p,q) + \frac{1}{ 2 \Len(\gamma)}(\max\rho - \min\rho)^2.
	\end{equation*}
	
	From this calculation we conclude that if two points $p,q$ are contained in a disc of radius $R_0 < R$, and $\gamma$ is a curve joining $p$ to $q$ which leaves the disc $B_{R_0+\eps}(0)$, for some $\eps>0$, then the length of $\gamma$ is larger than $d(p,q)$ by a positive constant depending on $\eps$ alone. It follows that any sequence of curves $\gamma_n$ joining $p$ to $q$ whose length tends to $d(p,q)$, remains in a compact subset of $D_R(0)$ and so has a subsequence which converges to a length - minimizing curve, which must be a geodesic (see \cite{BI}).

	It remain to prove the uniqueness of the minimizing geodesic joining two points. Suppose by contradiction that $p,q \in D$ are joined by two different minimizing geodesics. Then, in the region bounded between the two geodesics, there will be a point $q'$ which is a cut point of $p$ of minimal distance to $p$. By the assumption $HR^2 \le \pi^2 / 4$, the metric $(D,g)$ has no conjugate points, so the argument in \cite{CE} implies that there is a smooth geodesic $\gamma: [0,T] \to D$ with $\gamma(0) = \gamma(T) = p$. The $\theta$ coordinate of $\gamma$ must attain a local extremum at a point $t \in (0,T)$, whence $(\theta \circ \gamma)'(t) = 0$. But then, by uniquess of solutions to the geodesic equation, $\gamma$ must be a radial geodesic-  a contradiction.
\end{proof}

\medskip In the proof of Theorem \ref{mainthm}, we will construct a metric $g$ on a disc $D \subseteq \RR^2$, rather than a complete surface, but the following lemma guarantees that this is not an issue.

\begin{lemma}\label{metricextlma}
	Let $R, H>0$ satisfy $HR^2 \le  \pi^2 / 4$. Let $dr^2 + G^2 d\theta^2$ be a $C^{2,\alpha}$ metric with constant $H$ on $D := B_R(0) \subseteq \RR^2$. Then there exists a $C^{2,\alpha}$ surface $(M,g)$ with constant $H$ and a point $p \in M$ such that $g = dr^2 + G^2 d\theta^2$ in polar normal coordinates on a strongly convex disc $B_R(p) \subseteq M$.
\end{lemma}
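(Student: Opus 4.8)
The plan is to extend the metric $dr^2 + G^2 d\theta^2$ from the disc $D = B_R(0)$ to a complete surface by gluing on a suitable ``cap'' or ``collar'' outside $B_R(0)$, taking care that the extended curvature $K = -\partial_r^2 G/G$ remains $C^\alpha$, stays bounded by $H$ in sup norm, has $\alpha$-Hölder seminorm bounded by $H^{1+\alpha/2}$, and that the resulting complete surface has injectivity radius at least $\pi/\sqrt H$. Since a metric of the form $dr^2 + G^2 d\theta^2$ is globally determined by the single function $G$ on $(0,\infty) \times S^1$ (with the boundary behaviour \eqref{Ginitialcond} at $r=0$), the whole construction reduces to extending $G(\cdot,\theta)$, for each fixed $\theta$, from $(0,R]$ to $(0,\infty)$ in a $C^2$ fashion, subject to the above constraints on $K$.

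First I would work on the level of the function $h = \partial_r G/G$, which by Lemma \ref{riccaticomparison} satisfies $\cot_H r \le h(R,\theta) \le \cot_{-H} r$ at $r = R$; the key point is that $HR^2 \le \pi^2/4$ keeps us strictly below the first conjugate value $r = \pi/\sqrt H$, so $h(R,\theta)$ is a bounded positive quantity. For $r \ge R$ I would prescribe a new curvature function $\tilde K(r,\theta)$ which equals $K(r,\theta)$ for $r \le R$, is glued in a $C^\alpha$ way across $r = R$, and is chosen so that the solution of the Riccati equation $\partial_r h + h^2 + \tilde K = 0$ continued past $r = R$ eventually becomes that of a constant-curvature model (say curvature $-H$, or even $0$), whose $G$ is $\sin_{-H} r$ and which is complete with infinite injectivity radius. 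Concretely: on a short interval $[R, R + \delta]$ bend $\tilde K$ down to the constant value $-H$ (this only decreases the sup norm and can be done with Hölder seminorm $\lesssim H^{1+\alpha/2}$ by spreading the change over a length $\sim 1/\sqrt H$, using the scaling $\norm{\tilde K}_\alpha \le \delta^{-\alpha}\norm{\tilde K}_\infty \lesssim \delta^{-\alpha} H$ and choosing $\delta \sim 1/\sqrt H$), then keep $\tilde K \equiv -H$ for all larger $r$. One must check that the Riccati solution does not blow up on $[R, R+\delta]$: since $\tilde K \ge -H$ throughout, the lower comparison $h(r) \ge \cot_H r$ fails to apply directly past the first root, but the upper bound $h \le \cot_{-H} r$ together with $h(R,\theta) > 0$ and $\tilde K \ge -H$ keeps $h$ finite and positive on $[R,\infty)$ by the comparison principle; then $G = G(R,\theta)\exp\!\big(\int_R^r h\big)$ is well-defined, positive, and grows at most like $\sinh(\sqrt H \, r)$, so no singularity forms.

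Once $G$ is extended to all of $(0,\infty) \times S^1$, the surface $M$ is $\mathbb R^2$ (topologically a plane) with the metric $g = dr^2 + G^2 d\theta^2$; completeness is immediate because radial lines have infinite length and $G$ is bounded below away from $0$ for $r$ bounded away from $0$, while \eqref{Ginitialcond} ensures the origin is a smooth point. The curvature bounds $\norm{\tilde K}_\infty \le H$ and $\norm{\tilde K}_\alpha \le H^{1+\alpha/2}$ hold by construction (the extension region only matches a constant-curvature model of curvature $-H$, and the Hölder seminorm is controlled in the transition region as above; one checks the seminorm with respect to the Riemannian distance, which is comparable to the coordinate distance via Corollary \ref{metriccompcor} on the relevant scale). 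For the injectivity radius: since $\tilde K \le H$ everywhere there are no conjugate points before radius $\pi/\sqrt H$ along any geodesic, and since the extended surface is a topological plane which is $(-H)$-curved outside a compact set, it is simply connected and nonpositively curved at infinity, so by the Cartan–Hadamard-type argument (or directly, Theorem \ref{whiteheadthm} applied locally) $\mathrm{inj}(M) \ge \pi/\sqrt H$. Finally, by Lemma \ref{strongconvlma} (or Corollary \ref{strongconvcor}) the disc $B_R(p)$ with $HR^2 \le \pi^2/4$ is strongly convex in $M$, and by construction the polar normal coordinates of $g$ at $p = \{r=0\}$ restrict to the original metric on $B_R(0)$.

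The main obstacle I expect is the simultaneous control of \emph{all three} constraints during the transition $r \in [R, R+\delta]$: bounding the Hölder seminorm of $\tilde K$ by $H^{1+\alpha/2}$ forces the transition length $\delta$ to be of order $H^{-1/2}$, but a transition that long risks letting the Riccati solution $h$ run into a zero or a pole before the curvature has settled to $-H$, which would destroy both positivity of $G$ and the injectivity radius bound. Handling this requires choosing the interpolation profile of $\tilde K$ carefully (monotone, staying in $[-H, H]$) and invoking the comparison principle on sub-intervals to certify that $h$ stays in a fixed bounded positive window throughout $[R,\infty)$; this is where the hypothesis $HR^2 \le \pi^2/4$ (equivalently $R \le \tfrac12\,\pi/\sqrt H$, exactly the strong-convexity threshold) is used in an essential way, since it guarantees $h(R,\theta)$ is bounded away from $0$ and $\infty$ to begin with.
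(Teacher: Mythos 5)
Your approach is essentially the same as the paper's: extend the curvature $K$ to an $\alpha$-H\"older function that becomes constant outside a slightly larger disc (the transition length $\sim H^{-1/2}$ is exactly what the H\"older scaling forces, as you observe), integrate the Jacobi/Riccati equation radially to extend $G$, and close off to a complete surface. The paper states that one glues the enlarged disc $(D',g')$ to a surface of constant curvature, while you extend $G$ all the way to $r=\infty$ within a single polar chart; these are minor variations of the same idea.

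Two points where your sketch is weaker than it needs to be. First, working with the Riccati equation for $h$ invites unnecessary worry about $h$ hitting a zero or a pole on the transition band. It is cleaner to run Sturm comparison on the Jacobi equation for $G$ directly: from $G(R,\theta)>0$, $\partial_rG(R,\theta)\ge 0$ (the latter from Lemma \ref{riccaticomparison} together with $HR^2\le\pi^2/4$), and $\tilde K\le H$, one gets $G(r,\theta)\ge G(R,\theta)\cos(\sqrt H\,(r-R))>0$ for $r-R<\pi/(2\sqrt H)$, which is more than enough room for a transition of length $\delta\sim H^{-1/2}$ chosen with $\delta<\pi/(2\sqrt H)$; after the transition $\tilde K\le 0$ and positivity of $G$ propagates for all larger $r$. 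Second, your injectivity radius argument has a genuine gap: neither Cartan--Hadamard (which needs $K\le 0$ everywhere, not just outside a compact set) nor Theorem \ref{whiteheadthm} (which \emph{assumes} a lower bound on the injectivity radius) gives $\mathrm{inj}(M)\ge\pi/\sqrt H$ for a topological plane with $K\le H$. One needs a Klingenberg-type argument: since $\exp_0$ is a global diffeomorphism, $\mathrm{inj}(0)=\infty$, and for $p\ne 0$ a short geodesic loop would bound a disc of total curvature at least $\pi$ by Gauss--Bonnet; ruling this out, or arranging the construction so that the glued model is a round sphere (where the topology immediately constrains short loops), requires quantitative care that your sketch does not supply. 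Finally, note that with a transition of length $\sim H^{-1/2}$ the extended curvature typically satisfies $\norm{\tilde K}_\alpha\lesssim H^{1+\alpha/2}$ with a universal implicit constant rather than $\le H^{1+\alpha/2}$ exactly; the paper's own uses of this lemma only need the constant up to a universal factor, so this is consistent, but it means the stated conclusion ``with constant $H$'' should really be read as ``with constant $CH$.''
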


 The straightforward proof of Lemma \ref{metricextlma} is omitted. The idea is to extend the Gauss curvature function $K : D \to \RR$ to an $\alpha$-H{\"o}lder function which is constant outside some larger disc $D'$, then solve the Jacobi equation \eqref{jacobieq} along lines through the origin, thus obtaining an extension of $g$ to a metric $g'$ on $D'$, and finally glue the disc $(D',g')$ to a surface of constant curvature. This procedure is carried out in detail in \cite{Thesis}, Section 2.6.

\medskip The next lemma, regarding stability of solutions to the Ricatti equation, will play a key role.

\begin{lemma}\label{riccatilemma}
    Let $h_i:(0,R]\to\RR$ \textup{(}$i=1,2$\textup{)} be solutions to the Riccati equation:
    \begin{equation}\label{lemmariccati}
        h_i'+h_i^2+K_i=0
    \end{equation}
    \begin{equation}\label{conditionlemma}
        h_i(x)=1/x+O(x) \ \text{ as } x\to 0 \quad \quad i=1,2
    \end{equation}
    where $K_i:(0,R]\to\RR$ are functions satisfying:
    \begin{enumerate}
        \item $\norm{K_i}_\infty\le H$
        \item $\norm{K_i}_\alpha\le L$
        \item $\norm{K_1-K_2}_\infty\le T$
    \end{enumerate}
    for constants $H,L,R,T>0$ and $0<\alpha\le 1$. Assume $HR^2\le \pi^2/4$. Then, for every $r>0$, the function $f:=h_1-h_2$ has the following properties on the interval $[r,R]$:
    \setlength\columnsep{20pt}
    \begin{multicols}{2}
    \begin{enumerate}[\textup{(}a\textup{)}]
        \item $\norm{f}_\infty\lesssim TR$
        \item $\norm{f'}_\infty\lesssim T$
        \item $\norm{f}_\alpha\lesssim TR^{1-\alpha}$
        \item $\norm{f'}_\alpha\lesssim L+TR^{-\alpha}(1+R/r)$
    \end{enumerate}
    \end{multicols}
\end{lemma}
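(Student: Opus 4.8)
The plan is to bootstrap through the four estimates (a)--(d) in order, using the Riccati equation and Gr\"onwall-type arguments, exploiting that $f = h_1 - h_2$ satisfies a linear ODE with a source controlled by $T$. First I would subtract the two Riccati equations \eqref{lemmariccati} to obtain
\[
    f' + (h_1 + h_2) f + (K_1 - K_2) = 0,
\]
and note that by \eqref{conditionlemma} the singular terms cancel, so $f(x) \to 0$ as $x \to 0$; this identifies $f$ as the solution of a genuine (non-singular at $0$) linear first-order ODE. The coefficient $a := h_1 + h_2$ is, by the comparison Lemma \ref{riccaticomparison} together with the assumption $HR^2 \le \pi^2/4$, bounded above by $2\cot_{-H}x = 2\sqrt{H}\coth(\sqrt{H}x) \lesssim 1/x$ near $0$ and bounded by an absolute multiple of $1/R$... more precisely $a(x) \sim 1/x$ uniformly. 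Crucially $a > 0$ (both $h_i$ are positive by Lemma \ref{riccaticomparison}), so the integrating factor $\mu(x) := \exp\!\left(\int_{x_0}^x a\right)$ is increasing and, because $a(x) \le \cot_{-H} x + \cot_{-H} x$ behaves like the logarithmic derivative of $\sin_{-H}$ squared, one gets $\mu(y)/\mu(x) \sim (y/x)^2$ for $x \le y$ (up to universal constants, using $HR^2 \le \pi^2/4$). Then
\[
    f(x) = -\frac{1}{\mu(x)}\int_0^x \mu(y)\,(K_1 - K_2)(y)\,dy,
\]
and $|K_1 - K_2| \le T$ gives $|f(x)| \lesssim \frac{1}{\mu(x)} \int_0^x \mu(y)\,dy \cdot T \lesssim x T \le RT$, which is (a). Estimate (b) is then immediate: from the ODE $|f'| \le a|f| + |K_1 - K_2| \lesssim \frac1x \cdot xT + T \lesssim T$, valid on all of $(0,R]$, hence on $[r,R]$.

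For (c), I would interpolate: $f$ is bounded by $TR$ in sup-norm and has derivative bounded by $T$ (estimate (b)), so on an interval of diameter $R$ one has $\norm{f}_\alpha \le (\diam)^{1-\alpha}\norm{f'}_\infty \lesssim R^{1-\alpha} T$ — actually more directly $|f(x) - f(y)| \le \min\{2TR, T|x-y|\} \le (2TR)^{1-\alpha}(T|x-y|)^\alpha \cdot(\text{const})$... the cleanest is simply $|f(x)-f(y)| \le \norm{f'}_\infty|x-y| = \norm{f'}_\infty |x-y|^{1-\alpha}|x-y|^\alpha \le \norm{f'}_\infty R^{1-\alpha}|x-y|^\alpha$, giving (c). Estimate (d) is the main obstacle and the only one that truly needs all three hypotheses on the $K_i$: here I differentiate the ODE once more, $f'' = -(a f)' - (K_1-K_2)' = -a' f - a f' - (K_1 - K_2)'$, and I must control $\norm{f''}_\infty$ or directly the H\"older seminorm $\norm{f'}_\alpha$. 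The term $(K_1-K_2)'$ does not exist pointwise in general, so instead of differentiating $K_1 - K_2$ I would estimate $\norm{f'}_\alpha$ by writing $f'(x) = -a(x)f(x) - (K_1 - K_2)(x)$ and using \eqref{holderprops}: $\norm{f'}_\alpha \le \norm{a}_\alpha \norm{f}_\infty + \norm{a}_\infty \norm{f}_\alpha + \norm{K_1 - K_2}_\alpha$. Now $\norm{K_1 - K_2}_\alpha \le 2L$ by hypothesis (2) (or $\le \min\{2L, \ldots\}$, in any case $\lesssim L$); $\norm{a}_\infty \lesssim 1/r$ on $[r,R]$ and $\norm{f}_\alpha \lesssim TR^{1-\alpha}$ by (c), contributing $\lesssim TR^{1-\alpha}/r$; and $\norm{a}_\alpha \norm{f}_\infty$: since $a = h_1 + h_2$ and each $h_i$ satisfies Riccati, $h_i' = -h_i^2 - K_i$ is bounded on $[r,R]$ by $\lesssim 1/r^2 + H$, so $\norm{a}_\alpha \le \norm{a'}_\infty R^{1-\alpha} \lesssim (1/r^2) R^{1-\alpha}$, and multiplying by $\norm{f}_\infty \lesssim TR$ gives $\lesssim TR^{2-\alpha}/r^2$. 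Hmm — this overshoots the claimed $L + TR^{-\alpha}(1+R/r)$; I need a sharper bound on $\norm{a}_\alpha$.

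The fix, and the genuinely delicate point, is that one should not estimate $\norm{a}_\alpha$ crudely near $x = r$ but rather observe that $a(x) - \cot_{-H}x - \cot_H x$... no — the right move is to split the source differently. Rewrite the ODE for $f$ as $f' = -af + (K_2 - K_1)$ and bound $\norm{f'}_\alpha$ by estimating the two summands on $[r,R]$ using the product rule \eqref{holderprops} but with a \emph{weighted} H\"older seminorm, or equivalently: since $f$ itself, by (a) and (b), satisfies $|f(x)| \lesssim \min\{xT, RT\}$ and $|f'(x)| \lesssim T$ uniformly, the product $af$ is bounded by $\lesssim T$ with $(af)' = a'f + af'$; here $|a'f| \lesssim (1/x^2 + H)(xT) = T/x + HxT \lesssim T/r$ on $[r,R]$ and $|af'| \lesssim T/x \lesssim T/r$, so $\norm{(af)'}_\infty \lesssim T/r + HRT \lesssim T R^{-1}(1 + R/r)$ after using $HR^2 \lesssim 1$... then $\norm{af}_\alpha \le \norm{(af)'}_\infty R^{1-\alpha} \lesssim TR^{-\alpha}(1+R/r)$. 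Combined with $\norm{K_1-K_2}_\alpha \le 2L \lesssim L$, this yields $\norm{f'}_\alpha \le \norm{af}_\alpha + \norm{K_1-K_2}_\alpha \lesssim L + TR^{-\alpha}(1+R/r)$, which is exactly (d). The only subtlety to double-check is that all the comparison bounds ($h_i > 0$, $h_i(x) \sim 1/x$, $|h_i'| \lesssim 1/x^2$, $\mu(y)/\mu(x) \lesssim (y/x)^2$) hold on all of $(0,R]$ under $HR^2 \le \pi^2/4$ — this is where that hypothesis is used — and that the implied constants are universal (depending on nothing, since $H R^2$ is bounded and the functions $\Phi, \Psi$ are analytic on the relevant range). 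I expect writing out the sharp bound on the integrating factor $\mu$ and the uniform-in-$x$ control of $|f(x)| \lesssim \min\{xT, RT\}$ to be where the real work lies; everything after that is bookkeeping with \eqref{holderprops}.
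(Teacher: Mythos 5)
Your proposal is correct, and it follows the paper's overall strategy (subtract the Riccati equations to get a first-order linear ODE for $f$, use the comparison lemma to control $h_1+h_2$, then bootstrap (a)$\Rightarrow$(b)$\Rightarrow$(c)$\Rightarrow$(d)), but it differs at two points in a way worth noting. First, for (a) you solve the ODE by variation of parameters with the integrating factor $\mu(x)=\exp\int a$, observing $\mu(y)/\mu(x)\sim(y/x)^2$; the paper instead substitutes $g(x):=x^2f(x)$ and runs a Gr\"onwall argument for $g$. These are essentially the same device (the substitution $g=x^2f$ is just a hard-coded model integrating factor), and both yield the crucial pointwise bound $|f(x)|\lesssim Tx$, not merely $\norm{f}_\infty\lesssim TR$. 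Second, and more substantively, for (d) you keep the term $af$ intact, bound $\norm{(af)'}_\infty\lesssim T/r+HRT\lesssim TR^{-1}(1+R/r)$ using $|f(x)|\lesssim Tx$ and $|a'(x)|\lesssim 1/x^2+H$, and then interpolate to get $\norm{af}_\alpha\lesssim TR^{-\alpha}(1+R/r)$; the paper instead splits $f'=-f\bigl(h_1+h_2-\tfrac2x\bigr)-\tfrac{2f}{x}+(K_2-K_1)$ and applies the H\"older product rule term by term, feeding (a) and (c) (applied recursively to the pairs $h_i,\,1/x$ with $T$ replaced by $H$) into the first term. Your route is tighter: the paper's chain for $\norm{f/x}_\alpha$ writes $TR\cdot r^{-1-\alpha}+TR^{1-\alpha}r^{-1}\lesssim TR^{1-\alpha}r^{-1}$, which as stated is backwards (the first summand dominates); the claimed bound is nevertheless true, but only because of the sharper pointwise estimate $|f(x)|\lesssim Tx$, which is exactly what your argument uses explicitly when writing $|a'f|\lesssim(1/x^2+H)\cdot xT$. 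So your proof is not only correct, it is the cleaner way to carry out step (d). One stylistic caution: when you write $\mu(x):=\exp\int_{x_0}^x a$ and then $f(x)=-\mu(x)^{-1}\int_0^x\mu\,(K_1-K_2)$, you are implicitly letting the base point tend to $0$; you should say explicitly that $f(x_0)\mu(x_0)\to 0$ as $x_0\to 0$, which follows since $f(x_0)=O(x_0)$ by \eqref{conditionlemma} and $\mu(x_0)\to 0$ like $x_0^2$.
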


\begin{proof}

    By \eqref{lemmariccati}, we have:
    \begin{equation}\label{f'lemma}
    \begin{split}
        f'(x) & =h_1'(x)-h_2'(x) = h_2(x)^2-h_1(x)^2+K_2(x)-K_1(x)=\\
        & = (h_2(x)-h_1(x))(h_2(x)+h_1(x))+K_2(x)-K_1(x)=\\
        & = -f(x)\left(h_2(x)+h_1(x)-\frac{2}{x}\right)-\frac{2f(x)}{x}+K_2(x)-K_1(x).
    \end{split}
    \end{equation}
    Set $g(x):=x^2f(x)$ for $0<x\le R$ and $g(0):=0$; then $g$ is $C^1$ on $[0,R]$ by \eqref{conditionlemma}, and \eqref{f'lemma} imply:
    \begin{equation}\label{g'lemma}
    \begin{split}
        g'(x) & =2xf(x)+x^2f'(x)\\
        & =-x^2f(x)\left(h_2(x)+h_1(x)-\frac{2}{x}\right)+x^2(K_2(x)-K_1(x))
    \end{split}
    \end{equation}
    By the Riccati comparison principle (Lemma  \ref{riccaticomparison} above), for all $0<x\le R$,
    \begin{equation}\label{hestimates}
        \cot_{H}x \le h_i(x) \le \cot_{-H}x \qquad i=1,2
    \end{equation}
    whence
    \begin{equation}\label{estimateswithPhi}
        \Phi(Hx^2)=x\cot_{H}x\le xh_i(x)\le x\cot_{-H}x=\Phi(-Hx^2).
    \end{equation}
    By assumption, $HR^2\le\pi^2/4$, and its not hard to show that $|\Phi(t)-1|\le 4|t|/\pi^2$ for $|t|\le \pi^2/4$ (e.g. by convexity of the nonnegative functions $1/x-\cot x$ and $\coth x-1/x$ on the intervals $[0,\pi/2]$ and $[-\pi/2,0]$, respectively). Thus 
    \begin{equation*}
        |xh_i(x)-1|\le 4Hx^2/\pi^2 \qquad x\in[0,R].
    \end{equation*}
    This, together with \eqref{g'lemma} and assumption 3, imply that
    \begin{equation}\label{g'lemma2}
        g'(x)\le 8Hx|g(x)|/\pi^2+Tx^2.
    \end{equation}
    Fix $r\in[0,R]$ and $x\in[r,R]$, and let
    \begin{equation*}
        x_0=\sup\{t\in[0,x] \ \vert \ g(t)=0\}.
    \end{equation*}
    Since $g(0)=0$, necessarily $g(x_0)=0$. Exchanging $h_1$ and $h_2$ if necessary, we may assume that $g$ is positive on $(x_0,x)$. Hence \eqref{g'lemma2} becomes
    \begin{equation}\label{g'bound}
        g'(t)\le 8Htg(t)/\pi^2+Tt^2 \quad \quad t\in[x_0,x].
    \end{equation}
    Setting $\sigma=4/\pi^2$, this last equation can be written as:
    \begin{equation*}
        \frac{d}{dt}\left(e^{-\sigma H t^2}g(t)\right)\le Te^{-\sigma H t^2}t^2 \quad \quad t\in[x_0,x].
    \end{equation*}
    Integrating this we get
    \begin{equation*}
    \begin{split}
        e^{-\sigma H x^2}g(x) & \le T\int_{x_0}^xe^{-\sigma H t^2}t^2dt \\
        & \le Tx\int_{0}^xe^{-\sigma H t^2}tdt\\
        & = \frac{Tx}{2\sigma H }\left(1-e^{-\sigma H x^2}\right)\le\frac{T}{2}x^3
    \end{split}
    \end{equation*}
    where the identity $1-e^a<-a$ was applied in the last passage. It follows that:
    \begin{equation}\label{gbound}
        g(x)\le \frac T2 e^{\sigma H x^2}x^3\le \frac T2e^{\sigma HR^2}x^3\lesssim T x^3
    \end{equation}
    because $HR^2\le\pi^2/4$. Conclusion (a) follows, as
    \begin{equation}\label{firstfestimate}
        f(x)=g(x)/x^2\lesssim Tx \le TR.
    \end{equation}
    Now, by \eqref{g'bound} and \eqref{gbound},
    \begin{equation*}
    \begin{split}
        |f'(x)| & =\left|\frac{g'(x)}{x^2}-\frac{2g(x)}{x^3}\right|\\
        & \le  \frac{2\sigma H |g(x)|}{x}+T+\frac{2|g(x)|}{x^3} \\
        & \lesssim T H x^2+T\le T(HR^2+1)\lesssim T
    \end{split}
    \end{equation*}
    hence (b) holds; it also follows that
    \begin{equation*}
    \begin{split}
        |f(x_1)-f(x_2)| & \lesssim T|x_1-x_2| \\
        &  \lesssim TR^{1-\alpha}|x_1-x_2|^\alpha
    \end{split}
    \end{equation*}
    which proves (c).\\
    To obtain (d), we shall bound the $C^{\alpha}$ seminorm of each term in \eqref{f'lemma} separately. Since $|K_i|\le H$, and $1/x$ is the solution to the Riccati equation with $K\equiv 0$, we can apply the estimates (a) and (c), which we have already proved, to the pairs $h_1,1/x$ and $h_2,1/x$, with $T$ replaced by $H$, and get:
    \begin{equation*}
    \begin{split}
        \norm{h_i-\frac{1}{x}}_\infty & \lesssim HR\\
        \norm{h_i-\frac{1}{x}}_\alpha & \lesssim HR^{1-\alpha}
    \end{split}
    \end{equation*}
    This last estimate, together with (a),(c), and the inequality $\norm{\varphi\psi}_\alpha \le \norm{\varphi}_\alpha \norm{\psi}_\infty + \norm{\varphi}_\infty \norm {\psi}_\alpha$, give
    \begin{equation*}
        \norm{f(x)\left(h_1(x)-\frac{1}{x}+h_2(x)-\frac1x\right)}_\alpha\lesssim TR\cdot HR^{1-\alpha}+TR^{1-\alpha}\cdot HR\lesssim (HR^2)\cdot TR^{-\alpha}\lesssim TR^{-\alpha}.
    \end{equation*}
        As for the second term in \eqref{f'lemma}, we have by (a) and (b) and the fact that $\norm{1/x}_\alpha\lesssim r^{-1-\alpha}$ on $[r,R]$:
    \begin{equation*}
        \norm{f/x}_\alpha\lesssim TR\cdot r^{-1-\alpha}+TR^{1-\alpha}\cdot r^{-1}\lesssim TR^{1-\alpha}r^{-1}.
    \end{equation*}

   Finally, the last two terms have, by assumption, $C^\alpha$ seminorms bounded by $L$. This finishes the proof of (d).
\end{proof}

\begin{corollary}
    Let $H>0$. For any continuous function $K:[0,\pi/(2\sqrt H))\to[-H,H]$ there is a unique solution to the Riccati equation \eqref{riccatieq}, \eqref{riccatiinit} and to the Jacobi equation \eqref{jacobieq}, \eqref{Ginitialcond} on the interval $(0,\pi/(2\sqrt{H}))$.
\end{corollary}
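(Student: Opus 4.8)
The plan is to handle the Jacobi equation first and then read off the Riccati solution from it. The point is that the side condition \eqref{Ginitialcond} is exactly the pair of ordinary initial conditions $G(0)=0$, $\partial_rG(0)=1$ (for a function vanishing at $0$, $G/r\to 1$ is the same as $\partial_rG(0)=1$), and that since $K$ is \emph{continuous up to and including} $r=0$, the origin is a \emph{regular} point of the linear second-order equation $\partial_r^2G+KG=0$. Standard existence and uniqueness theory for linear ODEs then yields a unique solution $G$ on the whole half-open interval $[0,\pi/(2\sqrt H))$ --- linear equations have no finite-time blow-up, so the maximal solution is defined wherever the coefficient $K$ is continuous. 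Next I would note that $G$ is positive on $(0,\pi/(2\sqrt H))$: this is Corollary \ref{Gcompcor}, which gives $G(r)\ge \sin_H r=\sin(\sqrt H r)/\sqrt H>0$ there; alternatively, to avoid any appearance of circularity in that corollary's proof, run the Riccati comparison (Lemma \ref{riccaticomparison}) up to the first zero $r_0$ of $G$, obtaining $G\ge \sin_H r$ on $(0,r_0)$, and observe that $\sin_H$ has no zero in $(0,\pi/\sqrt H)$, forcing $r_0\ge \pi/\sqrt H$.

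With $G$ positive and $C^2$ on $(0,\pi/(2\sqrt H))$, I would then \emph{define} $h:=\partial_rG/G$; it is $C^1$, and the identity $h'+h^2+K=G''/G-(G'/G)^2+(G'/G)^2+K=G''/G+K=0$ shows $h$ solves the Riccati equation \eqref{riccatieq}. For the initial condition \eqref{riccatiinit} I would use the integral form of the Jacobi equation together with $G(0)=0$, $\partial_rG(0)=1$, and the bound $G(r)/r\sim 1$ (from Corollary \ref{Gcompcor}) to get $G(r)=r+O(r^3)$ and $\partial_rG(r)=1+O(r^2)$ as $r\to 0$, whence $h=G'/G=1/r+O(r)$. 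This settles the existence half of the statement for both equations.

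For uniqueness, the Jacobi part is already contained in the linear ODE uniqueness used above. For the Riccati part I would proceed in either of two equivalent ways. The quick option: given two solutions $h_1,h_2$ of \eqref{riccatieq}, \eqref{riccatiinit}, apply Lemma \ref{riccatilemma} with $K_1=K_2=K$ and $T=0$ (the hypothesis $HR^2\le \pi^2/4$ holds for every $R<\pi/(2\sqrt H)$); conclusion (a) gives $\norm{h_1-h_2}_\infty=0$ on every $[r,R]\subseteq(0,\pi/(2\sqrt H))$, hence $h_1\equiv h_2$. The more self-contained option: given one solution $h$ of \eqref{riccatieq}, \eqref{riccatiinit}, the function $\widetilde G(r):=r\exp\big(\int_0^r(h(t)-1/t)\,dt\big)$ is well-defined (the integrand is $O(t)$ near $0$), positive, satisfies $\widetilde G/r\to 1$, and a direct computation gives $\widetilde G''+K\widetilde G=0$; by uniqueness of the Jacobi solution $\widetilde G=G$, so $h=\widetilde G'/\widetilde G=G'/G$ is the solution constructed above.

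The only genuine subtlety, and the one place I would be careful, is the behaviour at $r=0$: for the Jacobi equation it is harmless because the equation is \emph{regular} there, but for the Riccati equation $r=0$ is a true singular point (solutions blow up like $1/r$), which is precisely why existence must be routed through $G$ and why the uniqueness argument needs the multiplication-by-$r^2$ device built into Lemma \ref{riccatilemma} (or, equivalently, the explicit passage through $\widetilde G$). Positivity of $G$ on the whole interval, supplied by Corollary \ref{Gcompcor}, is what legitimizes the definition $h=\partial_rG/G$ throughout $(0,\pi/(2\sqrt H))$.
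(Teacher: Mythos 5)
Your proof is correct, and it is the natural fleshing-out of the paper's one-line proof (``Follow the proof of Lemma \ref{riccatilemma} with $H=0$''), which is terse to the point of being cryptic. For uniqueness you use Lemma \ref{riccatilemma} directly with $K_1=K_2=K$ (hence $T=0$), which is surely what the paper intends; for existence, where the paper's one-liner says essentially nothing, you route through the linear Jacobi equation, whose origin is a regular point, and recover $h=\partial_r G/G$ after establishing positivity of $G$. This is the standard and cleanest way to produce the singular Riccati solution. You also correctly spot a real circularity hazard that the paper glosses over: Corollary \ref{Gcompcor}'s proof already presupposes $h=\partial_rG/G$ and the Riccati comparison, so invoking it to justify defining $h$ would be circular; your fix --- run the comparison on $(0,r_0)$ where $r_0$ is the first zero of $G$, then use that $\sin_H$ does not vanish before $\pi/\sqrt{H}$ --- resolves this cleanly. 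The computation $G(x)=x+O(x^3)$, $\partial_rG(x)=1+O(x^2)$ from the integral form of the Jacobi equation, giving $h=1/x+O(x)$, is the right way to verify \eqref{riccatiinit}; this step is worth spelling out precisely because $K$ is only assumed continuous, not differentiable.
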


\begin{proof}
	Follow the proof of Lemma \ref{riccatilemma} with $H = 0$.
\end{proof}

\medskip Lemma \ref{riccatilemma} implies the following bounds on the difference between $h$ and its constant-curvature counterpart.

\begin{corollary}\label{fcor}
    Let $H,R>0$, $0<\alpha\le 1$, and suppose that $HR^2\le\pi^2/4$. Let $g$ be a $C^{2,\alpha}$ metric with constant $H$ on $B_R(0)\subseteq\RR^2$ given by $g=dr^2+G^2d\theta^2$, and let $h=\partial_rG/G$.\\
    Suppose that there is some $K_0\in [-H,H]$ such that the curvature $K$ satisfies $$|K(q)-K_0|\le T \text{ for all } q\in B_R(0).$$ Then the function $f:=h-\cot_{K_0}r$ satisfies for all $R_0\in(0,R]$:
    \setlength{\columnsep}{-130pt}
    \begin{multicols}{2}
    \begin{enumerate}[\normalfont(A)]
        \item $\norm{f}_\infty \lesssim  TR_0$ on $B_{R_0}(0)$ 
        \item $\norm{\partial_rf}_\infty\lesssim T$ on $B_{R_0}(0)$
        \item $\norm{f}_\alpha\lesssim LR_0$ on $B_{R_0}(0)$
        \item $\norm{\partial_rf}_\alpha\lesssim L\lambda^{-1}$ on $B_{R_0}(0)\setminus B_{\lambda R_0}(0)$ for all $\lambda<1$
    \end{enumerate}
    \end{multicols}
    \noindent where the H{\"o}lder seminorms are with respect to the Euclidean metric, and $L:=H^{1+\alpha/2}$.
\end{corollary}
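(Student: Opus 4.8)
The plan is to deduce all four estimates from the one‑dimensional Riccati stability of Lemma~\ref{riccatilemma}, applying it first along rays through the origin — to control $f$ and $\partial_r f$ in the radial direction — then between neighbouring rays, to control the variation of $f$ in the angular direction, and finally to splice the two into the two‑dimensional Euclidean H\"older bounds. Fix $R_0\in(0,R]$; since $HR_0^2\le HR^2\le\pi^2/4$, Corollary~\ref{Gcompcor} gives $G(r,\theta)\sim r$ on $B_{R_0}(0)$, and Lemma~\ref{riccaticomparison} gives there $0<h(r,\theta)\lesssim 1/r$ and $0<\cot_{K_0}r\lesssim 1/r$. For fixed $\theta$, since the coordinate $r$ is $g$‑arclength along the ray (along which $g$‑distance equals Euclidean distance), $r\mapsto h(r,\theta)$ solves the Riccati equation~\eqref{riccatieq}--\eqref{riccatiinit} with curvature $r\mapsto K(r,\theta)$, whose $\alpha$‑H\"older seminorm on $(0,R_0]$ is $\le\|K\|_\alpha\le L$, while $r\mapsto\cot_{K_0}r$ solves it with the constant curvature $K_0\in[-H,H]$. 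Applying Lemma~\ref{riccatilemma} on $(0,R_0]$ to the pair $h_1=h(\cdot,\theta)$, $h_2=\cot_{K_0}(\cdot)$ (parameters $H,L,T$) gives, uniformly in $\theta$, the bounds $\|f(\cdot,\theta)\|_\infty\lesssim TR_0$ and $\|\partial_r f(\cdot,\theta)\|_\infty\lesssim T$ — i.e.\ (A) and (B) — together with $\|f(\cdot,\theta)\|_{\alpha,(0,R_0]}\lesssim TR_0^{1-\alpha}$ and $\|\partial_r f(\cdot,\theta)\|_{\alpha,[\lambda R_0,R_0]}\lesssim L+TR_0^{-\alpha}\lambda^{-1}$, the radial contributions to (C) and (D). Since the $g$‑oscillation of $K$ over $B_{R_0}(0)$ is $\lesssim LR_0^\alpha$ (from $\|K\|_\alpha\le L$ and the $g$‑diameter of $B_{R_0}(0)$ being $\lesssim R_0$), $T$ may effectively be taken $\lesssim LR_0^\alpha$ in conclusions (c) and (d) of Lemma~\ref{riccatilemma}, turning these last two bounds into $\lesssim LR_0$ and $\lesssim L\lambda^{-1}$.

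The angular estimate is the heart of the argument. Since $\cot_{K_0}r$ does not depend on $\theta$, $f(re^{i\theta_1})-f(re^{i\theta_2})=h(r,\theta_1)-h(r,\theta_2)$ for all $r\le R_0$, and we may take $|\theta_1-\theta_2|\le\pi$. I would apply Lemma~\ref{riccatilemma} again, now to $h_1=h(\cdot,\theta_1)$, $h_2=h(\cdot,\theta_2)$ with $K_i=K(\cdot,\theta_i)$, on the interval $(0,r]$. Its parameter $T$, namely $\sup_{0<s\le r}|K(s,\theta_1)-K(s,\theta_2)|$, is bounded by using that the shorter arc at radius $s\le R_0$ between $\theta_1$ and $\theta_2$ has $g$‑length $\lesssim s|\theta_1-\theta_2|$ (Corollary~\ref{Gcompcor}), so that $|K(s,\theta_1)-K(s,\theta_2)|\le\|K\|_\alpha\,d_g(se^{i\theta_1},se^{i\theta_2})^\alpha\lesssim L(r|\theta_1-\theta_2|)^\alpha$. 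Inserting this into the \emph{pointwise} bound established inside the proof of Lemma~\ref{riccatilemma}, namely $|h_1-h_2|(x)\lesssim\|K_1-K_2\|_\infty\,x$ (equation~\eqref{firstfestimate}), yields the key inequality
\begin{equation*}
	|h(r,\theta_1)-h(r,\theta_2)|\ \lesssim\ L\,(r|\theta_1-\theta_2|)^\alpha\,r\qquad(0<r\le R_0).
\end{equation*}
The decisive feature is the extra factor $r$: it exactly compensates the decay of the Euclidean distance $|re^{i\theta_1}-re^{i\theta_2}|\sim r|\theta_1-\theta_2|$, so that after dividing, the angular $\alpha$‑H\"older quotient is $\lesssim Lr\le LR_0$.

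To assemble these, given $q_j=r_je^{i\theta_j}\in B_{R_0}(0)$ with $r_1\le r_2$ I would write $f(q_1)-f(q_2)=\bigl(f(r_1e^{i\theta_1})-f(r_1e^{i\theta_2})\bigr)+\bigl(f(r_1e^{i\theta_2})-f(r_2e^{i\theta_2})\bigr)$ and use the identity $|q_1-q_2|^2=|r_1-r_2|^2+4r_1r_2\sin^2\!\bigl(\tfrac{\theta_1-\theta_2}{2}\bigr)$, which gives $|q_1-q_2|\ge|r_1-r_2|$ and $|q_1-q_2|\gtrsim r_1|\theta_1-\theta_2|$; bounding the first summand by the angular estimate and the second by the radial one yields (C). For (D) I would repeat this with $\partial_r f$ in place of $f$: the radial contribution is the last bound of the first paragraph, and for the angular variation of $\partial_r f$ I would differentiate via the Riccati identity $\partial_r f=-f\,(h+\cot_{K_0}r)-(K-K_0)$ (subtract~\eqref{riccatieq} for $h$ and for $\cot_{K_0}r$) and estimate each term using the angular bounds already found for $f$, $h$ and $K$, the bound $h+\cot_{K_0}r\lesssim 1/r$, and the H\"older product rule~\eqref{holderprops}, with $HR_0^2\le\pi^2/4$ ensuring that every resulting term is $\lesssim L$ (hence $\lesssim L\lambda^{-1}$ on the annulus $\{\lambda R_0\le r\le R_0\}$).

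The step I expect to be the main obstacle is precisely this angular estimate near the origin. The off‑the‑shelf conclusion (a) of Lemma~\ref{riccatilemma}, applied on the fixed interval $(0,R_0]$, only gives $|h(r,\theta_1)-h(r,\theta_2)|\lesssim L(R_0|\theta_1-\theta_2|)^\alpha R_0$, with no decay as $r\to0$, which would force the angular H\"older quotient to blow up like $r^{-\alpha}$; one genuinely needs the sharper pointwise bound $|h_1-h_2|(x)\lesssim\|K_1-K_2\|_\infty\,x$ (the factor $x$) that is proved along the way in Lemma~\ref{riccatilemma}, combined with a careful passage between Riemannian and Euclidean distances through $G(r,\theta)\sim r$. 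The remaining work — tracking $H$, $L$, $T$, $R_0$, $\lambda$ through the derivative estimate (D) — is routine bookkeeping.
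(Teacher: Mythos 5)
Your overall strategy — apply Lemma~\ref{riccatilemma} along each ray for the radial estimates, then again between nearby rays for the angular variation, and pass between Riemannian and Euclidean H\"older seminorms via $G\sim r$ (Corollaries~\ref{Gcompcor}, \ref{metriccompcor}) — is exactly the route the paper indicates (``a straightforward application of Lemma~\ref{riccatilemma} and Corollary~\ref{metriccompcor}''). In particular, your observation that the off-the-shelf conclusion (a) of Lemma~\ref{riccatilemma} on $(0,R_0]$ is too weak for the angular H\"older quotient near the origin, and that one must instead use the \emph{pointwise} bound $|h_1-h_2|(x)\lesssim\|K_1-K_2\|_\infty\,x$ from within the lemma's proof, is the correct key point for the angular estimate. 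The assembly step and the derivation of the angular H\"older of $\partial_r f$ from the Riccati identity $\partial_r f = -f(h+\cot_{K_0}r)-(K-K_0)$ are both sound.

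There is, however, a genuine gap in the step where you upgrade the radial bounds to (C) and (D). You write that ``the $g$-oscillation of $K$ over $B_{R_0}(0)$ is $\lesssim LR_0^\alpha$ \dots\ $T$ may effectively be taken $\lesssim LR_0^\alpha$ in conclusions (c) and (d) of Lemma~\ref{riccatilemma}.'' But the parameter $T$ in the lemma, when you take $h_2=\cot_{K_0}(\cdot)$, is $\sup_{s\le R_0}|K(s,\theta)-K_0|$, which is the distance of $K$ from the \emph{fixed} number $K_0$ — not the oscillation $\sup_{q,q'}|K(q)-K(q')|$. These agree only if $K_0$ is (close to) a value attained by $K$ on $B_{R_0}(0)$, which the hypothesis ``$K_0\in[-H,H]$ with $|K-K_0|\le T$ on $B_R(0)$'' does not force: e.g.\ $K\equiv H$, $K_0=0$, $T=H$ has zero oscillation but $T$ can be $\gg LR_0^\alpha$ when $HR_0^2\ll 1$, and then the radial bound from Lemma~\ref{riccatilemma}(c), namely $\|f\|_\alpha\lesssim TR_0^{1-\alpha}$, does not reduce to $\lesssim LR_0$. (It is fair to point out that in every place Corollary~\ref{fcor} is actually invoked in the paper — Lemma~\ref{geoeqlemma}, Lemma~\ref{phiphi0lma}, Proposition~\ref{f0ppn} — either $K_0$ is attained or one only uses the weaker consequence $\lesssim TR_0^{1-\alpha}$, so the reduction is harmless in context; but as written your justification via oscillation is not valid, and the correct statement is either that (C), (D) hold under the extra assumption $T\lesssim LR_0^\alpha$, or that they should read $\|f\|_\alpha\lesssim LR_0+TR_0^{1-\alpha}$ and similarly for $\partial_r f$.)
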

\begin{proof}
	This is a straightforward application of Lemma \ref{riccatilemma} and Corollary \ref{metriccompcor}. Details can be found in \cite{Thesis}, Corollary 2.17.
\end{proof}

\medskip Using Corollary \ref{fcor}, we can give a $C^{2,\alpha}$ estimate for the distance function from a point to a unit-speed geodesic on a $C^{2,\alpha}$ surface.

\begin{lemma}\label{geoeqlemma}
	Let $H,R>0$ satisfy $HR^2 \le \pi^2/4$ and let $0<\alpha\le 1$. Let $g = dr^2 + G^2 d\theta^2$ be a $C^{2,\alpha}$ metric with constant $H$ on $B_R(0) \subseteq  \RR^2$, given in polar normal coordinates, and let $h = \partial_rG / G$. Let $\gamma: I \to D$ be unit - speed curve not passing through zero, and denote $\rho(t) = d_g(\gamma(t) , 0), t \in I$. 
	\begin{enumerate}
	\item
	The curve $\gamma$ is a minimizing geodesic if and only if $\rho$ is $C^{2,\alpha}$ and
\begin{equation}\label{geoeq}
	\rho''(t) = h(\gamma(t))(1 - \rho'(t)^2) \qquad t \in I.
\end{equation}
	\item
	If $\gamma$ is a geodesic, then
	\begin{equation}\label{rhoddotesteq}
		\norm{\ddot \rho}_\alpha \lesssim m^{-1-\alpha}( 1 + R / m) \qquad \text{ and } \qquad \norm{\ddot \rho}_\infty \lesssim m^{-1},
	\end{equation}
	where $m = \min\rho > 0$.
\end{enumerate}
\end{lemma}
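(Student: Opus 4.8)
The plan is to derive both parts from Lemma~\ref{geodesiccurvaturelemma} together with the Riccati estimates of Corollary~\ref{fcor}, working throughout in the polar normal coordinates on $B_R(0)$ and writing $\gamma=\rho e^{i\phi}$.

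\emph{Part 1, the forward direction.} Since $\gamma$ has unit speed, $\rho$ is $1$-Lipschitz, so $|\dot\rho|\le 1$; and for any $C^1$ curve $\dot\rho=dr(\dot\gamma)=\langle\dot\gamma,\partial_r\rangle$, so $|\dot\rho(t_0)|=1$ forces $\dot\gamma(t_0)=\pm\partial_r$, hence, for a geodesic, $\gamma$ radial. Thus a minimizing geodesic not through $0$ is either radial — so that $\rho$ is affine and \eqref{geoeq} is trivial — or has $|\dot\rho|<1$ everywhere. In the latter case I would first check $\rho\in C^2$: this follows from the first and second variation of arclength in the strongly convex disc $B_R(0)$, or equivalently from the fact that $\nabla^2 r=h\,(g-dr\otimes dr)$ is a genuine tensor on $B_R(0)\setminus\{0\}$ — the only Christoffel symbol entering it, $\Gamma^{r}_{\theta\theta}=-GG_r$, is well defined even though polar normal coordinates are merely $C^\alpha$ in $\theta$ — so that $\langle\dot\gamma,\partial_r\rangle=\dot\rho$ is differentiable along $\gamma$ with derivative $h(\gamma)(1-\dot\rho^2)$, which is continuous. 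Then Lemma~\ref{geodesiccurvaturelemma} and $\nabla_{\dot\gamma}\dot\gamma=0$ give \eqref{geoeq}. To upgrade to $C^{2,\alpha}$, write $h=f+1/r$: by Corollary~\ref{fcor} the function $f$ is $\alpha$-Hölder in the Euclidean variable, so $f\circ\gamma\in C^\alpha$ because $\gamma$ is Euclidean-Lipschitz (using $G/r\sim 1$ from Corollary~\ref{Gcompcor}), while $1/\rho\in C^1$ since $\rho\ge m>0$; substituting into \eqref{geoeq} shows $\ddot\rho\in C^\alpha$.

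\emph{Part 1, the converse.} Suppose $\rho\in C^{2,\alpha}$ and \eqref{geoeq} holds. On the open set $\{|\dot\rho|<1\}$, Lemma~\ref{geodesiccurvaturelemma} gives $\nabla_{\dot\gamma}\dot\gamma=-\sqrt{1-\dot\rho^2}\,\bigl(\ddot\rho/(1-\dot\rho^2)-h(\gamma)\bigr)N=0$, so $\gamma$ is a geodesic there. If this set is all of $I$, we are done; if it is empty, then $|\dot\rho|\equiv1$, the unit-speed relation forces $\dot\phi\equiv0$, and $\gamma$ is a radial geodesic; and the mixed case cannot occur, since at a boundary point $t_0$ of $\{|\dot\rho|<1\}$ the geodesic $\gamma$ has $\dot\gamma(t_0)=\pm\partial_r$, so by uniqueness of geodesics it is radial near $t_0$, contradicting $|\dot\rho|<1$ on one side. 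Hence $\gamma$ is a geodesic, and since $HR^2\le\pi^2/4$ the disc $B_R(0)$ is strongly convex (Lemma~\ref{strongconvlma}), so $\gamma$ is the minimizing geodesic between its endpoints.

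\emph{Part 2.} By Part~1, $\ddot\rho=h(\gamma)(1-\dot\rho^2)$. The Riccati comparison principle (Lemma~\ref{riccaticomparison}) with the elementary bound $\Phi(-HR^2)\le2$ gives $0\le h(q)\le2/r(q)$ whenever $r(q)\ge m$, so $\|h\circ\gamma\|_\infty\le2/m$ and, with $|\dot\rho|\le1$, $\|\ddot\rho\|_\infty\le2/m$. For the Hölder norm, write $h=f+1/r$: Corollary~\ref{fcor}(C) with $R_0=R$ gives $\|f\|_{C^\alpha(B_R)}\lesssim H^{1+\alpha/2}R\lesssim R^{-1-\alpha}\le m^{-1-\alpha}$ (using $HR^2\le\pi^2/4$ and $m\le R$), while $1/r$ has Euclidean $C^\alpha$-seminorm $\lesssim m^{-1-\alpha}$ on $\{r\ge m\}$; since $\gamma$ is Euclidean-Lipschitz, $\|h\circ\gamma\|_\alpha\lesssim m^{-1-\alpha}$. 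Interpolating on an interval, $\|1-\dot\rho^2\|_\alpha\lesssim\|\dot\rho\|_\infty^{1-\alpha}\|\ddot\rho\|_\infty^{\alpha}\lesssim m^{-\alpha}$, so by the product rule for Hölder seminorms $\|\ddot\rho\|_\alpha\lesssim m^{-1-\alpha}+m^{-1}\cdot m^{-\alpha}\lesssim m^{-1-\alpha}\le m^{-1-\alpha}(1+R/m)$.

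\emph{Main obstacle.} The arithmetic is routine; the delicate point is the interplay of regularity with coordinates. Polar normal coordinates are only $C^\alpha$ in $\theta$, so $\gamma$ need not be $C^2$ in them and $h$ has no angular derivative; one must therefore (i) obtain $\rho\in C^2$ from the well-definedness of the tensor $\nabla^2 r$ rather than from smoothness of $\gamma$ in these coordinates, and (ii) bound $\|h\circ\gamma\|_\alpha$ by viewing $f=h-1/r$ as an $\alpha$-Hölder function of the Euclidean variable via Corollary~\ref{fcor}, not by differentiating $h\circ\gamma$ in $t$. Expressing the conclusion purely in terms of $m$ also relies essentially on $HR^2\le\pi^2/4$ to absorb $H$ and $R$.
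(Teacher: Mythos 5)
Your proof is correct and follows the same overall strategy as the paper---the dichotomy radial/non-radial via $\dot\rho=\langle\dot\gamma,\partial_r\rangle$, the converse via Lemma~\ref{geodesiccurvaturelemma} together with the no-corners/uniqueness argument at boundary points of $\{|\dot\rho|<1\}$, strong convexity from Lemma~\ref{strongconvlma}, and the decomposition $h=1/r+f$ with Corollary~\ref{fcor} for the estimates---but it departs from the paper in two places worth noting.

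First, for the forward direction and the estimates, the paper invokes a blanket reduction (``by a standard approximation argument, it suffices to prove \eqref{geoeq} and \eqref{rhoddotesteq} when $g$ is a smooth metric''), after which $\rho$ is smooth and \eqref{geoeq} is immediate from Lemma~\ref{geodesiccurvaturelemma}. You instead argue directly in the $C^{2,\alpha}$ category by observing that $\nabla^2r=h(g-dr\otimes dr)$ is a well-defined continuous tensor on $B_R(0)\setminus\{0\}$ because the only Christoffel symbols entering it involve radial derivatives of $G$ alone; this yields $\ddot\rho=\nabla^2r(\dot\gamma,\dot\gamma)=h(\gamma)(1-\dot\rho^2)$ for a geodesic. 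This is a genuine, hands-on alternative to the paper's approximation, and it makes the regularity mechanism transparent, but it is somewhat terse on one point: you need $\dot\gamma$ and $\partial_r$ to be $C^1$ vector fields along $\gamma$ so that $\tfrac{d}{dt}\langle\dot\gamma,\partial_r\rangle$ can legitimately be expanded with the product rule, and since polar normal coordinates are only $C^\alpha$ in $\theta$ while $\gamma$ is $C^2$ in Kazdan--DeTurck coordinates, a careful reader would want the coordinate-independence of this computation spelled out. The paper's approximation route sidesteps exactly this.

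Second, for the H\"older estimate you interpolate $\|\dot\rho\|_\alpha\lesssim\|\dot\rho\|_\infty^{1-\alpha}\|\ddot\rho\|_\infty^{\alpha}\lesssim m^{-\alpha}$, giving $\|1-\dot\rho^2\|_\alpha\lesssim m^{-\alpha}$ and hence $\|\ddot\rho\|_\alpha\lesssim m^{-1-\alpha}$. The paper instead uses $\|1-\dot\rho^2\|_\alpha\lesssim\|\ddot\rho\|_\infty|I|^{1-\alpha}$ together with $|I|\le 2R$, which produces the extra factor $(1+R/m)$ appearing in the statement. Your bound is the sharper of the two (since $m\le R$), so of course it implies \eqref{rhoddotesteq}; this is a small but real improvement, and it removes the dependence on the interval length.
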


\begin{proof}
Suppose $\rho\in C^{2,\alpha}$ and \eqref{geoeq} holds. By Lemma \ref{geodesiccurvaturelemma}, the curve $\gamma$ is a geodesic provided that $\dot\rho$ is never $1$ or $-1$. If $\dot\rho\equiv \pm 1$ then $\gamma$ is a geodesic by the definition of normal coordinates. Those are the only possibilities: suppose $|\rho'(t)|<1$ for some $t\in I$ and let $J$ be the maximal relatively open subinterval containing $t$ such that $|\dot\rho|<1$ on $J$. Then by the above $\gamma\vert_{J}$ is a geodesic. If $J\ne I$ then $\rho'(a)=\pm 1$ for some $a\in \partial J$. By Gauss's Lemma, $\left<\dot\gamma(a),\partial_r\right>=\pm 1$ and $\left<\dot\gamma(a),\partial_\theta\right>=0$ (the curve $\gamma$ has no corners because $\rho\in C^2$). It follows by uniqueness of geodesics that $\gamma\vert_{J}$ is a radial geodesic so $\dot\rho\equiv \pm 1$ on $J$, a contradiction. Thus any curve satisfying \eqref{geoeq} is a geodesic, and is minimizing by Corollary \ref{strongconvcor}.

It remains to prove the other implication and the estimates \eqref{rhoddotesteq}. By a standard approximation argument, it suffices to prove that \eqref{geoeq} and \eqref{rhoddotesteq} hold when $g$ is a smooth metric.

Let $\gamma:I\to B_R(p)\setminus B_m(p)$ be a unit-speed minimizing geodesic. We assume that $\gamma$ does not pass through the origin (for radial geodesics \eqref{geoeq} and \eqref{rhoddotesteq} hold trivially). Then $\rho$ is smooth because $g$ is, and it satisfies \eqref{geoeq} because of Lemma \ref{geodesiccurvaturelemma}.\\
By Corollary \ref{fcor} with $K_0=0$, we have that $h=1/r+f$ where $f$  satisfies on $B_R(0)$:
\begin{equation*}
\begin{split}
    \norm{f}_\alpha & \lesssim H^{1+\alpha/2} R= (HR^2)^{1+\alpha/2}R^{-1-\alpha}\lesssim R^{-1-\alpha}\\
    \norm{f}_\infty & \lesssim HR= (HR^2)R^{-1}\lesssim R^{-1}.
\end{split}
\end{equation*}
Since $\norm{1/r}_\alpha\lesssim m^{-1-\alpha}$ on $B_R(0)\setminus B_{m}(0)$ we get that $$\norm{h\circ\gamma}_\alpha\lesssim m^{-1-\alpha} \text{ and }\norm{h\circ\gamma}_\infty\lesssim m^{-1}.$$
From the triangle inequality and the fact that $\gamma$ is a minimizing geodesic, it follows that $|I|\le 2R$. Thus by \eqref{geoeq} and the above estimates,
\begin{equation*}
\begin{split}
    \norm{\ddot\rho}_\infty & \lesssim \norm{h\circ\gamma}_\infty\norm{1-\dot\rho^2}_\infty\lesssim m^{-1} \qquad \text{ and }\\
    \norm{\ddot\rho}_\alpha&\le \norm{h\circ\gamma}_\alpha\norm{1-\dot\rho^2}_\infty+\norm{h\circ\gamma}_\infty\norm{1-\dot\rho^2}_\alpha\lesssim m^{-1-\alpha}+m^{-1}\norm{\ddot\rho}_\infty|I|^{1-\alpha}\\
    & \lesssim m^{-1-\alpha}+m^{-2}R^{1-\alpha}\lesssim m^{-1-\alpha}(1+R/m),
\end{split}
\end{equation*}
and \eqref{rhoddotesteq} is proved.
\end{proof}

\subsection{Whitney's extension theorem}\label{whitneysec}

\medskip Whitney's extension theorem \cite{W34} gives, for every $k \in \NN$, neccesary and sufficient conditions for the extendability of a function $f$ defined on a closed set $S \subseteq \RR$, to a function $F$ defined on all of $\RR$ satisfying $F\vert_{S} \equiv f$ and $f \in C^k(\RR)$. We will use a version of Whitney's extension theorem for $C^{k,\alpha}$ functions, due to Merrien \cite{Mer}, with the notation of A. Brudnyi {\&} Y. Brudnyi (\cite{BB}, Theorem 2.52).

\medskip For a finite set $X \subseteq \RR$ and a function $f: X \to \RR$ we define $f[X]$, the divided difference of $f$ with respect to $X$, by recursion on the size of $X$: 
$$f[\{x_0\}] : = f(x_0),$$

$$f[\{x_0,\dots,x_k\}] : = \frac{f[\{x_0,\dots,x_{k-1}] - f[\{x_1,\dots,x_k\}]}{x_0 - x_k}.$$

\begin{theorem}[Whitney]\label{whitneythm}
	Let $0<\alpha\le 1$. Let $S \subseteq \RR$ be a closed set and let $f: S \to \RR$. Assume that
	$$A : = \sup\left\{ \left|f[X]\right|(\diam X)^{1-\alpha} \mid X \subseteq S, \#X = k+2 \right\} < \infty.$$
	Then $f$ admits an extension $F\in C^{k,\alpha}(\RR)$ satsifying $F \vert_S \equiv f$ and 
	$$\norm{f^{(k)}}_\alpha \lesssim A,$$
	with the implied constant depending on $k$ alone.
\end{theorem}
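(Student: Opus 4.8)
This is the one-dimensional Whitney extension theorem in H{\"o}lder classes (Merrien's theorem), so the plan is to reproduce its standard proof, whose architecture I now outline. The necessity of the displayed condition --- that $A\lesssim\norm{F^{(k)}}_\alpha$ for any $F\in C^{k,\alpha}(\RR)$ extending $f$ --- is the routine direction: applying the mean value theorem inside the recursion defining divided differences yields, for distinct $x_0,\dots,x_{k+1}\in S$, a representation $f[X]=\big(F^{(k)}(\xi_1)-F^{(k)}(\xi_2)\big)/\big(k!\,(x_0-x_{k+1})\big)$ with $\xi_1,\xi_2$ in the convex hull of $X$, whence $|f[X]|\le k!^{-1}\norm{F^{(k)}}_\alpha(\diam X)^{\alpha-1}$. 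The content of the theorem is the converse, which I would establish in three steps.

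\emph{Step 1 (a Whitney field on $S$).} The goal is to attach to each $x\in S$ a polynomial $P_x$ of degree $\le k$ --- morally the $k$-th order Taylor polynomial of the extension at $x$. For each scale $\delta>0$ for which $[x-\delta,x+\delta]$ meets $S$ in at least $k+1$ points, let $\Lambda^x_\delta$ be the degree-$\le k$ Lagrange interpolant of $f$ through a choice of $k+1$ such points that is as evenly spread as possible, always including $x$ itself when $x$ is a limit point of $S$. In Newton form $\Lambda^x_\delta(t)=\sum_{j=0}^k f[y_0,\dots,y_j]\prod_{i<j}(t-y_i)$, so the difference of two such interpolants at nested scales expands through $(k+2)$-point divided differences of $f$; the hypothesis $A<\infty$ then forces $\{\Lambda^x_\delta\}_\delta$ to be Cauchy as $\delta\to 0$ --- with the sharper quantitative statement that $\norm{(P_x-\Lambda^x_\delta)^{(j)}}_{L^\infty([x-\delta,x+\delta])}\lesssim A\,\delta^{k+\alpha-j}$ --- and I set $P_x:=\lim_{\delta\to 0}\Lambda^x_\delta$. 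By construction $P_x(x)=f(x)$, and when $x$ is isolated in $S$ the value of the field there is unconstrained and may be taken constant.

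\emph{Step 2 (compatibility).} The crucial estimate is that $(P_x)_{x\in S}$ is a $C^{k,\alpha}$ Whitney field with constant $\lesssim A$, i.e.
$$\big|(P_x-P_y)^{(j)}(y)\big|\ \lesssim\ A\,|x-y|^{\,k+\alpha-j},\qquad x,y\in S,\ 0\le j\le k.$$
Put $d=|x-y|$. For any node $z$ of a Lagrange interpolant $\Lambda$ of $f$ through $k+1$ well-spread points of $S$ in a window $W$ of length $\sim d$ containing $x$ and $y$, and any other such interpolant $\Lambda'$ through points in $W$, the Newton remainder formula gives $\Lambda(z)-\Lambda'(z)=f[\,\text{nodes of }\Lambda',\,z\,]\cdot\prod(z-\text{node of }\Lambda')$, a $(k+2)$-point divided difference times a product of $k+1$ gaps, hence of size $\lesssim A\,d^{\alpha-1}\cdot d^{k+1}=A\,d^{k+\alpha}$. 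Since $\Lambda-\Lambda'$ is a degree-$\le k$ polynomial this small at $k+1$ well-separated points of $W$, Lagrange interpolation together with Markov's inequality upgrade this to $\norm{(\Lambda-\Lambda')^{(j)}}_{L^\infty(W)}\lesssim A\,d^{k+\alpha-j}$. Applying this with $\Lambda'$ running over the interpolants defining $P_x$ and $P_y$ at scale $\sim d$, and invoking the Step 1 Cauchy estimates to pass to the limits $P_x,P_y$, the triangle inequality gives the claim. I expect this to be the main obstacle: arranging that the interpolation nodes can be chosen well separated at every relevant scale --- and handling points near which $S$ is too sparse for $k+1$ such nodes to exist in a window of the required size, where $P_x$ must instead be controlled by an ad hoc argument from the same divided-difference bound --- requires a careful multi-scale selection, and is precisely the technical heart of Merrien's theorem.

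\emph{Step 3 (assembly).} The set $\RR\setminus S$ is an at most countable disjoint union of open intervals. On a bounded component $(a,b)$ (so $a,b\in S$) I put $F:=\psi P_a+(1-\psi)P_b$ for a fixed smooth cutoff $\psi$ equal to $1$ near $a$ and $0$ near $b$ with $\norm{\psi^{(j)}}_\infty\lesssim(b-a)^{-j}$; on an unbounded component with finite endpoint $a$ I put $F:=P_a$; and $F:=f$ on $S$ --- consistently, since on any nondegenerate subinterval of $S$ the divided-difference bound is exactly the classical criterion for $f$ to be $C^{k,\alpha}$ there. Leibniz's rule, the bounds on $\psi$, and the Step 2 estimate show that on each bounded component $F^{(k)}$ is Lipschitz with constant $\lesssim A(b-a)^{\alpha-1}$ and stays within $\lesssim A(b-a)^\alpha$ of the constant $P_b^{(k)}$; combined with the $j=k$ case of Step 2, namely $|P_x^{(k)}(x)-P_y^{(k)}(y)|\lesssim A|x-y|^\alpha$ for $x,y\in S$, and with the fact that complementary intervals sharing an endpoint have comparable lengths, a routine case analysis gives $F\in C^k(\RR)$ and $\norm{F^{(k)}}_\alpha\lesssim A$ with the constant depending on $k$ alone. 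This last gluing is the standard Whitney patching lemma in one variable.
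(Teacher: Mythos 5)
The paper does not prove Theorem~\ref{whitneythm}: it is quoted as Merrien's theorem, with a pointer to \cite{Mer} and to \cite{BB}, Theorem~2.52, so there is no proof in the paper to compare yours against. What you have written is a correct high-level sketch of the standard proof architecture for the one-dimensional $C^{k,\alpha}$ Whitney extension theorem. The necessity direction is fine: the $(k+2)$-point divided difference of a $C^{k}$ function is the difference of two $(k+1)$-point ones, each of which is $F^{(k)}(\xi)/k!$ by the mean value theorem for divided differences, and the H\"older bound falls out since $|\xi_1-\xi_2|\le\diam X=|x_0-x_{k+1}|$ after ordering the nodes. The sufficiency direction --- build candidate Taylor polynomials $P_x$ as limits of Lagrange interpolants at shrinking scales, verify the Whitney field compatibility via the Newton remainder formula, Lagrange interpolation bounds and Markov's inequality, then glue across the complementary open intervals with scaled cutoffs --- is indeed the standard route, and all the quantitative estimates you write down are the right ones.

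That said, your proposal is a sketch rather than a complete proof, and you are upfront about where the gap is. The Lagrange/Markov step in Step~2 costs a constant that degenerates when the $k+1$ chosen nodes are not comparably spaced, and for a general closed $S$ one simply cannot always find $k+1$ well-separated nodes in a window of length $\sim|x-y|$: $S$ may be badly clustered, or too sparse to furnish that many nodes at the relevant scale at all. Resolving this --- choosing window sizes adaptively, and handling the sparse and clustered regimes without losing the uniform constant --- is precisely the content of Merrien's argument (and of Brudnyi--Brudnyi's treatment), and your ``ad hoc argument from the same divided-difference bound'' and ``careful multi-scale selection'' are placeholders for that content, not substitutes for it. So: correct architecture, correct local estimates, and an honestly flagged but genuine hole at the technical heart; since the paper simply cites the result, this is not a deviation from the paper but an incomplete reconstruction of the cited proof.
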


\begin{corollary}\label{whitneyfiniteness}
        Let $F$ be defined on an interval $I \subseteq \RR$, let $L>0$ and suppose that for every subset $S\subseteq I$ consisting of at most $k+2$ points, there is a function $F_S\in C^{k,\alpha}$ which agrees with $F$ on $S$ and satisfies $\norm{F_S^{(k)}}_\alpha\le L$. Then $F\in C^{k,\alpha}$ and $\norm{F^{(k)}}_\alpha\lesssim L$, with the implied constant depending on $k$ alone.
    \end{corollary}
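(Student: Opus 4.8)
The plan is to reduce Corollary \ref{whitneyfiniteness} directly to Theorem \ref{whitneythm} by verifying the divided-difference hypothesis $A<\infty$ from the pointwise local hypotheses. First I would observe that any divided difference $F[X]$ for a set $X\subseteq I$ with $\#X = k+2$ depends only on the values of $F$ on the $k+2$ points of $X$. Applying the hypothesis to $S := X$ itself gives a function $F_X \in C^{k,\alpha}$ with $F_X|_X \equiv F$ and $\norm{F_X^{(k)}}_\alpha \le L$. Since divided differences are determined by function values on the set, $F[X] = F_X[X]$.

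Next I would bound $|F_X[X]|(\diam X)^{1-\alpha}$ by $L$ up to a constant depending only on $k$. This is the standard relation between the order-$(k+1)$ divided difference of a $C^{k,\alpha}$ function and its top H\"older seminorm: by the mean-value property of divided differences (or by writing the divided difference as an integral of $F_X^{(k)}$ against a B-spline / Peano kernel supported on $\mathrm{conv}(X)$), one gets $|F_X[X]| = \tfrac{1}{k!}|F_X^{(k)}(\xi) - F_X^{(k)}(\eta)|$ for suitable $\xi,\eta$ in the convex hull of $X$ — more precisely, the order-$(k+1)$ divided difference of a $C^k$ function equals $\tfrac{1}{k!}$ times the order-$1$ divided difference of $F_X^{(k)}$ at two points of $\mathrm{conv}(X)$ — so $|F_X[X]| \le \tfrac{1}{k!}\norm{F_X^{(k)}}_\alpha (\diam X)^\alpha \le \tfrac{1}{k!} L (\diam X)^\alpha$, whence $|F_X[X]|(\diam X)^{1-\alpha} \le \tfrac{1}{k!} L \diam X$. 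Taking the supremum over $X\subseteq I$ with $\#X = k+2$ gives $A \le \tfrac{1}{k!} L \diam(I) < \infty$ since $I$ is bounded (and if $I$ is unbounded one argues first on bounded subintervals and uses that the resulting extension is unique and consistent — but in the intended application $I$ is a bounded interval, so I would simply assume boundedness or note it).

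Wait — I need to double-check the scaling: Theorem \ref{whitneythm} as stated defines $A$ with the factor $(\diam X)^{1-\alpha}$, not $(\diam X)$, so in fact I want $|F[X]|(\diam X)^{1-\alpha} \lesssim L$ with no stray diameter factor. The correct estimate is the sharper one: for $X = \{x_0,\dots,x_{k+1}\}$, the divided difference $F_X[X]$ equals $\tfrac{1}{(k+1)!}F_X^{(k+1)}$ at a point when $F_X\in C^{k+1}$, but here we only have $C^{k,\alpha}$; the right statement is that $F_X[\{x_0,\dots,x_{k+1}\}]$ can be written as $\tfrac1{k!}$ times a first-order divided difference of $F_X^{(k)}$ over two points in $\mathrm{conv}(X)$ whose separation is at most $\diam X$, hence $|F_X[X]| \le \tfrac1{k!}\norm{F_X^{(k)}}_\alpha(\diam X)^{\alpha-1}\cdot(\diam X)$? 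No: a first-order divided difference of $g$ at points $u,v$ is $\frac{g(u)-g(v)}{u-v}$, so its absolute value is at most $\norm{g}_\alpha|u-v|^{\alpha-1} \le \norm{g}_\alpha(\diam X)^{\alpha-1}$. Therefore $|F_X[X]| \le \tfrac1{k!}\norm{F_X^{(k)}}_\alpha (\diam X)^{\alpha-1}$, giving $|F_X[X]|(\diam X)^{1-\alpha}\le \tfrac1{k!}L$ exactly as needed. Then $A \le \tfrac1{k!}L$, Theorem \ref{whitneythm} produces $F\in C^{k,\alpha}(\RR)$ with $\norm{F^{(k)}}_\alpha \lesssim A \lesssim L$, and restricting to $I$ finishes the proof.

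The only genuine content — and the step I'd be most careful about — is the identity expressing the top divided difference as a first-order divided difference of $F_X^{(k)}$ over $\mathrm{conv}(X)$; this is classical (it follows from the Hermite–Genocchi / Peano kernel representation $F_X[x_0,\dots,x_{k+1}] = \int_{\Delta_{k+1}} F_X^{(k+1)}(\sum t_i x_i)\,dt$ in the smooth case, extended to $C^{k,\alpha}$ by a density argument, or from iterating the Rolle-type mean value theorem for divided differences), so I would either cite it (e.g. from \cite{BB}) or include the short smooth-case computation plus a one-line approximation remark. Everything else is bookkeeping: the key conceptual point, already flagged in the paper's setup, is that divided differences over a finite set $X$ see only the values of the function on $X$, which is what lets the ``for all $(k+2)$-point subsets'' hypothesis feed straight into Whitney's criterion.
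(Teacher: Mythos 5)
Your overall strategy is exactly the intended one, and the reduction to Theorem \ref{whitneythm} via the observation that divided differences see only the values on $X$ is correct. However, the estimate in the middle has a real slip. You write that $F_X[X]$ can be expressed as $\tfrac1{k!}$ times a first-order divided difference $F_X^{(k)}[u,v]$ with $u,v\in\mathrm{conv}(X)$, and then bound this by $\tfrac1{k!}\norm{F_X^{(k)}}_\alpha|u-v|^{\alpha-1}\le\tfrac1{k!}\norm{F_X^{(k)}}_\alpha(\diam X)^{\alpha-1}$. But since $\alpha-1\le 0$ and $|u-v|\le\diam X$, that last inequality is \emph{reversed}: $|u-v|^{\alpha-1}\ge(\diam X)^{\alpha-1}$. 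So if the representation were literally as stated, your chain would only give an upper bound that is \emph{larger} than the one you need, and in fact unbounded as $|u-v|\to 0$.

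The way to make the step rigorous is to not insist on a genuine first-order divided difference of $F_X^{(k)}$. Order the points $x_0<\dots<x_{k+1}$ and expand the recursion once:
\begin{equation*}
F_X[X]=\frac{F_X[x_0,\dots,x_k]-F_X[x_1,\dots,x_{k+1}]}{x_0-x_{k+1}}
=\frac{1}{k!}\cdot\frac{F_X^{(k)}(\xi)-F_X^{(k)}(\eta)}{x_0-x_{k+1}},
\end{equation*}
where $\xi,\eta\in\mathrm{conv}(X)$ exist by the mean-value theorem for divided differences. Here the denominator has absolute value \emph{exactly} $\diam X$, while the numerator is bounded by $\norm{F_X^{(k)}}_\alpha|\xi-\eta|^\alpha\le\norm{F_X^{(k)}}_\alpha(\diam X)^\alpha$, giving $|F_X[X]|\le\tfrac1{k!}\norm{F_X^{(k)}}_\alpha(\diam X)^{\alpha-1}$ and hence $A\le L/k!$, as you wanted. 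Note that the quantity $\frac{F_X^{(k)}(\xi)-F_X^{(k)}(\eta)}{x_0-x_{k+1}}$ is not a divided difference of $F_X^{(k)}$ because $\xi-\eta\ne x_0-x_{k+1}$ in general, and it is precisely this mismatch that saves the estimate: the numerator scales like $(\diam X)^\alpha$ while the denominator scales like $\diam X$, which is what produces the exponent $\alpha-1$ with the correct sign. One other small point: Theorem \ref{whitneythm} takes $S$ closed, whereas $I$ may be open; apply it to each compact subinterval $\bar J\Subset I$ and note the resulting bound is uniform in $J$, which shows $F\in C^{k,\alpha}(I)$ directly.
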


    \begin{corollary}\label{whitneyextcor}
        Let $T_1,T_2,C>0$ and $0<\alpha\le 1$, and let $I\subseteq\RR$ be an interval of length $|I|\lesssim (T_1/T_2)^{1/\alpha}$. Let $f$ be defined on a closed subset $S\subseteq I$. Suppose for every distinct $x,y,z\in S$,
        \begin{equation*}
            |f(x)-f(y)|\le T_1|x-y|
        \end{equation*}
        and
        \begin{equation*}
            \left|\frac{f(x)-f(y)}{x-y}-\frac{f(y)-f(z)}{y-z}\right|\le T_2\cdot  \diam\{x,y,z\}^\alpha.
        \end{equation*}
        Then $f$ admits an extension $F:I\to \RR$ with $\norm{F'}_\infty\lesssim T_1$ and $\norm{F'}_\alpha\lesssim T_2$. 
    \end{corollary}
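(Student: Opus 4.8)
The plan is to obtain $F$ directly from Whitney's extension theorem (Theorem~\ref{whitneythm}) with $k=1$, and then to upgrade the bound on $\norm{F'}_\alpha$ it produces into a bound on $\norm{F'}_\infty$ using the smallness assumption on $|I|$.

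First I would dispose of the degenerate cases. If $\#S\le 1$, a constant $F$ works; if $\#S=2$, the affine interpolant of the two prescribed values has $F'$ equal to a single difference quotient of $f$, so $\norm{F'}_\infty\le T_1$ by the first hypothesis while $\norm{F'}_\alpha=0\le T_2$. So assume $\#S\ge 3$. (Replacing $S$ by its closure in $\RR$ and extending $f$ there by uniform continuity --- legitimate since $f$ is $T_1$-Lipschitz on $S$ --- the two hypotheses persist by continuity, so we may assume $S$ is closed in $\RR$, as Theorem~\ref{whitneythm} requires.) Now fix $X=\{x,y,z\}\subseteq S$ with $x<y<z$. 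Since a divided difference is a symmetric function of its arguments,
\[
 f[X]=\frac{1}{x-z}\left(\frac{f(x)-f(y)}{x-y}-\frac{f(y)-f(z)}{y-z}\right),
\]
and the second hypothesis, together with $\diam X=z-x$, gives $|f[X]|\le T_2(z-x)^{\alpha-1}$, i.e.\ $|f[X]|\,(\diam X)^{1-\alpha}\le T_2$. Hence the quantity $A$ appearing in Theorem~\ref{whitneythm} satisfies $A\le T_2$, and the theorem yields $F\in C^{1,\alpha}(\RR)$ with $F|_S=f$ and $\norm{F'}_\alpha\lesssim T_2$, the implied constant being universal since $k=1$ is fixed.

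Finally I would bound $\norm{F'}_\infty$ on $I$. Pick distinct $a,b\in S$; by the mean value theorem there is $\xi$ strictly between $a$ and $b$, hence $\xi\in I$ (as $I$ is an interval containing $a$ and $b$), with $F'(\xi)=\tfrac{f(b)-f(a)}{b-a}$, so $|F'(\xi)|\le T_1$ by the first hypothesis. Then for every $t\in I$,
\[
 |F'(t)|\le |F'(\xi)|+\norm{F'}_\alpha\,|t-\xi|^\alpha\le T_1+C T_2|I|^\alpha\lesssim T_1,
\]
where the last step uses $|I|\lesssim (T_1/T_2)^{1/\alpha}$; restricting $F$ to $I$ completes the argument. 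I do not expect a genuine obstacle here: the only points needing a little care are invoking the symmetry of divided differences (so the ordered representative of $X$ may be used to read off $\diam X$ in the denominator) and the bookkeeping that keeps every implied constant universal.
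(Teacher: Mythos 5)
Your argument is correct and follows essentially the same two-step structure as the paper's: invoke Whitney's extension theorem (Theorem~\ref{whitneythm}) with $k=1$ to obtain $\norm{F'}_\alpha\lesssim T_2$, then use the mean value theorem together with the length restriction $|I|\lesssim (T_1/T_2)^{1/\alpha}$ to upgrade a pointwise bound on $F'$ to $\norm{F'}_\infty\lesssim T_1$. The only difference from the paper's proof is cosmetic: the paper first rescales by $\lambda=(T_1/T_2)^{1/\alpha}$ so that both hypotheses carry the same constant $T_1\lambda$ before applying Whitney and then scales back, whereas you skip the rescaling and read off $A\le T_2$ directly from the divided-difference bound (using the symmetry of $f[\{x,y,z\}]$ so that the ordered representative may be used). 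You also handle the degenerate cases $\#S\le 2$ and the closure-in-$\RR$ requirement of Theorem~\ref{whitneythm} explicitly, which the paper omits; these additions are harmless and slightly improve the rigor.
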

    \begin{proof}
        Let $\lambda=(T_1/T_2)^{1/\alpha}$ and define $f_\lambda:\lambda^{-1}S\to\RR$ by $f_\lambda(x):=f(\lambda x)$ (where for a set $A$ and a scalar $t$ we let $tA:=\{ta \ \vert \ a\in A\}$). Our assumptions on $f$ imply that, for $x<y<z\in \lambda^{-1}S$,
        \begin{align}
            |f_\lambda(x)-f_\lambda(y)|&\le T_1\lambda\cdot |x-y| \qquad \text{ and }\label{whitneycorineq1}\\
            \left|\frac{f_\lambda(x)-f_\lambda(y)}{x-y}-\frac{f_\lambda(y)-f_\lambda(z)}{y-z}\right|&\le T_2\lambda^{1+\alpha}\cdot  \diam\{x,y,z\}^\alpha=T_1\lambda\cdot\diam\{x,y,z\}^\alpha.\label{whitneycorineq2}
        \end{align}
        Dividing inequality \eqref{whitneycorineq2} by $z-x$ we obtain
        \begin{equation*}
            |f_\lambda[\{x,y,z\}]|\le T_1\lambda\cdot\diam\{x,y,z\}^{\alpha-1}.
        \end{equation*}
        By Theorem \ref{whitneythm}, $f_\lambda$ admits an extension $F_\lambda:\lambda^{-1}I\to\RR$ which is $C^{1,\alpha}$ with $\norm{F_\lambda'}_\alpha\lesssim T_1\lambda$. \\
        By the mean value theorem and \eqref{whitneycorineq1}, there is a point $w\in \lambda^{-1}I$ satisfying $|F_\lambda'(w)|\lesssim T_1\lambda$.\\
        The function $F:I\to\RR$ defined by $F(x):=F_\lambda(\lambda^{-1}x)$ extends $f$ and satisfies $\norm{F'}_\alpha\lesssim T_1\lambda^{-\alpha}=T_2$ and $|F'(\lambda w)|\lesssim T_1$. Since $|I|\lesssim\lambda$, it follows that $\norm{F'}_\infty\lesssim T_1+T_2\cdot \lambda ^\alpha\lesssim T_1$.
    \end{proof}

\section{Coefficients of $C^{2,\alpha}$ metrics in polar coordinates}\label{coeffsec}

\medskip In this section we prove:

\Gthm*

Note that in this theorem, the coordinates $r,\theta$ denote the usual polar coordinates on $\RR^2$, which coincide with the polar normal coordinates of the metric $g$ given in \eqref{metricinpolarconst}.

 The implication $1 \implies 2$  follows immediately from the facts mentioned in subsection \ref{Riccatifacts}. 

Assume now that condition $2$ holds. We will show that $g = dr^2 + G^2 d\theta^2$ is a $C^{2,\alpha}$ metric with constant $\le 16 H$ on $D$. From Lemma \ref{strongconvlma} it will then follow that $(D,g)$ is strongly convex. 

In order to reveal the $C^{2,\alpha}$ regularity of $g$ we need to pass to isothermal coordinates. Let $$\tD : = B_{2R}(0).$$

\begin{proposition}\label{constructionppn}
    There is a $C^{2}$ function $u:{\tilde D}\to\RR$ such that $({D},g)$ is isometric to $({D},\tilde g)$, where $\tilde g$ is a metric on ${\tD}$ given by
    \begin{equation*}
        \tilde g:=e^{u}dz\overline{dz}=e^{u}(dx^2+dy^2).
    \end{equation*}
    Moreover, $$\norm{u}_\infty\lesssim 1.$$
\end{proposition}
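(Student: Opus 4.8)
The plan is to realise the isothermal chart by solving the Beltrami equation attached to $g$, exploiting that—by Corollary~\ref{Gcompcor}—the metric $g$ is uniformly comparable to the Euclidean metric on $D$, so that this Beltrami equation has a uniformly small coefficient and can be treated as a perturbation of the flat case $g=|dz|^2$, $u\equiv0$.

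First I would reduce to $R=1$. Condition~2 is covariant under $g\mapsto\lambda^2 g$ (both $\|K\|_\infty$ and $\|K\|_\alpha$ rescale consistently, cf. Remark~\ref{scalermk}); rescaling the source metric by $R^{-2}$ and simultaneously rescaling the model disc $\tilde D$, the two $\log R$ factors cancel, so a bound $\|u\|_\infty\lesssim1$ obtained at scale $1$ transfers back, and the image stays inside $B_{2R}(0)$. Assume then $R=1$, so $H<\pi^2/64$. Next I would write $g$ in the background coordinate $z=re^{i\theta}$: with $\kappa:=G/r$, a direct computation gives $g=\tfrac{(1+\kappa)^2}{4}\,\bigl|dz+\mu\,d\bar z\bigr|^2$, with Beltrami coefficient $\mu(z)=\tfrac{1-\kappa(z)}{1+\kappa(z)}\cdot\tfrac{z^2}{|z|^2}$. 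By Corollary~\ref{Gcompcor} one has $\kappa\in[\tfrac12,2]$ and $|1-\kappa(z)|\lesssim H|z|^2$, so $\mu$ vanishes at the origin and $\|\mu\|_\infty\lesssim H$. Moreover $\kappa$ is Lipschitz in $r$ with constant $\lesssim H$ (from $\partial_r\kappa=\kappa(h-1/r)$ and Corollary~\ref{fcor} with $K_0=0$) and $\alpha$-H\"older in $\theta$ with seminorm $\lesssim\|K\|_\alpha$ (from the integral equation $G=r-\int_0^r(r-s)KG\,ds$, Gr\"onwall, and condition~2(c), using that $g$-distances are comparable to Euclidean ones by Corollary~\ref{Gcompcor}); since the factor $\tfrac{1-\kappa}{1+\kappa}$ vanishes to second order at $0$ it absorbs the singularity of $z^2/|z|^2$, and altogether $\|\mu\|_{C^\alpha(\overline D)}\lesssim H+\|K\|_\alpha$, a small universal constant at scale $1$.

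Now I would solve $\Phi_{\bar z}=\mu\,\Phi_z$. After extending $\mu$ to a compactly supported $C^\alpha$ function on $\CC$ of comparable small norm—or, better, first extending $K$ to a slightly larger disc as in Lemma~\ref{metricextlma} so the chart is defined on $\overline D$ with room to spare—the measurable Riemann mapping theorem produces a homeomorphism $\Phi$, normalised by $\Phi(0)=0$ and $\Phi-\mathrm{id}\to0$ at infinity, which is $C^{1,\alpha}$ since $\mu$ is; and because $\|\mu\|_{C^\alpha}$ is small, the standard Neumann-series (equivalently, implicit function theorem) estimates give $\|\Phi-\mathrm{id}\|_{C^{1,\alpha}}\lesssim\|\mu\|_{C^\alpha}\ll1$. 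Then $\Omega:=\Phi(D)$ is a $C^{1,\alpha}$ domain close to $D$, hence $\Omega\subseteq\tilde D$, and post-composing with the Riemann map of $\Omega$—which is $C^1$-close to the identity because $\partial\Omega$ is $C^{1,\alpha}$-close to a circle—I may assume $\Omega=D$. From $\Phi^*(|dw|^2)=|\Phi_z|^2\,|dz+\mu\,d\bar z|^2$ and the formula for $g$, the pulled-back metric is $(\Phi^{-1})^*g=e^u|dw|^2$ with $u\circ\Phi=2\log\tfrac{1+\kappa}{2}-2\log|\Phi_z|$; since $\kappa\in[\tfrac12,2]$ and $|\Phi_z-1|\ll1$, this gives $\|u\|_\infty\lesssim1$ (indeed $\lesssim H$). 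Finally the pulled-back metric has Gauss curvature $K\circ\Phi^{-1}$, which is $C^\alpha$ (a $C^\alpha$ function composed with the $C^{1,\alpha}$ diffeomorphism $\Phi^{-1}$), so $u$ solves $\Delta u=-2(K\circ\Phi^{-1})e^u$ with $C^\alpha$ right-hand side; interior Schauder estimates then give $u\in C^{2,\alpha}$ up to $\partial D$ (this is where the extra room is used), and I extend $u$ to a $C^2$ function on $\tilde D$ by a fixed bounded extension operator and rescale back.

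The main obstacle is the uniform bound $\|u\|_\infty\lesssim1$: isothermal coordinates exist for essentially any metric, but the conformal factor is a priori controlled only up to the—completely uncontrolled—normalisation of the chart (rescalings and M\"obius motions can make it arbitrarily large). What rescues the bound is the combination of the scaling reduction with the hypothesis $HR^2<\pi^2/64$, which puts the problem in the perturbative regime around $g=|dz|^2$: the Beltrami coefficient is then small in $C^\alpha$, the chart is $C^1$-close to the identity, and the conformal factor is correspondingly small. The most delicate bookkeeping is propagating smallness and regularity from $K$ through the integral equation for $G$ to $\kappa$ and then to $\mu$, and controlling the shape of $\Omega=\Phi(D)$ precisely enough to carry out the Riemann-map normalisation.
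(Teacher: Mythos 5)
Your proof is correct in its essentials, and it takes a genuinely different route from the paper. The paper's argument runs through an approximation scheme: it mollifies the curvature to get a sequence of smooth metrics $g_n=dr^2+G_n^2d\theta^2$, invokes existence of isothermal charts $\varphi_n,u_n$ for each, obtains the crucial uniform bound $\sup|u_n|\lesssim 1$ from Eilat's theorem (Theorem \ref{matanthm}), upgrades it to $C^{2,\beta}$ bounds via Liouville's equation and interior Schauder estimates, passes to a limit by Arzel\`a--Ascoli, and then separately proves that the charts $\varphi_n$ converge to an isometry by showing the induced distance functions $d_{g_n}$, $d_{\tilde g_n}$ converge uniformly (this last step is the most laborious part of the paper's proof). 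Your argument instead attacks the problem in one stroke by writing the metric explicitly as $g=\tfrac{(1+\kappa)^2}{4}|dz+\mu\,d\bar z|^2$ with $\mu=\tfrac{1-\kappa}{1+\kappa}\cdot z^2/|z|^2$, verifying that the hypotheses put $\mu$ in the perturbative regime $\|\mu\|_{C^\alpha}\lesssim HR^2\ll 1$, and solving the Beltrami equation. What this buys you: no dependence on Eilat's theorem (a nontrivial external input), and the $C^0$ bound on $u$ comes out of explicit closeness of the chart to the identity rather than an a priori estimate for the conformal factor. What it costs: you need the quantitative $C^{1,\alpha}$ solvability theory for the Beltrami equation (Neumann series for the Ahlfors--Beurling transform), the Kellogg--Warschawski stability of Riemann maps of nearly circular $C^{1,\alpha}$ domains under a suitable normalisation, and a careful check that the $C^\alpha$ seminorm of $\mu$ survives the scale-dependent singularity of $z^2/|z|^2$ at the origin — you handle the last point correctly by noting $\nu=\tfrac{1-\kappa}{1+\kappa}$ vanishes to second order there. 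A notable side-benefit of your route is that it handles the non-smoothness of $G$ in $\theta$ automatically, since $C^\alpha$ Beltrami data directly yields a $C^{1,\alpha}$ chart, whereas the paper must first regularise $K$ and then pass to the limit. One small bookkeeping remark: your final bound $\|u\|_\infty\lesssim HR^2$ is actually sharper than the $\lesssim 1$ the proposition asserts, which is consistent and fine, but note that the implied constant in $\|\mu\|_\infty\lesssim HR^2$ must be checked to keep $\|\mu\|_\infty<1$; since $HR^2<\pi^2/64$ gives $\kappa\in[\sin(\pi/8)/(\pi/8),\sinh(\pi/8)/(\pi/8)]$, this does hold comfortably.
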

\begin{proof}
	First, we may extend the function $K$ to the disk $\tD$, maintaining the estimates $\norm{K}_\infty \le H$ and $\norm{K}_\alpha \le H^{1 + \alpha/2}$. By solving the Jacobi equation \eqref{jacobieq}, \eqref{Ginitialcond}, we obtain an extension of $G$ to $\tD$ with properties 1 - 3 in the hypothesis of the theorem.

    Let $K_n:\tD \to\RR$ be a sequence of smooth functions satisfying
    \begin{itemize}
        \item $\norm{K_n}_\infty\le H$ and $\norm{K}_\alpha \le H^{1 + \alpha/2}$,
        \item $\norm{K_n-K}_\infty\xrightarrow{n\to\infty}0$ on ${D}$,
        \item $K_n\vert_{B_{1/n}(0)}\equiv K(0)$.
    \end{itemize}  
	Such a sequence can be obtained using mollifiers. Let $G_n:{\tD}\setminus\{0\}\to\RR$ be the solution to the Jacobi equation \eqref{jacobieq}, \eqref{Ginitialcond} with $K$ replaced by $K_n$. Then $G_n$ is smooth, and positive by the assumption $HR^2 \le \pi ^2 / 64$ and Corollary \ref{Gcompcor}. Thus
    $$g_n:=dr^2+G_n^2d\theta^2$$
    is a well-defined smooth metric on $\tD$, whose curvature is $K_n$ and is constant near the origin. For each $n$ we now introduce an isothermal coordinate chart on $(\tD,g_n)$, consisting of a diffeomorphism  $\varphi_n:\tD\to\tD$ and a smooth function $u_n:\tD\to\RR$ such that $\varphi_n(0)=0$, $\varphi(\partial \tilde D) = \partial \tilde D$, and
    \begin{equation*}
        \tilde g_n:=\varphi_n^*g_n=e^{u_n}dz\overline{dz}
    \end{equation*}
    (the asterisk denotes pullback). Such a coordinate transformation always exists (see e.g. \cite{KD}).
    
    \begin{lemma}\label{unlma} The functions $u_n$ satisfy
    \begin{align}
        \sup_{\tD}|u_n|&\lesssim 1\qquad \text{ and }\label{uninfty}\\
        \norm{u_n}_{C^{2,\beta}({D})}& \le C(H,R,\beta) \quad \text{ for all } \beta<\alpha.\label{unc2alpha}
    \end{align}
    \end{lemma}
    \begin{proof}
        The estimate \eqref{uninfty} is an immediate consequence of the following theorem by Eilat \cite{matan}:
        
        \begin{theorem}[Eilat]\label{matanthm}
            Let $H,R>0$ satisfy $HR^2<\pi^2/8$. Let $(M,g)$ be a complete Riemannian surface with injectivity radius $\mathrm{inj}(M)>2R$ and curvature $K$ satisfying $|K|\le H$, and let $p\in M$. Suppose that the metric $g$ has the expression  $e^{u}dz\overline{dz}$ in an isothermal coordinate chart $z:\overline{B_R(p)}\to\overline{B_R(0)}$ satisfying $z(p)=0$ and $z(\partial B_R(p))=\partial B_R(0)$. Then 
            \begin{equation}\label{matanestimate}
                \norm{u}_\infty\lesssim1.
            \end{equation}
        \end{theorem}     
         (the completeness and injectivity radius requirements do not pose a problem: thanks to Lemma \ref{metricextlma}, we may extend $(\tilde D, \tilde g_n)$ to complete surfaces with these properties). To obtain \eqref{unc2alpha}, we recall Liouville's equation:
        \begin{equation}\label{liouville}
            \Delta u_n=-2\cdot (K_n\circ \varphi_n)\cdot e^{u_n}.
        \end{equation}
        and make use of the following elliptic regularity estimates (see \cite{GT}, Theorems 3.9 and 4.8):
        \begin{lemma}
            Let $0<\beta<\alpha$ and $0<R_1<R_2$ and let $\Omega_j=B_{R_j}(0)$.  Let $u\in C^2(\Omega_2)$. Then

            \begin{align}
            \sup_{\Omega_1}|\nabla u|&\lesssim\sup_{\Omega_2}|u|+\sup_{\Omega_2}|\Delta u|\label{poissonestimate1}\\
             \norm{u}_{C^{2,\beta}(\Omega_1)}&\lesssim\sup_{\Omega_2}|u|+\norm{\Delta u}_{C^{0,\beta}(\Omega_2)}\label{poissonestimate2}
            \end{align}
            with the implied constants depending on $R_1,R_2$ and $\beta$.
        \end{lemma}
        The implied constants in the rest of this proof may depend on $\beta, H$ and $R$. From \eqref{liouville}, \eqref{uninfty} and the fact that $|K_n|\le H$, it follows that $|\Delta u_n|\lesssim 1$. Let ${D}'':=B_{1.5 R}(0)$. The estimate \eqref{poissonestimate1} then implies that $\sup_{{D}''}|\nabla  u_n|\lesssim 1$, whence $\norm{u_n}_{C^{0,\beta}({D}'')}\lesssim1$. Now \eqref{liouville}, \eqref{uninfty} and the fact that $\norm{K_n}_\alpha\lesssim H^{1+\alpha/2}$ imply that $\norm{\Delta u_n}_{C^{0,\beta}({D}'')}\lesssim 1$, so \eqref{unc2alpha} follows from \eqref{poissonestimate2}.
    \end{proof}

    By Lemma \ref{unlma} and the Arzel{\`a}-Ascoli theorem, we may assume that $u_n\to u$ uniformly on ${D}$ for some $u\in C^2$. Set
    \begin{equation*}
        \tilde g:=e^{u}dz\overline{dz}.
    \end{equation*}
    We finish the proof of Proposition \ref{constructionppn} by proving that the sequence $\varphi_n$ converges uniformly to an isometry $$\varphi:({D},\tilde  g)\to({D},g).$$
    
    \begin{claim*}
        The distance functions $d_{g_n}$ and $d_{\tilde g_n}$, corresponding to the metrics $g_n$ and $\tilde g_n$, converge uniformly on ${D}\times {D}$ to $d_g$ and $d_{\tilde g}$, respectively.
    \end{claim*}
    \begin{proof}
    Let $\eps>0$ and let $n\in\NN$ be so large that $\sup_{{D}}|K-K_n|<\eps$. By Lemma \ref{riccatilemma}, the functions $h:=\partial_rG/G$ and $h_n:=\partial_rG_n/G$ satisfy:
    \begin{equation}
       \sup_{D}|h-h_n| \lesssim \eps r \qquad \text{ on } B_r(0) \text{ for all } 0<r<2R.
    \end{equation}
    Therefore, for all $\theta\in S^1$ and $0<r<2R$,
    \begin{equation}
        \frac{G(re^{i\theta})}{G_n(re^{i\theta})}=\exp\left(\int_0^r(h-h_n)(se^{i\theta})ds\right)= \exp(O(\eps r^2))=1+O(\eps r^2).
    \end{equation}
    By Corollary \ref{Gcompcor}, if $n$ is large enough then $G_n\sim G \sim r$ whence
    \begin{equation}
        G^2-G_n^2=G_n^2((G/G_n)^2-1)\lesssim \eps r^4.
    \end{equation}
    It follows that for every tangent vector  $v=v^\theta\partial_\theta+v^r\partial_r\in T({D}\setminus\{0\})$,
    \begin{equation}
        |v|_g^2-|v|_{g_n}^2=(v^r)^2+G^2(v^\theta)^2-(v^r)^2-G_n^2(v^\theta)^2\lesssim \eps r^4(v^\theta)^2\le \eps r^2|v|^2,
    \end{equation}
        where $|v|_g,|v|_{g_n}$ and $|v|$ denote the norm of $v$ with respect to $g$, $g_n$ and the Euclidean metric, respectively. Thus 
    \begin{equation*}
        |v|_g-|v|_{g_n}\lesssim \eps r^2|v|.
    \end{equation*}
    For any $\gamma:[0,1]\to{D}$, we then have that 
    \begin{equation}\label{lengminuslengn}
        \Len_g(\gamma)-\Len_{g_n}(\gamma)\le\max_{[0,1]}(|\dot\gamma|_g-|\dot\gamma|_{g_n})\lesssim \eps R^2\cdot \max_{[0,1]}|\dot\gamma|.    
    \end{equation}
    Now, since $G\sim r \sim G_n$,  there is a universal constant $Z>0$ such that for every $p,q\in{D}$,
    \begin{equation*}
    \begin{split}
        d_{g}(p,q) & =\inf\{\Len_{g}(\gamma) \ \vert \ \gamma:[0,1]\to{D}, \gamma(0)=p, \gamma(1)=q, |\dot\gamma|\le Z\} \text{ and } \\
        d_{g_n}(p,q) & =\inf\{\Len_{g_n}(\gamma) \ \vert \ \gamma:[0,1]\to{D}, \gamma(0)=p, \gamma(1)=q, |\dot\gamma|\le Z\}
    \end{split}
    \end{equation*}
    (in words: to measure the distances in $g$ and $g_n$ between $p$ and $q$, it suffices to consider curves whose tangents are no larger than $Z$ in Euclidean norm).

    By \eqref{lengminuslengn}, these two infima differ by at most $CR^2Z\eps$, where $C$ is a universal constant. This proves uniform convergence of $d_{g_n}$ to $d_g$. Convergence of $d_{\tilde g_n}$ to $d_{\tilde g}$ is similar: by uniform convergence of $u_n$ to $u$, we have $|v|_{\tilde g_n}\to|v|_{\tilde g}$ uniformly in $v$, where $v$ is a tangent vector with Euclidean norm bounded by some universal constant; the metrics $\tilde g_n$ and $\tilde g$ are all comparable to the Euclidean metric by a universal factor because of \eqref{uninfty}; and the rest of the argument is analogous. The claim is proved.
    \end{proof}
    The metrics $d_g,d_{g_n},d_{\tilde g}$ and $d_{\tilde g_n}$ are all comparable to the Euclidean metric by a universal factor, and therefore the functions $\varphi_n$, which are isometries from $({D},\tilde g_n)$ to $({D} ,g_n)$, are uniformly bi-Lipschitz with respect to the Euclidean metric. By the Arzel{\`a}-Ascoli theorem, (a subsequence of) $\varphi_n$ converges uniformly to a function $\varphi:{D}\to{D}$, and by the claim, for all $p,q\in{D}$,
    \begin{equation}
        d_g(p,q)=\lim_nd_{g_n}(p,q)=\lim_nd_{\tilde g_n}(\varphi_n(p),\varphi_n(q))=d_{\tilde g}(\varphi(p),\varphi(q)).
    \end{equation}
    This proves that $\varphi:({D},\tilde g)\to({D},g)$ is an isometry. Proposition \ref{constructionppn} is proved. 
\end{proof}
    Proposition \ref{constructionppn} shows that $g$ is a $C^2$ Riemannian metric on $D$. It follows from  \eqref{jacobieq} that the curvature of $g$ is $K$. By $(c)$, we have that  $\norm{K}_\infty\le H$, and $\norm{K}_\alpha\le H^{1+\alpha/2}$ with respect to the Euclidean metric on ${D}$. By Corollary \ref{Gcompcor}, $G\sim r$ on ${D}$, so the Euclidean metric on ${D}$ is equivalent, up to a universal factor, to the metric $d_g$ induced by $g$, whence $\norm{K}_\alpha\lesssim H^{1+\alpha/2}$  with respect to the metric $d_g$. Theorem \ref{Gthm} is proved.

\section{Properties of distance functions on $C^{2,\alpha}$ surfaces}\label{distfuncsec}

In this section, we fix $0<\alpha\le1$ and $H,R>0$ satisfying $HR^2 < \pi^2 / 16$,  and a $C^{2,\alpha}$ metric $g$ with constant $\le H$ on the disc 
$$D = B_R(0) \subseteq \RR^2,$$
 given in polar normal coordinates about 0 by the formula
$$g = dr^2 + G^2 d\theta^2.$$
Again we set
\begin{equation}\label{hdef}h = \frac{\partial_r G}{G}.\end{equation}
We also fix a unit - speed geodesic $\gamma:I \to D$, defined on an open, bounded interval $I \subseteq \RR$, which is non-radial, i.e. not of the form $\theta = \mathrm{const}$. By uniqueness of geodesics with given initial conditions, this implies that $|\rho ' (t)| < 1$ for all $t \in I$. Denote
 $$\rho(t) = d_g(\gamma(t) , 0) = r(\gamma(t)), \qquad t \in I.$$

This section is devoted to proving various properties of the function $\rho$.

\begin{lemma}\label{rhoddotlma}
	We have
	\begin{equation}\label{rhoddotrhooveroneminusdotrhosquared}
		\frac{\rho\ddot\rho}{1- \dot\rho^2} \sim 1.
	\end{equation}
	In particular, $\ddot \rho > 0$.
\end{lemma}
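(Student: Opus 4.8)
The plan is to bound the quantity $\rho\ddot\rho/(1-\dot\rho^2)$ above and below by universal constants. The starting point is the geodesic equation \eqref{geoeq} from Lemma \ref{geoeqlemma}, which gives $\ddot\rho = h(\gamma(t))(1-\dot\rho^2)$, so that
\[
\frac{\rho\ddot\rho}{1-\dot\rho^2} = \rho(t)\, h(\gamma(t)) = r(\gamma(t))\, h(\gamma(t)).
\]
Thus the statement reduces to showing that the function $rh$ is bounded above and below by universal positive constants on the punctured disc $D\setminus\{0\}$. This is exactly the kind of estimate supplied by the Riccati comparison principle: along each radial ray $\theta=\mathrm{const}$, the function $h$ solves the Riccati equation \eqref{riccatieq} with the initial condition \eqref{riccatiinit} and with $|K|\le H$, so Lemma \ref{riccaticomparison} gives $\cot_H x \le h(x) \le \cot_{-H} x$ for $x\in(0,R]$, i.e.
\[
\Phi(Hx^2) = x\cot_H x \le x\, h(x) \le x\cot_{-H} x = \Phi(-Hx^2),
\]
using the identity $x\cot_K x = \Phi(Kx^2)$ recorded after \eqref{Phidef}. (One must note the hypothesis of this section is $HR^2 < \pi^2/16 < \pi^2/4$, so Lemma \ref{riccaticomparison} applies.)

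It then remains to observe that $\Phi$ is continuous, strictly decreasing and positive on the interval $(-\infty,\pi^2/4]$, with $\Phi(0)=1$; hence on $[-HR^2,HR^2]\subseteq[-\pi^2/16,\pi^2/16]$ we have $\Phi(HR^2)\le \Phi(Hx^2)$ and $\Phi(-Hx^2)\le \Phi(-HR^2)$, both quantities being universal constants bounded away from $0$ and $\infty$ (concretely, $\Phi(-\pi^2/16)$ above and $\Phi(\pi^2/16)>0$ below — the bound $|\Phi(t)-1|\le 4|t|/\pi^2$ used in the proof of Lemma \ref{riccatilemma} makes this quantitative). Evaluating at $x=\rho(t)=r(\gamma(t))\in(0,R]$ yields $r(\gamma(t))\,h(\gamma(t))\sim 1$, which is \eqref{rhoddotrhooveroneminusdotrhosquared}. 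Since $1-\dot\rho^2>0$ (as $|\dot\rho|<1$ for the non-radial geodesic $\gamma$, noted at the start of the section) and $\rho>0$, the relation forces $\ddot\rho>0$, giving the final assertion.

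The argument is essentially immediate once the geodesic equation converts $\rho\ddot\rho/(1-\dot\rho^2)$ into $rh$; there is no real obstacle. The only points requiring a little care are: (i) checking that $|\dot\rho|<1$ so that \eqref{geoeq} is available and the denominator $1-\dot\rho^2$ is positive — but this is precisely what was recorded for non-radial geodesics at the start of Section \ref{distfuncsec}; and (ii) confirming that the curvature bound $HR^2<\pi^2/16$ keeps the arguments of $\Phi$ inside the region where $\Phi$ is positive and comparable to $1$, so that the implied constants are genuinely universal rather than depending on $H$ or $R$.
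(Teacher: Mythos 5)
Your proposal is correct and takes essentially the same approach as the paper: convert $\rho\ddot\rho/(1-\dot\rho^2)$ to $\rho\cdot h\circ\gamma$ via the geodesic equation \eqref{geoeq}, bound $xh(x)$ between $\Phi(Hx^2)$ and $\Phi(-Hx^2)$ by the Riccati comparison (Lemma \ref{riccaticomparison}), and use the monotonicity of $\Phi$ together with $HR^2 < \pi^2/16$ to get universal two-sided bounds. The paper simply writes the bound $1/2 \le \Phi(\pi^2/16)$ and $\Phi(-\pi^2/16) \le 2$ explicitly, but the argument is identical.
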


\begin{proof}
    Since $h$ satisfies the Riccati equation \eqref{riccatieq},\eqref{riccatiinit} on radial geodesics, Lemma \ref{riccaticomparison} implies that
    \begin{equation*}
        \Phi(H\rho(t)^2)=\rho(t)\cot_{H}(\rho(t))\le \rho(t)h(\gamma(t))\le \rho(t)\cot_{-H}(\rho(t))=\Phi(-H\rho(t)^2),
    \end{equation*}
    so by \eqref{geoeq} and monotonicity of $\Phi$,
    \begin{equation*}
        1/2\le\Phi(\pi^2/16)\le\rho(t)\rho''(t)/(1-\rho'(t)^2)\le\Phi(-\pi^2/16)\le 2.
    \end{equation*}
\end{proof}

The estimate \eqref{rhoddotrhooveroneminusdotrhosquared} implies that
\begin{equation}\label{logderivest}
	- \frac{d}{dt}\log ( 1 - \rho'(t)^2) = \frac{\rho'(t) \rho''(t)}{1 - \rho'(t)^2} \sim \frac{\rho'(t)}{\rho(t)} = \frac{d}{dt}\log \rho(t).
\end{equation}

This gives us the following relationship between the change in the $r$ and $\theta$ coordinates of $\gamma$:

\begin{lemma}\label{phivarylma}
	Let $\phi(t) = \theta(\gamma(t))$. Then for every subinterval $J \subseteq I$,
	\begin{equation}\label{phivary}
		\max_{t_1,t_2 \in J}\frac{\phi'(t_1)}{\phi'(t_2)} \le C  \left(\max_{t_1,t_2 \in J}\frac{\rho(t_1)}{\rho(t_2)}\right)^{C'},
	\end{equation}
	where $C,C'$ are positive universal constants.
\end{lemma}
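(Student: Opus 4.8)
The plan is to relate the rate of change of $\phi=\theta\circ\gamma$ to the rate of change of $\rho$ via the first integral of the geodesic equation (Clairaut's relation), and then to integrate the resulting logarithmic-derivative comparison, much as in \eqref{logderivest}.

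First I would recall that in polar normal coordinates $g=dr^2+G^2d\theta^2$, a unit-speed geodesic satisfies the Clairaut relation: the quantity $G^2\dot\phi$ is constant along $\gamma$ (this is the conservation law associated to the Killing-like structure of the $\theta$-coordinate in these coordinates, or can be read off directly from the geodesic equations). Since $\gamma$ is non-radial, this constant is nonzero, so $\dot\phi(t)=c/G(\gamma(t))^2$ for some fixed $c\ne 0$; in particular $\dot\phi$ never vanishes and has constant sign. Hence for $t_1,t_2\in J$,
\[
	\frac{\phi'(t_1)}{\phi'(t_2)}=\frac{G(\gamma(t_2))^2}{G(\gamma(t_1))^2}.
\]
Now by Corollary \ref{Gcompcor} (using $HR^2<\pi^2/16<\pi^2/4$), we have $G(re^{i\theta})\sim r$ uniformly, so $G(\gamma(t_j))\sim\rho(t_j)$, and therefore
\[
	\frac{\phi'(t_1)}{\phi'(t_2)}=\frac{G(\gamma(t_2))^2}{G(\gamma(t_1))^2}\lesssim\frac{\rho(t_2)^2}{\rho(t_1)^2}\le\left(\max_{s_1,s_2\in J}\frac{\rho(s_1)}{\rho(s_2)}\right)^{2},
\]
which already gives \eqref{phivary} with $C'=2$ and a universal $C$. (Alternatively, without invoking Clairaut's relation, one can use the unit-speed condition $\dot\rho^2+G^2\dot\phi^2=1$ to write $|\dot\phi|=\sqrt{1-\dot\rho^2}/G\sim\sqrt{1-\dot\rho^2}/\rho$, and then combine \eqref{logderivest} — which controls how $1-\dot\rho^2$ and $\rho$ vary together — with the $G\sim r$ estimate; this reproduces the same bound, possibly with a different universal exponent, after integrating the inequality in \eqref{logderivest} over $J$.)

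I expect the only real point requiring care is justifying that $G^2\dot\phi$ is a first integral (equivalently, that $\dot\phi$ has constant sign), since $G$ is merely $C^\alpha$ in $\theta$; but the geodesic equation \eqref{geoeq} only ever uses $h=\partial_rG/G$, which is $C^{2}$ in $r$, and the $\theta$-component of the geodesic equation is exactly $\frac{d}{dt}(G^2\dot\phi)=0$, which makes sense because $\gamma$ is $C^{2}$ as a curve (being a geodesic) and $G\circ\gamma$ is absolutely continuous along $\gamma$. Alternatively one avoids this entirely by using the $G\sim r$ comparison together with $|\dot\phi|=\sqrt{1-\dot\rho^2}/G$ and integrating \eqref{logderivest}: since $-\frac{d}{dt}\log(1-\dot\rho^2)\sim\frac{d}{dt}\log\rho$ on $J$, integrating from $t_2$ to $t_1$ shows $\log\frac{1-\dot\rho(t_1)^2}{1-\dot\rho(t_2)^2}$ is comparable, up to the universal constant from \eqref{logderivest}, to $-C''\log\frac{\rho(t_1)}{\rho(t_2)}$ in absolute value when $\rho(t_1)\le\rho(t_2)$, and combining with $G\sim\rho$ yields \eqref{phivary}. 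Either route is short; I would present the Clairaut argument as the main one since it is the cleanest, and it makes the constant $C'$ explicit.
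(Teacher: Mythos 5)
Your main argument via Clairaut's relation is incorrect, and the error is conceptual rather than technical. Clairaut's relation ($G^2\dot\phi$ constant along geodesics) holds only when the metric has a rotational symmetry, i.e.\ when $G$ depends on $r$ alone. In polar normal coordinates on a general surface, $G = G(r,\theta)$ depends on $\theta$ as well (its Taylor expansion in $r$ involves angular derivatives of the curvature), and the $\theta$-component of the geodesic equation is
\[
\frac{d}{dt}\bigl(G^2\dot\phi\bigr)\;=\;G\,(\partial_\theta G)\,\dot\phi^2,
\]
not zero. You write that "the $\theta$-component of the geodesic equation is exactly $\frac{d}{dt}(G^2\dot\phi)=0$'' — this is false unless $\partial_\theta G\equiv 0$, which is not the case here. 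So $\dot\phi$ is \emph{not} simply $c/G^2$, and the clean identity $\phi'(t_1)/\phi'(t_2)=G(\gamma(t_2))^2/G(\gamma(t_1))^2$ does not hold. (The sign claim you derive from it — that $\dot\phi$ has constant sign — is still true, but for a different reason: the unit-speed condition gives $|\dot\phi|=\sqrt{1-\dot\rho^2}/G$, and since $\gamma$ is non-radial, $|\dot\rho|<1$ everywhere, so $\dot\phi$ never vanishes.)

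Your alternative route is the right one and is essentially the paper's proof: use $\phi'=\sqrt{1-\dot\rho^2}/G\sim\sqrt{1-\dot\rho^2}/\rho=:\omega$ via Corollary~\ref{Gcompcor}, then control $\frac{d}{dt}\log\omega$ by $\frac{d}{dt}\log\rho$ via \eqref{logderivest}, and integrate. One small imprecision to fix: you integrate "from $t_2$ to $t_1$'' and compare $\log\frac{1-\dot\rho(t_1)^2}{1-\dot\rho(t_2)^2}$ directly with $\log\frac{\rho(t_1)}{\rho(t_2)}$, but since $\rho$ need not be monotone on $[t_1,t_2]$, the bound you need is on the \emph{total variation} of $\log\rho$, not the endpoint difference. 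The fix is to split $[t_1,t_2]$ at the unique minimum $t_0$ of $\rho$ (which exists by convexity, from Lemma~\ref{rhoddotlma}); on each half $\rho$ is monotone, and both $\bigl|\log\frac{\rho(t_1)}{\rho(t_0)}\bigr|$ and $\bigl|\log\frac{\rho(t_0)}{\rho(t_2)}\bigr|$ are bounded by $\log M$ where $M=\max_{J}\rho/\min_J\rho$. You should present this route as the proof, not the Clairaut one.
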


\begin{proof}
    Let $$ M = \max_{t_1,t_2 \in J}\frac{\rho(t_1)}{\rho(t_2)}.$$
	Since $\gamma$ is unit speed, $\phi' =\sqrt{1-\dot\rho^2}/G$ (we may assume that $\phi' >0$). Hence by Corollary \ref{Gcompcor}, $$\phi' \sim\sqrt{1-\dot\rho^2}/\rho=:\omega.$$
    It therefore suffices to prove \eqref{phivary} with $\phi'$ replaced by $\omega$. By \eqref{logderivest},
    \begin{equation}\label{dlogomega}
        \left|\frac{d}{dt}\log\omega(t)\right|=\left|\frac{1}{2}\frac{d}{dt}\log(1-\rho'(t)^2)-\frac{d}{dt}\log\rho(t)\right|\lesssim\left|\frac{d}{dt}\log\rho(t)\right| \qquad t\in I.
    \end{equation}
    Let $J \subseteq I$ be an interval and let $t_1 < t_2 \in J$. Since $\rho$ is convex, it attains a unique minimum on $[t_1, t_2]$ at some $t_0 \in J$ (possibly $t_1$ or $t_2$) and is monotone on the intervals $[t_1,t_0]$ and $[t_0,t_2]$. Therefore by \eqref{dlogomega},
    \begin{align*}
        \log\frac{\omega(t_1)}{\omega(t_2)} & \lesssim  \int_{[t_1,t_0]}\left|\frac{d}{ds} \log\rho(s)   \right| ds + \int_{[t_0,t_2]} \left| \frac{d}{ds} \log\rho(s) \right|  ds \\
	 & =  \left|\int_{[t_1,t_0]}\frac{d}{ds} \log\rho(s)  ds \right| + \left| \int_{[t_0,t_2]}\frac{d}{ds} \log\rho(s)  ds \right|\\
	& = \left| \log \frac{\rho(t_1)}{\rho(t_0)} \right| + \left| \log \frac{\rho(t_0)}{\rho(t_2)} \right|
	\lesssim \log M,
\end{align*}
    which implies \eqref{phivary}.
\end{proof}

Define $\kappa : I \to \RR$ by
\begin{equation}\label{kappadef}
	\kappa := \frac{1}{\rho^2} \cdot \Phi^{-1}\left(\frac{\rho\ddot\rho}{1-\dot\rho^2}\right).
\end{equation}

where $\Phi$ is defined in \eqref{Phidef}. As the next lemma asserts, $\kappa(t)$ equals the Gauss curvature at some point between $0$ and $\gamma(t)$.

\medskip Let  $[x,y]$ denote the geodesic segment joining two points $x$ and $y$.

\begin{lemma}\label{kappaivt}
	For every $t\in I$ there exists $q \in [0,\gamma(t)]$ such that $K(q) = \kappa(t).$
\end{lemma}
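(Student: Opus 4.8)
The strategy is an intermediate-value argument. Along the radial geodesic $[0,\gamma(t)]$, parametrized by arclength $s \in (0,\rho(t)]$, the function $h$ solves the Riccati equation $\partial_s h + h^2 + K = 0$ with $h(s) = 1/s + O(s)$, where $K$ here denotes the curvature restricted to this radial geodesic. Consider the two extreme cases: if $K \equiv c$ were constant along this segment, then $h(\rho(t)) = \cot_c \rho(t)$, i.e. $\rho(t) h(\gamma(t)) = \Phi(c\rho(t)^2)$, which by \eqref{geoeq} would force $\rho\ddot\rho/(1-\dot\rho^2) = \Phi(c\rho^2)$, i.e. $\kappa(t) = c$. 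The idea is that by the Riccati comparison principle (Lemma \ref{riccaticomparison}), replacing $K$ on the segment by the constant $K_{\max} := \max_{q \in [0,\gamma(t)]} K(q)$ can only decrease $h(\rho(t))$, and replacing it by $K_{\min} := \min_{q \in [0,\gamma(t)]} K(q)$ can only increase it; hence $\Phi(K_{\max}\rho(t)^2) \le \rho(t) h(\gamma(t)) \le \Phi(K_{\min}\rho(t)^2)$. Since $\rho(t) h(\gamma(t)) = \rho\ddot\rho/(1-\dot\rho^2) = \Phi(\kappa(t)\rho(t)^2)$ by definition of $\kappa$ in \eqref{kappadef}, and $\Phi$ is strictly decreasing, we conclude $K_{\min} \le \kappa(t) \le K_{\max}$.

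From there, the conclusion is immediate: $K$ is continuous, the geodesic segment $[0,\gamma(t)]$ is connected, so $K$ restricted to it attains every value between $K_{\min}$ and $K_{\max}$; in particular it attains $\kappa(t)$ at some $q \in [0,\gamma(t)]$. The only technical care needed is at the endpoint $s \to 0$: the comparison principle as stated in Lemma \ref{riccaticomparison} applies to solutions with the initial behavior $1/x + O(x)$, which is exactly \eqref{riccatiinit}; and one should note that $K_{\max} \rho(t)^2 \le H R^2 < \pi^2/16 < \pi^2/4$ so $\Phi$ is evaluated inside its domain and the hypotheses of Lemma \ref{riccaticomparison} (with $H$ replaced by $|K_{\min}|, |K_{\max}|$) are met.

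The main obstacle — really the only point requiring thought — is justifying the monotone dependence of the Riccati solution at the right endpoint on the coefficient $K$, uniformly down to $s = 0$ where $h$ blows up. This is handled by applying Lemma \ref{riccaticomparison} directly: one compares the actual solution $h$ along $[0,\gamma(t)]$ with the solutions $\cot_{K_{\max}}(\cdot)$ and $\cot_{K_{\min}}(\cdot)$ of the constant-coefficient equations, all of which share the same $1/s + O(s)$ initial condition, so that the comparison propagates from $s = 0^+$ to $s = \rho(t)$ without boundary terms. Everything else is bookkeeping with the monotonicity of $\Phi$ and the identity $x\cot_K x = \Phi(Kx^2)$ recorded in \S\ref{Riccatifacts}.
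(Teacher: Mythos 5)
Your proof is correct and follows essentially the same route as the paper: rewrite the definition of $\kappa$ via the geodesic equation as $\cot_{\kappa(t)}\rho(t)=h(\gamma(t))$, use Riccati comparison along the radial segment to sandwich $\kappa(t)$ between $\min_{[0,\gamma(t)]}K$ and $\max_{[0,\gamma(t)]}K$, and conclude by continuity of $K$. The only small caveat (which applies equally to the paper's wording) is that the inequality $\cot_{K_{\max}}\rho(t)\le h(\gamma(t))\le\cot_{K_{\min}}\rho(t)$ requires the general monotone Riccati comparison principle from the cited source rather than Lemma~\ref{riccaticomparison} as literally stated, which only gives the symmetric bounds $\cot_{\pm H}$ with $H=\norm{K}_\infty$.
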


\begin{proof}
    By \eqref{geoeq}, the formula \eqref{kappadef} is equivalent to
    \begin{equation*}
        \Phi(\kappa(t)\rho(t)^2)=\rho(t)h(\gamma(t)),
    \end{equation*}
    which by the definitions of $\Phi$ and $\cot_{K}$ is equivalent to
    \begin{equation*}
        \cot_{\kappa(t)}\rho(t)=h(\gamma(t)).
    \end{equation*}
    Since $h$ satisfies the Riccati equation \eqref{riccatieq},\eqref{riccatiinit} on the geodesic segment $[0,\gamma(t)]$, it follows from Lemma \ref{riccaticomparison} that $K$ cannot be strictly smaller nor strictly larger than $\kappa(t)$ on the entire segment; the Lemma now follows from continuity of $K$.
\end{proof}

From Lemma \ref{kappaivt} and the assumptions $|K| \le H$, we immediately get that

\begin{equation}\label{kappaest}
	 |\kappa| \le H \qquad \text{ on $I$. }
%	and for every $t_1, t_2 \in I$,
%	\begin{equation*}
%		|\kappa(t_1) - \kappa(t_2)| \lesssim H^{1+\alpha/2}\max\{\rho(t_1), \rho(t_2) \}^\alpha.
%	\end{equation*}
\end{equation}

Set 
\begin{align}
	t_0 & := \mathrm{argmin}\rho,\\
	K_0 & := \kappa(t_0), \label{K0def}\\
	\phi_0(t) & := \int\limits_{t_0}^t\frac{\sqrt{1 - \rho'(s)^2}}{\sin_{K_0}\rho(s)}ds \qquad t \in I. \label{phi0def}
\end{align}

The motivation behind the definition of $\phi_0$ is that it can by expressed in terms of $\rho$ and its derivatives alone, and it provides a good approximation of the function $\phi = \theta\circ\gamma$, according to the following lemma.

\begin{lemma}\label{phiphi0lma}
	$\displaystyle\sup_I \phi'/\phi_0' = \exp\left(O(HR^2)^{1 + \alpha/2}\right).$
\end{lemma}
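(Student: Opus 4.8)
## Proof Plan for Lemma \ref{phiphi0lma}

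The plan is to compare the two derivatives $\phi' = \sqrt{1-\dot\rho^2}/G(\gamma(t))$ and $\phi_0' = \sqrt{1-\dot\rho^2}/\sin_{K_0}\rho(t)$ pointwise, so that the ratio $\phi'/\phi_0'$ equals $\sin_{K_0}\rho(t)/G(\gamma(t))$. Taking logarithms, $\log(\phi'/\phi_0') = \log\sin_{K_0}\rho(t) - \log G(\gamma(t))$. The key observation is that both $\log\sin_{K_0}r$ and $\log G$ satisfy, as functions of $r$ along the radial geodesic through $\gamma(t)$, a first-order ODE: $\frac{d}{dr}\log\sin_{K_0}r = \cot_{K_0}r$ and $\frac{d}{dr}\log G = h$. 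Both have the same asymptotics $\log r + O(r^2)$ as $r \to 0$ by \eqref{Ginitialcond} and the corresponding expansion of $\sin_{K_0}$. Hence, integrating the difference along the segment $[0,\gamma(t)]$,
\begin{equation*}
	\log\frac{\phi'(t)}{\phi_0'(t)} = \int_0^{\rho(t)} \bigl(\cot_{K_0}s - h(s e^{i\theta(\gamma(t))})\bigr)\,ds = -\int_0^{\rho(t)} f(s e^{i\theta(\gamma(t))})\,ds,
\end{equation*}
where $f = h - \cot_{K_0}r$ is exactly the function studied in Corollary \ref{fcor}.

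Next I would invoke Corollary \ref{fcor}. We need a curvature-difference bound $|K(q) - K_0| \le T$ on $B_R(0)$. Since $|K| \le H$ and $K_0 = \kappa(t_0) \in [-H,H]$ by \eqref{kappaest}, the crude bound $T = 2H$ works, giving $\norm{f}_\infty \lesssim HR$ on $B_R(0)$ by property (A) of Corollary \ref{fcor}. Actually, to get the sharper exponent $(HR^2)^{1+\alpha/2}$ rather than $HR^2$, I expect one must use property (C): $\norm{f}_\alpha \lesssim L R = H^{1+\alpha/2}R$ on $B_R(0)$, combined with the fact that $f$ vanishes at the origin in an appropriate sense — more precisely, $f(q) = f(q) - f(0 \cdot e^{i\theta})$, and since $h = 1/r + O(r)$ and $\cot_{K_0}r = 1/r + O(r)$ both have the same singular part, $f$ extends continuously to $0$ with $f(0) = 0$. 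Then $|f(s e^{i\theta})| = |f(se^{i\theta}) - f(0)| \le \norm{f}_\alpha s^\alpha \lesssim H^{1+\alpha/2} R \cdot s^\alpha$, so
\begin{equation*}
	\left|\log\frac{\phi'(t)}{\phi_0'(t)}\right| \le \int_0^{\rho(t)} \norm{f}_\alpha s^\alpha\,ds \lesssim H^{1+\alpha/2}R \cdot \frac{\rho(t)^{1+\alpha}}{1+\alpha} \lesssim H^{1+\alpha/2}R^{2+\alpha} = (HR^2)^{1+\alpha/2}.
\end{equation*}
Taking the supremum over $t \in I \subseteq$ (preimage of $B_R(0)$) and exponentiating yields $\sup_I \phi'/\phi_0' = \exp(O(HR^2)^{1+\alpha/2})$.

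The main obstacle I anticipate is making rigorous the claim that $f$ vanishes at the origin and that the $C^\alpha$ seminorm controls $|f|$ near $0$ — this requires a careful look at the initial condition \eqref{riccatiinit} and the behavior of $\cot_{K_0}$, and possibly a slightly more careful application of Lemma \ref{riccatilemma} near $r = 0$ (comparing $h$ with $\cot_{K_0}r$, both solutions of Riccati-type equations, using condition 3 of that lemma with $T = 2H$ to get property (a): $\norm{f}_\infty \lesssim HR \cdot r$ on $B_r(0)$, i.e. $f(q) \lesssim H|q|$). Using $\norm{f}_\infty \lesssim H s$ on $B_s(0)$ directly gives $\int_0^{\rho(t)} Hs\,ds \lesssim H\rho(t)^2 \lesssim HR^2$, which is the weaker exponent; to reach $(HR^2)^{1+\alpha/2}$ one genuinely needs the Hölder estimate (C) together with the interpolation inequality $\norm{f}_\alpha \le \norm{f}_\infty^{1-\alpha}\cdot(\text{Lipschitz bound})^\alpha$ or a direct estimate. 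A secondary technical point is ensuring that the geodesic segment $[0,\gamma(t)]$ is the radial segment $\{s e^{i\theta(\gamma(t))} : 0 \le s \le \rho(t)\}$ in polar normal coordinates, which holds because we are inside a strongly convex normal disc (Lemma \ref{strongconvlma}), so that $h$ along this segment is precisely the Riccati solution and Corollary \ref{fcor} applies verbatim.
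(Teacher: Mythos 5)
Your proof is correct and follows the paper's structure: both express $\log(\phi'/\phi_0')$ as $-\int_0^{\rho(t)} f(u e^{i\phi(t)})\,du$ with $f = h - \cot_{K_0}r$, and both bound this integral using Corollary~\ref{fcor}. The one place you diverge is in how you reach the pointwise bound $|f| \lesssim H^{1+\alpha/2}R^{1+\alpha}$ on the disc. The paper gets it directly from Corollary~\ref{fcor}(A) by choosing the refined $T = 2H^{1+\alpha/2}R^\alpha$: since $K_0$ is attained by $K$ at some point of the segment $[0,\gamma(t_0)]$ (Lemma~\ref{kappaivt}) and $\norm{K}_\alpha \le H^{1+\alpha/2}$, the curvature oscillation on $B_R(0)$ is at most $H^{1+\alpha/2}(2R)^\alpha$, and then $\norm{f}_\infty \lesssim TR = H^{1+\alpha/2}R^{1+\alpha}$. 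You instead take the crude $T=2H$, invoke Corollary~\ref{fcor}(C) to get $\norm{f}_\alpha \lesssim H^{1+\alpha/2}R$, and then use $f(0)=0$ (which does hold, since both $h$ and $\cot_{K_0}r$ have asymptotics $1/r + O(r)$, so $f$ extends continuously to the origin with value $0$) to conclude $|f(se^{i\theta})| \le \norm{f}_\alpha s^\alpha$. Both chains produce $H^{1+\alpha/2}R^{2+\alpha} = (HR^2)^{1+\alpha/2}$. The "obstacle" you flag — rigorously justifying $f(0)=0$ and the use of the Hölder seminorm up to the origin — is not actually a problem, because (C) is stated on the full disc $B_{R_0}(0)$, not an annulus; still, the paper's route via (A) with the sharpened $T$ sidesteps the issue entirely and is marginally cleaner.
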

\begin{proof}
	Recall that $\phi' = \sqrt{1 - \dot\rho^2}/G$ since $\gamma$ is unit - speed, and $\phi_0'= \sqrt{1-\dot\rho^2}/\sin_{K_0}\rho$. The logarithmic derivatives of $\sin_{K_0}r$ and $G$ in the radial direction are $\cot_{K_0}r$ and $h$, respectively, and both $G$ and $\sin_{K_0}r$ vanish at $r=0$. Thus
	\begin{align*}
	\left| \log(\phi'(t)/\phi_0'(t))\right| & = \left| \log\sin_{K_0}\rho(t) - \log G(\rho(t)e^{i\phi(t)}) \right| \\
		& = \left| \int_0^{\rho(t)} \cot_{K_0}u - h(ue^{i\phi(t)})du \right|\\
		& \lesssim H^{1+\alpha/2} R^{2+\alpha},
	\end{align*}
	where in the last passage we have used Lemma \ref{fcor} with $T = 2H^{1+\alpha/2}R^{\alpha}$, since $K$ differs from $K_0$ by at most $2H^{1+\alpha/2}R^{\alpha}$ on the disc $B_R(0)$.
\end{proof}

\begin{corollary}\label{phiphi0cor}
	$\displaystyle\sup_I \phi /\phi_0 = \exp\left(O(HR^2)^{1+\alpha/2}\right)$.
\end{corollary}

\begin{corollary}\label{phi0varycor}
	For every subinterval $J \subseteq I$,
	\begin{equation*}
		\max_{t_1,t_2 \in J}\frac{\phi_0'(t_1)}{\phi_0'(t_2)} \le C  \left(\max_{t_1,t_2 \in J}\frac{\rho(t_1)}{\rho(t_2)}\right)^{C'}.
	\end{equation*}
	where $C,C' >0$ are universal constants.
\end{corollary}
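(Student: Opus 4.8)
The plan is to deduce Corollary~\ref{phi0varycor} by combining Lemma~\ref{phivarylma} (which controls the oscillation of $\phi'$ over a subinterval by a power of the oscillation of $\rho$) with Lemma~\ref{phiphi0lma} (which says that $\phi'$ and $\phi_0'$ agree up to a fixed multiplicative constant $\exp(O(HR^2)^{1+\alpha/2})$ uniformly on $I$). The point is that multiplying $\phi'$ by a function which varies by at most a universal bounded factor cannot change the conclusion of Lemma~\ref{phivarylma} except by adjusting the universal constant $C$.

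Concretely, fix a subinterval $J \subseteq I$ and fix $t_1, t_2 \in J$. Write
\begin{equation*}
	\frac{\phi_0'(t_1)}{\phi_0'(t_2)} = \frac{\phi_0'(t_1)}{\phi'(t_1)} \cdot \frac{\phi'(t_1)}{\phi'(t_2)} \cdot \frac{\phi'(t_2)}{\phi_0'(t_2)}.
\end{equation*}
By Lemma~\ref{phiphi0lma}, both $\phi_0'(t_1)/\phi'(t_1)$ and $\phi'(t_2)/\phi_0'(t_2)$ lie between $\exp(-O(HR^2)^{1+\alpha/2})$ and $\exp(O(HR^2)^{1+\alpha/2})$; under the standing assumption $HR^2 < \pi^2/16$ this is bounded above by a universal constant, say $C_0$. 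Applying Lemma~\ref{phivarylma} to the middle factor gives
\begin{equation*}
	\frac{\phi_0'(t_1)}{\phi_0'(t_2)} \le C_0^2 \cdot C \left(\max_{s_1,s_2 \in J}\frac{\rho(s_1)}{\rho(s_2)}\right)^{C'}.
\end{equation*}
Taking the maximum over $t_1, t_2 \in J$ and absorbing $C_0^2 C$ into a new universal constant yields the claim with the same exponent $C'$.

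I do not anticipate any real obstacle here; this is a bookkeeping corollary. The only minor point to be careful about is that Lemma~\ref{phiphi0lma} controls $\sup_I \phi'/\phi_0'$, and one needs the two-sided bound — i.e. also $\sup_I \phi_0'/\phi'$ — but this is immediate since the estimate in the proof of Lemma~\ref{phiphi0lma} bounds $|\log(\phi'/\phi_0')|$, which is symmetric in $\phi'$ and $\phi_0'$. One could equally well invoke Corollary~\ref{phiphi0cor} is not needed; Lemma~\ref{phiphi0lma} alone suffices. Since the statement only claims the existence of universal constants $C, C'$, no attempt to optimize them is required.
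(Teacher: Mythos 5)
Your proof is correct and is essentially the same one-line argument the paper gives: combine Lemma~\ref{phivarylma} with the uniform comparability $\phi' \sim \phi_0'$ from Lemma~\ref{phiphi0lma} (using $HR^2 \lesssim 1$), absorbing the resulting bounded factor into the universal constant $C$.
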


\begin{proof}
	% Using \eqref{logderivest} and Corollary \ref{Gcompcor}, The proof is similar to the proof of Lemma \ref{phivary}.
	This follows from Lemma \ref{phivarylma} and Corollary \ref{phiphi0lma} since $HR^2 \lesssim 1$.
\end{proof}

Let
\begin{equation}\label{f0def}
	f_0 = \frac{\ddot \rho}{1 - \dot \rho ^2} - \cot_{K_0}\rho.
\end{equation}

By the geoedsic equation \eqref{geoeq}, we have
\begin{equation}\label{f0isf} f_0 = f \circ \gamma,\end{equation}
where the function $f : D \to \RR$ is defined by
$$ f =  h - \cot_{K_0}r.$$
The Riccati equation \eqref{riccatieq} implies that $f$ satisfies the differential equation
\begin{equation}\label{fKeq} f^2 + 2 f \cot_{K_0}r + \partial_rf = K_0 - K.\end{equation}
The following proposition is the main result of this section. It gives some estimates on $f_0$ which, in some sense, characterize $\rho$ as a distance function from a point to a unit-speed geodesic on  a $C^{2,\alpha}$ surface. To avoid carrying the constant $H$ around we set
$$H = 1.$$

\begin{proposition}\label{f0ppn}
	Let $J \subseteq I$ be an interval, and suppose that there exists $r>0$ such that
\begin{equation}\label{rhosimr}r/2\le\rho\le 2r \qquad \text{on } J.\end{equation}
 Define $f_0$ by \eqref{f0def}. Then for every $t,t_1,t_2,t_3,t_4 \in J$,
\begin{align}
		|f_0(t)|&\lesssim r^{1+\alpha},\label{f0est1}\\
		\left|\frac{\Delta f_0}{\Delta \rho}(t_1,t_2)\right|&\lesssim r^\alpha+ \xi(t_1,t_2)\label{f0est2}\\
    \left|\frac{\Delta f_0}{\Delta \rho}(t_1,t_2) - \frac{\Delta f_0}{\Delta \rho}(t_3,t_4) \right| &  \lesssim \diam\{t_1,t_2,t_3,t_4\}^\alpha + \xi(t_1,t_2) + \xi(t_3,t_4),\label{f0est3}
\end{align}

where $\diam$ is diameter, and we denote

$$\frac{\Delta f_0}{\Delta \rho}(t,t') : = \frac{f_0(t) - f_0(t')}{\rho(t) - \rho(t')}, \qquad \xi(t,t') : = \rho(t)^{1+\alpha}|\phi_0'(t)|^\alpha|t - t'|^\alpha/|\rho(t) - \rho(t')|.$$

If $J,J' \subseteq I$ are two subintervals, and there exist $r,r'$ such that $r/2 \le \rho \le 2r$ on $J$ and $r'/2\le \rho \le 2r'$ on $J'$, then for every $t_1,t_2 \in J$ and $t_3,t_4 \in J'$,

\begin{equation}
\begin{split}\label{f0est4}
	\left|\frac{\Delta f_0}{\Delta \rho}(t_1,t_2) \right. &  \left. +2f_0(t_1)\cot_{K_0}\rho(t_1)-\frac{\Delta f_0}{\Delta \rho}(t_3,t_4)-2f_0(t_3)\cot_{K_0}\rho(t_3)\right| \\ 
	& \lesssim  \diam\{t_1,t_2,t_3,t_4\}^\alpha + \xi(t_1,t_2)+ \xi(t_3,t_4).
\end{split}
\end{equation}

%	\begin{align}
%		&\frac{|f_0(t_1)|}{r_1^{1+\alpha}}\lesssim L ,
%		\label{f0est1}\\
%		&\frac{|f_0(t_1)-f_0(t_2)|}{r_1^\alpha|r_1-r_2|+r_1^{1+\alpha}|\phi'_0(t_1)|^\alpha|t_1-t_2|^{\alpha}}\lesssim L ,
%		\label{f0est2}\\
%    \begin{split}\label{f0est3}
%        &\frac{\left|\frac{f_0(t_1)-f_0(t_2)}{r_1-r_2} +2f_0(t_1)\cot_{K_0}r_1-\frac{f_0(t_3)-f_0(t_4)}{r_3-r_4}-2f_0(t_3)\cot_{K_0}r_3\right|}{\diam\{t_1,t_2,t_3,t_4\}^\alpha  +r_1^{1+\alpha}|\phi_0'(t_1)|^\alpha|t_1-t_2|^\alpha/|r_1-r_2|+r_3^{1+\alpha}|\phi_0'(t_3)|^\alpha|t_3-t_4|^{\alpha}/|r_3-r_4|}  \lesssim L,
%    \end{split}
%	\end{align}
%
%
%    If $\max_{t,t' \in J}\rho(t)/\rho(t')$ is bounded by a universal constant, where $J \subseteq I$ is some subinterval containing the $t_1,t_2,t_3,t_4$, then also
%    \begin{equation}\label{f0est4}
%    \begin{split}
%        &\left|\frac{f_0(t_1)-f_0(t_2)}{r_1-r_2}-\frac{f_0(t_3)-f_0(t_4)}{r_3-r_4}\right|\lesssim L \cdot \Big[ \diam\{t_1,t_2,t_3,t_4\}^\alpha
%        \\&+r_1^{1+\alpha}|\phi_0'(t_1)|^\alpha\Big(|t_1-t_2|^\alpha/|r_1-r_2|+|t_3-t_4|^\alpha/|r_3-r_4|\Big)\Big].
%    	\end{split}
%	\end{equation}
\end{proposition}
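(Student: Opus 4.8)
The plan is to exploit two facts about $f = h - \cot_{K_0}r$: that $f_0 = f\circ\gamma$ (equation \eqref{f0isf}), and that $f$ satisfies the first‑order equation \eqref{fKeq}, $\partial_r f = K_0 - K - f^2 - 2f\cot_{K_0}r$. The crucial localization is this: by Lemma \ref{kappaivt}, $K_0 = \kappa(t_0)$ is the Gauss curvature at some point of the radial segment $[0,\gamma(t_0)]$, whose length is $\min\rho\le 2r$; hence that point lies in $\overline{B_{2r}(0)}$, and since $\norm{K}_\alpha\le H^{1+\alpha/2}=1$ and $d_g\sim|\cdot|$ (Corollary \ref{metriccompcor}), we get $|K-K_0|\lesssim r^\alpha$ on $B_{2r}(0)$. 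Applying Corollary \ref{fcor} on $B_{2r}(0)$ (or $B_{\min(2r,R)}(0)$, which is harmless as $\rho\sim r$ on $J$) with $T=Cr^\alpha$ and $H=L=1$ yields there
\[
\norm{f}_\infty\lesssim r^{1+\alpha},\quad \norm{\partial_r f}_\infty\lesssim r^\alpha,\quad \norm{f}_\alpha\lesssim r,\quad\text{and}\quad \norm{\partial_r f}_\alpha\lesssim 1 \text{ on } B_{2r}(0)\setminus B_{r/4}(0).
\]
Estimate \eqref{f0est1} is then immediate, as $|\gamma(t)|=\rho(t)\le 2r$ on $J$. I will also use, throughout, that $\phi'=\sqrt{1-\dot\rho^2}/G\sim\sqrt{1-\dot\rho^2}/\rho\sim\phi_0'$ (Corollary \ref{Gcompcor}, Lemma \ref{phiphi0lma}), that $\rho\,\phi_0'\lesssim 1$ (since $\sin_{K_0}\rho\gtrsim\rho$), that $\phi_0'$ varies by a bounded factor over $J$ (Corollary \ref{phi0varycor}), and the sub‑lemma $\diam J\lesssim r$: indeed $\dot\rho$ is increasing with $\ddot\rho\gtrsim(1-\dot\rho^2)/\rho$ (Lemma \ref{rhoddotlma}), so the part of $J$ where $|\dot\rho|\le\tfrac12$ has length $\lesssim r$ (there $\dot\rho$ increases at rate $\gtrsim 1/r$ over an increment $\le 1$), while on the two complementary sub‑intervals $\rho$ changes at rate $\ge\tfrac12$ and stays in $[r/2,2r]$, so those are $\lesssim r$ too.

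The workhorse is the decomposition, for $t_1,t_2\in J$ with $\gamma(t_j)=\rho_j e^{i\phi_j}$,
\[
\frac{f_0(t_1)-f_0(t_2)}{\rho_1-\rho_2}=\frac{1}{\rho_1-\rho_2}\int_{\rho_2}^{\rho_1}\partial_r f(u e^{i\phi_1})\,du\;+\;\frac{f(\rho_2 e^{i\phi_1})-f(\rho_2 e^{i\phi_2})}{\rho_1-\rho_2}=:A(t_1,t_2)+B(t_1,t_2),
\]
where the mean value theorem gives $A(t_1,t_2)=\partial_r f(u^\ast e^{i\phi_1})$ for some $u^\ast$ between $\rho_1$ and $\rho_2$, and $|B(t_1,t_2)|\le\norm{f}_{\alpha,B_{2r}}(\rho_2|\phi_1-\phi_2|)^\alpha/|\rho_1-\rho_2|\lesssim\xi(t_1,t_2)$, using $\norm{f}_{\alpha,B_{2r}}\lesssim r$, $|\phi_1-\phi_2|\lesssim\phi_0'(t_1)|t_1-t_2|$ (from $\phi'\sim\phi_0'$ and Corollary \ref{phi0varycor}), and $r\,\rho(t_1)^\alpha\sim\rho(t_1)^{1+\alpha}$. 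Since $|A(t_1,t_2)|\le\norm{\partial_r f}_{\infty,B_{2r}}\lesssim r^\alpha$, this gives \eqref{f0est2}. For \eqref{f0est3} (all $t_i\in J$), write $A(t_1,t_2)-A(t_3,t_4)=\partial_r f(u^\ast e^{i\phi_1})-\partial_r f(\tilde u^\ast e^{i\phi_3})$; both arguments lie in the annulus $B_{2r}(0)\setminus B_{r/4}(0)$ and are at Euclidean distance $\le|u^\ast-\tilde u^\ast|+2r|\phi_1-\phi_3|\lesssim\diam\{t_i\}$ (using $\rho\,\phi_0'\lesssim 1$), so $\norm{\partial_r f}_\alpha\lesssim 1$ forces $|A(t_1,t_2)-A(t_3,t_4)|\lesssim\diam\{t_i\}^\alpha$, and the two $B$‑terms contribute $\xi(t_1,t_2)+\xi(t_3,t_4)$.

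The heart of the matter is \eqref{f0est4}, where I invoke the ODE. Put $\mathcal Q(t_1,t_2):=\frac{\Delta f_0}{\Delta\rho}(t_1,t_2)+2f_0(t_1)\cot_{K_0}\rho(t_1)$; by the decomposition and \eqref{f0isf} this equals $\partial_r f(u^\ast e^{i\phi_1})+2f(\rho_1 e^{i\phi_1})\cot_{K_0}\rho_1+O(\xi(t_1,t_2))$. The radial derivative of $u\mapsto f(u e^{i\phi_1})\cot_{K_0}u$ is $\lesssim r^{\alpha-1}$ on $[r/2,2r]$ (from the bounds on $f,\partial_r f$ and $|\cot_{K_0}r|\lesssim 1/r$, $|\tfrac{d}{dr}\cot_{K_0}r|\lesssim 1/r^2$), so replacing $\rho_1$ by $u^\ast$ costs $\lesssim r^{\alpha-1}|\rho_1-\rho_2|\le r^{\alpha-1}|t_1-t_2|\lesssim|t_1-t_2|^\alpha$, the last step precisely because $|t_1-t_2|\le\diam J\lesssim r$. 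Hence, by \eqref{fKeq} at the point $u^\ast e^{i\phi_1}$,
\[
\mathcal Q(t_1,t_2)=K_0-K(u^\ast e^{i\phi_1})-f(u^\ast e^{i\phi_1})^2+O\big(\diam\{t_1,t_2,t_3,t_4\}^\alpha+\xi(t_1,t_2)\big),
\]
and similarly $\mathcal Q(t_3,t_4)$ with $\tilde u^\ast e^{i\phi_3}$, $\xi(t_3,t_4)$ and $r'$ in place of $r$. Subtracting, the $K_0$'s cancel and it remains to bound $|K(u^\ast e^{i\phi_1})-K(\tilde u^\ast e^{i\phi_3})|$ and $|f(u^\ast e^{i\phi_1})^2-f(\tilde u^\ast e^{i\phi_3})^2|$. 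This is the one place where the two scales are bridged: the Euclidean distance between $u^\ast e^{i\phi_1}$ and $\tilde u^\ast e^{i\phi_3}$ is at most $|u^\ast-\rho_1|+|\gamma(t_1)-\gamma(t_3)|+|\rho_3-\tilde u^\ast|\lesssim|t_1-t_2|+d_g(\gamma(t_1),\gamma(t_3))+|t_3-t_4|\le 3\diam\{t_i\}$, using $d_g\sim|\cdot|$ and that $\gamma$ is unit speed. Then $\norm{K}_\alpha\le 1$ bounds the $K$‑difference by $\diam\{t_i\}^\alpha$, and $\norm{f}_\infty\lesssim R^{1+\alpha}\lesssim 1$ with $\norm{f}_{\alpha,B_R}\lesssim R\lesssim 1$ bounds the $f^2$‑difference by $2\norm{f}_\infty\norm{f}_{\alpha,B_R}\diam\{t_i\}^\alpha\lesssim\diam\{t_i\}^\alpha$. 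This proves \eqref{f0est4}.

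The main obstacle is \eqref{f0est4}. It is essential (i) that the localized Corollary \ref{fcor} bounds be available, which hinges on $K_0$ being a curvature value attained inside $B_{2r}(0)$; (ii) that the replacement error be of order $\diam\{t_i\}^\alpha$ rather than merely $r^\alpha$, which forces the sub‑lemma $\diam J\lesssim r$; and (iii) that the two evaluation points $u^\ast e^{i\phi_1}$ and $\tilde u^\ast e^{i\phi_3}$, possibly at very different radii $r$ and $r'$, be within $O(\diam\{t_i\})$ of one another — for which one leans on $\gamma$ being a unit‑speed curve and on the metric equivalence $d_g\sim|\cdot|$.
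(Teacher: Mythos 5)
Your proof is correct and, for \eqref{f0est1}--\eqref{f0est3}, follows essentially the same decomposition as the paper (mean-value in the radial direction plus a H\"older correction in the angular direction, bounded using Corollary \ref{fcor} with $T\lesssim r^\alpha$ once $K_0$ has been recognized, via Lemma \ref{kappaivt}, as a curvature value attained inside $B_{2r}(0)$).

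Where you genuinely diverge is \eqref{f0est4}. The paper first moves $\partial_r f(u^\ast e^{i\phi_1})$ back to $\partial_r f(\gamma(t_1))$ at cost $O(|\rho_1-\rho_2|^\alpha)$ using the bound $\norm{\partial_r f}_\alpha\lesssim 1$, so that the whole quantity $\mathcal Q(t_1,t_2)$ becomes $\Gamma(\gamma(t_1))+O(\cdots)$ with $\Gamma:=2f\cot_{K_0}r+\partial_r f$; it then deduces $\norm{\Gamma}_\alpha\lesssim 1$ globally on $B_R(0)$ from the Riccati-type identity \eqref{fKeq} together with $\norm{K}_\alpha\le 1$ and $\norm{f^2}_\alpha\lesssim 1$, and compares $\Gamma(\gamma(t_1))$ to $\Gamma(\gamma(t_3))$ directly. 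You instead keep $\partial_r f$ at the mean-value point $u^\ast e^{i\phi_1}$ and move $2f\cot_{K_0}r$ from $\rho_1$ to $u^\ast$, then plug in the ODE pointwise and compare the resulting $K$- and $f^2$-values at $u^\ast e^{i\phi_1}$ and $\tilde u^\ast e^{i\phi_3}$. Your replacement error is $\lesssim r^{\alpha-1}|t_1-t_2|$, and to convert this to $|t_1-t_2|^\alpha$ you need the auxiliary fact $\diam J\lesssim r$, which you prove (correctly) via the strict convexity from Lemma \ref{rhoddotlma}. This sub-lemma is not needed in the paper's route: moving $\partial_r f$ instead of $f\cot_{K_0}r$ costs only $|\rho_1-\rho_2|^\alpha\le|t_1-t_2|^\alpha$, with no constraint on $\diam J$. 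Both arguments are sound; yours simply pays a small extra price (the $\diam J\lesssim r$ step) for applying the ODE at a less convenient point.

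One minor remark: you cite the annulus $B_{2r}(0)\setminus B_{r/4}(0)$ for the $\norm{\partial_r f}_\alpha\lesssim 1$ bound; Corollary \ref{fcor}(D) with $R_0=2r$, $\lambda=1/4$ gives it on $B_{2r}(0)\setminus B_{r/2}(0)$, which already contains all the points you use, so the discrepancy is harmless.
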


\begin{proof}
	Since $g$ is a $C^{2,\alpha}$ metric with constant $1$, we have $|K| \le 1$ and $\norm{K}_\alpha \le 1$. By Lemma \ref{kappaivt}, the value $K_0$ is attained by $K$ on the disc $B_m(0)$, so on the disc $B_{2r}(0)$, we have $|K - K_0| \lesssim r^\alpha$. Applying Corollary \ref{fcor} with $T \lesssim r^\alpha$, we have
	\begin{multicols}{2}
    \begin{enumerate}
        \item[(a)] $\norm{f}_\infty \lesssim r^{1+\alpha}$ on $B_{2r}(0)$
        \item[(b)] $\norm{\partial_rf}_\infty\lesssim r^\alpha$ on $B_{2r}(0)$
        \item[(c)] $\norm{f}_\alpha\lesssim r$ on $B_{2r}(0)$
        \item[(d)] $\norm{\partial_rf}_\alpha\lesssim 1$ on $B_{2r}(0)\setminus B_{r/2}(0)$,
    \end{enumerate}
    \end{multicols} 
where the distances are measured in the Euclidean metric.

\medskip Below we should make use of the fact that 
$$|t-t'| = d_g(\gamma(t),\gamma(t')) \sim |\gamma(t) - \gamma(t')|, \qquad t,t' \in I.$$

From (a) and \eqref{f0isf}, inequality \eqref{f0est1} follows at once. Recall the notation
	$$\gamma = \rho e ^ {i \phi}.$$
	Write $\rho_i = \rho(t_i)$ and $\phi_i = \phi(t_i)$ for $i=1,2,3,4$. Then by \eqref{f0isf}, \eqref{rhosimr} and the estimates (c) and (d),
\begin{align*}
	\frac{\Delta f_0}{\Delta \rho}(t_1,t_2) &= \frac{f\left(\rho_1e^{i\phi_1}\right) - f\left(\rho_2e^{i\phi_1}\right)}{\rho_1 - \rho_2} + \frac{f\left(\rho_2e^{i\phi_1}\right) - f\left(\rho_2e^{i\phi_2}\right)}{\rho_1 - \rho_2} \\
	& = \partial_rf\left(\rho_1e^{i\phi_1}\right) + O(|\rho_1 - \rho_2|^\alpha) +  O\left(r\left|\rho_2 e^{i\phi_1} - \rho_2e^{i\phi_2}\right|^\alpha/|\rho_1 - \rho_2|\right)\\
	& = \partial_rf(\gamma(t_1)) + O\left(|\rho_1 - \rho_2|^\alpha + r^{1+\alpha}\max_J|\phi'|^\alpha|t_1 - t_2|^\alpha/|\rho_1 - \rho_2|\right).
\end{align*}
	By Lemma \ref{phiphi0lma} and Corollary \ref{phi0varycor}, we have $\max_J\phi' \sim \max_J\phi_0' \sim \phi_0(t_1)$, and therefore
\begin{align}\label{Deltaf0vsdelf}
	\frac{\Delta f_0}{\Delta \rho}(t_1, t_2) = \partial_rf(\gamma(t_1)) +  O\left(|\rho_1 - \rho_2|^\alpha + \xi(t_1,t_2)\right).
\end{align}

Inequality \eqref{f0est2} follows from \eqref{Deltaf0vsdelf}, (b) and \eqref{rhosimr}.

If we replace $t_1,t_2$ in \eqref{Deltaf0vsdelf} by $t_3$ and $t_4$, substract this new inequality from \eqref{Deltaf0vsdelf} and use (d), then we get

\begin{equation*}
	\left|\frac{\Delta f_0}{\Delta \rho}(t_1,t_2) - \frac{\Delta f_0}{\Delta \rho}(t_3,t_4)\right| \lesssim |t_1 - t_3|^\alpha +|\rho_1 - \rho_2|^\alpha + |\rho_3 - \rho_4|^\alpha + \xi(t_1,t_2) + \xi(t_3,t_4).
\end{equation*}

Since $\rho$ is 1-Lipschitz, \eqref{f0est3} follows.

 The inequalities (a) and (c) imply that 
	$$\norm{f^2}_\alpha \lesssim \norm{f}_\infty\norm{f}_\alpha \lesssim r^{2 + \alpha} \lesssim 1$$
 on $B_{2r}(0)$, which together with \eqref{fKeq} and the assumption $\norm{K}_\alpha \le 1$ gives
	\begin{equation}\label{2fcotdelrf}\norm{2 f \cot_{K_0}r + \partial_rf}_\alpha \lesssim 1 \qquad \text{ on } B_{4R}(0).\end{equation}
	Inequality \eqref{f0est4} now follows from \eqref{2fcotdelrf} and \eqref{Deltaf0vsdelf}.
\end{proof}

Up to now we have considered the function $\rho = r\circ \gamma$. We conclude this section with a property of $\phi = \theta \circ\gamma$, namely, that it does not exceed $\pi/2$ by much, which is not surprising. 

\begin{lemma}\label{phi0bound}
	There is a universal constant $C_1>0$ such that if $HR^2 \le C_1$, then $$\max\left\{|\phi(t)|,|\phi_0(t)|\right\} \le 3 \pi / 4 \qquad \text{ for all } t \in I.$$
\end{lemma}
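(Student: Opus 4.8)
The plan is to bound $\phi_0$ first, since $\phi$ is comparable to $\phi_0$ up to the factor $\exp(O(HR^2)^{1+\alpha/2})$ by Corollary \ref{phiphi0cor}, which is $1 + o(1)$ once $HR^2$ is small enough; so if I can show $|\phi_0(t)| \le 2\pi/3$ (say) for all $t \in I$ then $|\phi(t)| \le 3\pi/4$ follows by choosing $C_1$ small. To bound $\phi_0$, recall from \eqref{phi0def} that $\phi_0(t) = \int_{t_0}^t \sqrt{1-\rho'(s)^2}/\sin_{K_0}\rho(s)\, ds$, and from \eqref{logderivest} (together with Lemma \ref{rhoddotlma}) that the integrand $\omega(s) := \sqrt{1-\rho'(s)^2}/\rho(s)$ has logarithmic derivative comparable in absolute value to $|(\log\rho)'|$; since $\sin_{K_0}\rho \sim \rho$ by Corollary \ref{Gcompcor} (using $HR^2$ small), $\phi_0'$ is comparable to $\omega$.

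The key geometric input is that $\gamma$ is a \emph{minimizing} geodesic in a strongly convex disc, which I would use as follows. The curve $\gamma$, written in polar coordinates as $\rho e^{i\phi}$, minimizes length between its endpoints; comparing with the concatenation of two radial segments through the point $\gamma(t_0)$ of minimal distance $m = \min\rho$ to the origin, I get $|I| = \mathrm{Len}_g(\gamma) \le (\rho(a) - m) + (\rho(b) - m) \le 2(R - m) \le 2R$ where $I = (a,b)$ — actually more precisely $\mathrm{Len}_g(\gamma|_{[t_0,t]})$ equals $t - t_0$ and is at most $\rho(t) - m + (\text{something})$, but the cleaner bound is simply that the total angle swept is controlled. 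The honest way: the $g$-length of $\gamma$ restricted to $[t_0,t]$ is $t - t_0$, and it is at least $\int_{t_0}^t |\dot\phi| \sin_H\rho(s)\,ds \gtrsim \int_{t_0}^t |\dot\phi|\rho(s)\,ds$ by Corollary \ref{Gcompcor}; combined with $\phi' \sim \sqrt{1-\dot\rho^2}/\rho$ and the fact that $\sqrt{1-\dot\rho^2} \le 1$, one sees $\phi$ cannot grow too fast. But to actually get the constant $3\pi/4$ I would instead argue directly on the model comparison surface: since $h = \cot_{K_0} r + f$ with $\|f\|_\infty$ small (Corollary \ref{fcor} with $T \lesssim R^\alpha$, using $H = 1$ and $R$ small), the geodesic $\gamma$ is a small perturbation of a geodesic on the constant-curvature surface of curvature $K_0$, on which a minimizing geodesic staying outside $B_m(0)$ subtends an angle strictly less than $\pi$ (indeed at most $\pi$, approached only in the limit $m \to 0$; for $m$ bounded below in terms of $R$ it is bounded away from $\pi$). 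I would make this quantitative via the comparison $\sin_H r \le G \le \sin_{-H} r$ and an explicit computation of the angle swept by a minimizing geodesic on a constant-curvature model, then absorb the perturbation.

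Concretely, the steps in order: (1) Reduce to bounding $|\phi_0|$ using Corollary \ref{phiphi0cor} and smallness of $HR^2$. (2) Write $|\phi_0(t)| \le \int_I |\phi_0'| \lesssim \int_I \sqrt{1-\dot\rho^2}/\rho$. (3) Use convexity of $\rho$: split $I$ at $t_0 = \mathrm{argmin}\,\rho$; on each monotone piece change variables from $s$ to $u = \rho(s)$, so $ds = du/|\rho'|$ and $\sqrt{1-\dot\rho^2}/\rho \cdot ds = \sqrt{1-\dot\rho^2}/(\rho|\dot\rho|)\,du$. (4) Observe that on a minimizing geodesic in a nearly-flat metric, $\dot\rho$ ranges monotonically and $\sqrt{1-\dot\rho^2}/|\dot\rho|$ is controlled; more usefully, on the \emph{exactly} constant-curvature model the integral $\int \sqrt{1-\dot\rho^2}/(\rho|\dot\rho|)\,du$ over a minimizing geodesic between radii $m$ and $\rho_{\max}$ evaluates (e.g. in the flat case, where $\rho = \sqrt{m^2 + s^2}$, $\dot\rho = s/\rho$, $\sqrt{1-\dot\rho^2} = m/\rho$) to $\int_m^{\rho_{\max}} m/(\rho \cdot \sqrt{\rho^2 - m^2})\,du = \arccos(m/\rho_{\max}) < \pi/2$; the same bound $< \pi/2$ (times two for the two monotone halves, giving $< \pi$) holds in the model of curvature $K_0$ with $|K_0| R^2 \le 1$, with a constant strictly less than $\pi$ once $R$ is bounded. (5) Transfer this to the actual $\rho$ using the geodesic equation \eqref{geoeq} with $h = \cot_{K_0}r + f$, $\|f\|_\infty \lesssim R^\alpha$: this perturbs $\dot\rho$, hence the angle integral, by $O(R^\alpha)$ or $O(R)$, which for $R$ small keeps the total below $3\pi/4 - (\text{perturbation from Corollary \ref{phiphi0cor}})$.

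\textbf{Main obstacle.} The delicate point is step (4)–(5): getting an \emph{explicit} constant (strictly less than, say, $2\pi/3$) for the total angle swept, rather than merely $O(1)$, and controlling the behavior of the integrand $\sqrt{1-\dot\rho^2}/|\dot\rho|$ near $t_0$ where $\dot\rho \to 0$ — the integrand blows up there but is integrable against $du$ because $1 - \dot\rho^2 \to 1$ while $|\dot\rho| \gtrsim \sqrt{\rho - m}$ near the minimum (by $\ddot\rho \sim 1/\rho \sim 1/m$ bounded below, from Lemma \ref{rhoddotlma}). The cleanest route is probably to avoid the change of variables entirely: bound $\phi_0'(s) \le 1/\sin_{K_0}\rho(s)$ (since $\sqrt{1-\dot\rho^2}\le 1$), use $\sin_{K_0}\rho(s) \ge \sin_{K_0}\rho_*$ where... no — instead integrate $\phi_0'$ directly against the arc length of the comparison geodesic, identifying $\int \sqrt{1-\dot\rho^2}/\sin_{K_0}\rho\,ds$ as exactly the angle swept by the constant-curvature-$K_0$ geodesic with the same $\rho$-profile, which by Gauss's lemma / Clairaut on the model is $<\pi$ with an effective constant depending only on $\min\rho \ge m$ and $K_0 R^2$. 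Pinning down that this effective constant is $\le 2\pi/3$ for $HR^2$ below a suitable universal $C_1$ is the one computation that must be done carefully.
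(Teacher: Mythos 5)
Your proposal takes a genuinely different route from the paper. The paper applies Gauss--Bonnet on $(D,g)$ itself to the triangle bounded by $\gamma\vert_{[t_0,t]}$ and the two radial segments, obtaining $\phi(t) = \pi - \beta(t) - \vartheta + \int_{\tr} K$, then estimates the curvature integral by Jacobi/Riccati comparison; since $\vartheta \ge \pi/2$ this gives $\phi(t) \le (\pi/2)/\cos\sqrt{H}R$, and then Corollary \ref{phiphi0cor} transfers the bound to $\phi_0$. You instead try to bound $\phi_0$ directly by comparison with a constant-curvature model and then transfer to $\phi$. Your flat-case computation in step (4), $\int_m^{\rho_{\max}} m/(u\sqrt{u^2-m^2})\,du = \arccos(m/\rho_{\max})$, is correct, and the overall strategy of perturbing off a model is reasonable, but there is a genuine gap at the transfer step (5) — the very step you flag as the main obstacle — and the patch you propose at the end does not close it.

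Specifically, your ``cleanest route'' identifies $\int \sqrt{1-\dot\rho^2}/\sin_{K_0}\rho\,ds$ with ``the angle swept by the constant-curvature-$K_0$ \emph{geodesic} with the same $\rho$-profile'' and invokes Clairaut to bound it by $\pi$. This is not correct. The curve $\tilde\gamma := \rho e^{i\phi_0}$ on the $K_0$-model does have the same $\rho$-profile and is unit speed (by the definition of $\phi_0$, one has $\dot\rho^2 + \sin_{K_0}(\rho)^2\phi_0'^2 = 1$), but it is \emph{not} a geodesic of the model unless $f_0 \equiv 0$: by Lemma \ref{geodesiccurvaturelemma} applied on the model, its geodesic curvature is $-\sqrt{1-\dot\rho^2}\bigl(\ddot\rho/(1-\dot\rho^2)-\cot_{K_0}\rho\bigr) = -\sqrt{1-\dot\rho^2}\,f_0$, which is small but nonzero. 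Clairaut's relation, and the resulting $<\pi$ angle bound, apply only to geodesics, so the conclusion cannot be read off directly. The fallback Gronwall-type perturbation of the $\rho$-ODE is also not obviously uniform: near $t_0$ the right side of $\ddot\rho = h(1-\dot\rho^2)$ has Lipschitz constant of order $1/m$ and the angular integrand $\phi_0'$ is itself of order $1/m$, so an $L^\infty$-small perturbation of the equation need not produce an $O(R^\alpha)$ change in the total angle when $m \ll R$, which is allowed.

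The honest way to exploit the smallness of the geodesic curvature of $\tilde\gamma$ is Gauss--Bonnet on the model: the triangle bounded by $\tilde\gamma$ and two radial segments gives $\phi_0 = \pi - \beta - \vartheta + \int K_0\,dA - \int_{\tilde\gamma}\sqrt{1-\dot\rho^2}\,f_0\,ds$, with both correction terms $O(HR^2)$ for $HR^2$ small. But at that point you have reproduced the paper's proof, merely transposed to the comparison model; the paper's version is cleaner because applying Gauss--Bonnet on $(D,g)$ makes the geodesic-curvature term vanish outright.
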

\begin{figure}
            \centering
            \includegraphics[width=.3\textwidth]{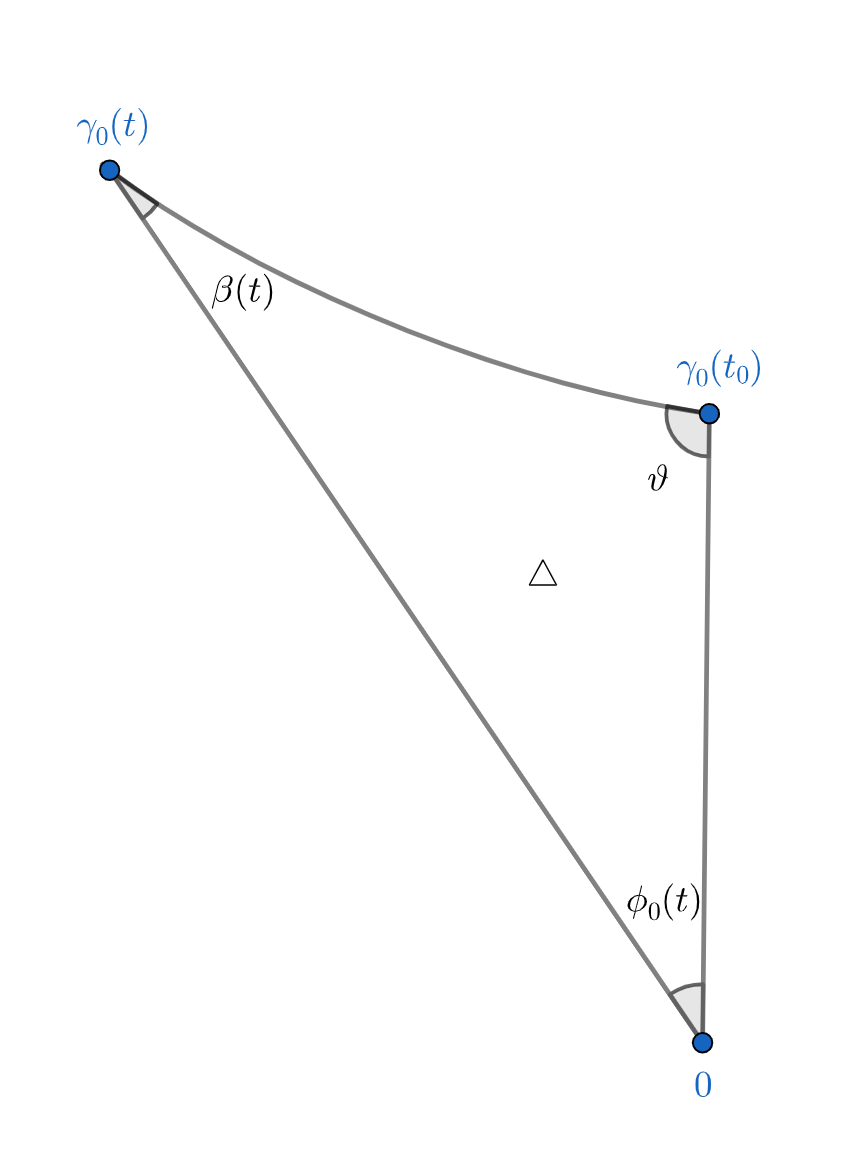}
            \caption{Proof of Lemma \ref{phi0bound}}
            \label{gbfig}
\end{figure}
\begin{proof}
        Fix $t\in I$. For definiteness we assume $t>t_0$, and hence $\phi(t)>0$. The case $t<t_0$ is identical. Consider the triangle $\triangle$ bounded by the three geodesics $[0,\gamma_0(t_0)]$, $\gamma_0\vert_{[t_0,t]}$ and $[\gamma_0(t),0]$ (see Figure \ref{gbfig}). By the Gauss-Bonnet formula,
        \begin{equation}\label{GaussBonnet}
            \phi(t)=\pi-\beta(t)-\vartheta+\int_{\tr}K,
        \end{equation}
    where integration is with respect to the Riemannian area form, and $\vartheta$ and $\beta(t)$ are the interior angles of $\tr$ at $\gamma_0(t_0)$ and $\gamma_0(t)$ respectively.

    Since $\rho$ attains a minimum at $t_0$, the angle $\vartheta$ is at least $\pi/2$ (if $t_0$ is not an endpoint of $I$  then in fact $\vartheta=\pi/2$), and so $\pi - \beta(t) - \vartheta \le \pi / 2$. 

	The Riemannian area form is given in polar coordinates by $G \cdot dr\wedge d\theta$. Using \eqref{jacobieq} and \eqref{Ginitialcond}, Lemma \ref{riccaticomparison} and Corollary \ref{Gcompcor}, we have

\begin{align*}
	\int_{\tr}K & = \int_0^{\phi(t)} \int_0^{\rho(\phi^{-1}(\theta))} K(r,\theta) G(r,\theta) dr d\theta\\
	& \stackrel{\eqref{jacobieq}}{=} - \int_0^{\phi(t)} \int_0^{\rho(\phi^{-1}(\theta))} \partial^2_r G (r,\theta) dr d\theta\\
	& \stackrel{\eqref{Ginitialcond}}{=} \int_0^{\phi(t)} 1 - \partial_rG(\rho(\phi^{-1}(\theta),\theta) d\theta\\
	& \stackrel{\eqref{hdef}}{=} \int_0^{\phi(t)} 1 - (hG)(\rho(\phi^{-1}(\theta),\theta) d\theta\\
	& \stackrel{\ref{riccaticomparison}}{\le} \int_0^{\phi(t)} 1 - \sin_H(\rho(\phi^{-1}(\theta)))\cot_H(\rho(\phi^{-1}(\theta))) d\theta\\
	& =\int_0^{\phi(t)} 1 - \cos(\sqrt{H} \rho(\phi^{-1}(\theta))) d\theta\\
	& \stackrel{\ref{Gcompcor}}{\le} \phi(t)(1- \cos \sqrt{H}R)
\end{align*}
	
	and therefore
	$$ \phi(t) \le \frac{\pi/2}{\cos \sqrt{H}R}.$$
	Thus $\phi(t) \le 3 \pi / 4$ if $HR^2$ is sufficiently small, and by Corollary \ref{phiphi0cor}, the same holds for $\phi_0$.
    \end{proof}

\section{Proof of Theorem \ref{mainthm}}\label{mainthmproofsec}

In this section we prove Theorem \ref{mainthm}. Let $0 < R \le C_1$, where $C_1$ is a universal constant to be determined later. For now we require $C_1 \le \min\{\pi^2/16,\tilde C_1\}$, where $\tilde C_1$ is the constant from Lemma \ref{phi0bound}.  Let $\rho : I \to (0,R]$ satisfy the hypothesis of Theorem \ref{mainthm}:
\begin{center}
\begin{itemize}    \item[$(\star)$] For every subset $S\subseteq I$ consisting of at most 12 points, there exists a $C^{2,\alpha}$ Riemannian surface $(M_S,g_S)$ with constant $\le 1$, a point $p_S \in M_S$ and a unit speed geodesic $\gamma_S : I \to M$ such that $\rho(t) = \rho_S(t) : = d_{g_S}(\gamma_S(t),p_S)$ for all $t \in I$.
\end{itemize}
\end{center}

The following estimates, which were proved in Section \ref{distfuncsec} for distance functions on $C^{2,\alpha}$ surfaces, apply also to the function $\rho$.

\begin{proposition}\label{rhoestppn}
	The function $\rho$ satisfies the following properties:
	\begin{align}
		\frac{\rho\ddot\rho}{1-\dot\rho^2} & \sim 1,\label{rhoest1}\\[10pt]
		|\kappa| & \le H,\label{rhoest2}\\[10pt]
		\max_{t_1,t_2 \in J}\frac{\phi_0'(t_1)}{\phi_0'(t_2)} & \le C  \left(\max_{t_1,t_2 \in J}\frac{\rho(t_1)}{\rho(t_2)}\right)^{C'} \qquad \text{for every subinterval } J \subseteq I,\label{rhoest4}\\[10pt]
		|\phi_0| & \le 3 \pi / 4 \qquad \text{ for all } t \in I.\label{rhoest5}
	\end{align}
	Here $\kappa$ and $\phi_0$ are defined by \eqref{kappadef} and \eqref{phi0def}, respectively, and $C,C'$ are universal constants. \underline{\textit{Moreover, Proposition \ref{f0ppn} holds verbatim for $\rho$}}.
\end{proposition}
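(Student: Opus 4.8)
The plan is to transfer each estimate from the corresponding result of Section~\ref{distfuncsec} — already established for genuine distance functions on $C^{2,\alpha}$ surfaces of constant $\le 1$ — to $\rho$, by a limiting argument over the finite configurations furnished by $(\star)$. A preliminary regularity step is needed so that $\dot\rho,\ddot\rho,\kappa,\phi_0,f_0$ make sense. Two-point sets show $\rho$ is $1$-Lipschitz (each $\rho_S$ is a distance function along a unit-speed curve), and for $C^{2,\alpha}$ regularity I would localize: fix $t_\ast\in I$, put $r=\rho(t_\ast)$ and $J=(t_\ast-r/8,t_\ast+r/8)\cap I$. For every four-point $S\subseteq J$ the function $\rho_S$ from $(\star)$ agrees with $\rho$ on $S$, is $C^{2,\alpha}$, and — being $1$-Lipschitz and meeting $\rho$ (hence values comparable to $r$) in $J$ — satisfies $\rho_S\sim r$ on $J$; running Lemma~\ref{geoeqlemma}(2) on this fixed scale (Corollary~\ref{fcor} gives $h\circ\gamma_S=1/r(\gamma_S)+f\circ\gamma_S$ with $\norm{f}_{C^{0,\alpha}(B_{2r})}\lesssim r$) yields $\norm{\ddot\rho_S}_{C^{0,\alpha}(J)}\lesssim r^{-1-\alpha}$, a bound uniform in $S$. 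Corollary~\ref{whitneyfiniteness} with $k=2$ then gives $\rho|_J\in C^{2,\alpha}$ with $\norm{\ddot\rho}_{C^{0,\alpha}(J)}\lesssim r^{-1-\alpha}$; letting $t_\ast$ vary, $\rho\in C^{2,\alpha}_{\mathrm{loc}}(I)$. Once \eqref{rhoest1} is known $\rho$ is strictly convex, so $t_0=\mathrm{argmin}\,\rho$ (assumed attained, extending $I$ slightly if necessary), $K_0$, $\phi_0$ and $f_0$ are well-defined.

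For the pointwise estimates \eqref{rhoest1}, \eqref{rhoest2} and for Proposition~\ref{f0ppn}, observe that each inequality involves $\rho$ and the derived quantities at finitely many points $t_1,\dots,t_N$ of $I$ with $N\le 4$. Given such points, choose $S_n\subseteq I$ of at most $12$ points, consisting of a three-point cluster $\{t_j-\delta_n,t_j,t_j+\delta_n\}$ around each $t_j$ (and around $t_0$, when $K_0$ or $\phi_0$ occurs), with $\delta_n\to 0$. Applying the previous paragraph to $\rho_{S_n}$ on the fixed-scale interval around each $t_j$ — where $\rho_{S_n}$ is comparable to $\rho(t_j)$ uniformly, since $\rho_{S_n}$ is $1$-Lipschitz and meets $\rho$ on the cluster — gives $\norm{\ddot\rho_{S_n}}_{C^{0,\alpha}}\lesssim\rho(t_j)^{-1-\alpha}$ uniformly in $n$. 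Because $\rho$ and $\rho_{S_n}$ share all divided differences on $S_n$, comparing these with Taylor expansions of $\rho_{S_n}$ and using the uniform bounds yields $\rho_{S_n}(t_j)=\rho(t_j)$, $\dot\rho_{S_n}(t_j)\to\dot\rho(t_j)$, $\ddot\rho_{S_n}(t_j)\to\ddot\rho(t_j)$, and likewise at $t_0$; since $\kappa$, $K_0=\kappa(t_0)$, $\phi_0'=\sqrt{1-\dot\rho^2}/\sin_{K_0}\rho$ and $f_0$ are continuous functions of $(\rho,\dot\rho,\ddot\rho)$, all of them converge at $t_1,\dots,t_N$ to their values for $\rho$. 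Passing to the limit in Lemma~\ref{rhoddotlma}, in \eqref{kappaest}, in the inequality $|\kappa_{S_n}(t)-\kappa_{S_n}(t')|\le\norm{K_{S_n}}_\alpha(2R)^\alpha$, and in Proposition~\ref{f0ppn} written for $\rho_{S_n}$ on the convex hull of the configuration (which has length $\lesssim r$ by strict convexity, so $\rho_{S_n}$ stays comparable to the relevant scale there) then yields \eqref{rhoest1}, \eqref{rhoest2}, the H\"older bound $|\kappa(t)-\kappa(t')|\lesssim R^\alpha$, and the verbatim extension of Proposition~\ref{f0ppn}. The bound $12=4\times 3$ enters precisely here: \eqref{f0est4} carries four free points and $f_0$ is itself a second-derivative quantity, i.e.\ a three-point cluster at each.

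The two remaining estimates \eqref{rhoest4} and \eqref{rhoest5} depend on $\rho$ on a whole subinterval, so rather than transferring them I would rederive them directly from \eqref{rhoest1}–\eqref{rhoest2}, copying the arguments of Section~\ref{distfuncsec}. From the definition of $\kappa$ one has $\ddot\rho=(1-\dot\rho^2)\cot_\kappa\rho$, hence $\bigl|\tfrac{d}{dt}\log\phi_0'\bigr|=|\dot\rho|\,(\cot_\kappa\rho+\cot_{K_0}\rho)\sim\bigl|\tfrac{d}{dt}\log\rho\bigr|$; feeding this into the convexity argument of Lemma~\ref{phivarylma} gives \eqref{rhoest4}. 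For \eqref{rhoest5}, change variables $s\mapsto u=\dot\rho(s)$ on the monotone halves of $[t_0,t]$: using $\sin_{K_0}\rho\cdot\cot_\kappa\rho=\sin_{K_0}\rho\cdot\cot_{K_0}\rho+O(R^\alpha)=1+O(R^\alpha)$ (from $|\kappa-K_0|\lesssim R^\alpha$, $|K_0|\le 1$, $\rho\le R$), the integral defining $\phi_0$ collapses to $\int_0^{\dot\rho(t)}\bigl(1+O(R^\alpha)\bigr)du/\sqrt{1-u^2}=\bigl(1+O(R^\alpha)\bigr)\arcsin\dot\rho(t)$, which is $\le 3\pi/4$ once $R\le C_1$ is taken small enough.

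The main obstacle is the uniform-in-$S_n$ localized regularity underlying the first two paragraphs: the estimate \eqref{rhoddotesteq} of Lemma~\ref{geoeqlemma}, taken verbatim, is governed by the \emph{global} minimum of $\rho_{S_n}$ over $I$, which $(\star)$ does not control — the surface $M_{S_n}$ is tied to $\rho$ on only twelve points, and $\rho_{S_n}$ may dip arbitrarily low elsewhere. One must therefore rerun that lemma's proof on a fixed-scale interval around each point of interest, where $1$-Lipschitz continuity of $\rho_{S_n}$ forces it to stay comparable to the correct scale. Secondary technical points are the well-definedness of $t_0$ for $\rho$ together with $t_{0,S_n}\to t_0$, and excluding radial geodesics $\gamma_{S_n}$ (for which $|\dot\rho_{S_n}|\equiv 1$) once the clusters are fine enough, which is consistent with \eqref{rhoest1} forcing $\ddot\rho>0$.
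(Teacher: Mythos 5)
Your proposal is correct, and it follows the same overall strategy as the paper (transfer estimates from the $\rho_S$ furnished by $(\star)$ to $\rho$ via limiting arguments over shrinking clusters); however, you add genuinely useful precision in two places, and you take an alternative route for \eqref{rhoest4} and \eqref{rhoest5} that arguably repairs a gap in the paper's brief statement.

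\textbf{The localization.} The paper asserts that ``$\norm{\ddot\rho_S}_\infty$ and $\norm{\ddot\rho_S}_\alpha$ are bounded uniformly over all such $S$'', citing Lemma \ref{geoeqlemma}. But the estimates \eqref{rhoddotesteq} are governed by $m_S=\min_I\rho_S$, and $(\star)$ gives no lower bound on $\min_I\rho_S$: the surfaces $M_S$ are pinned to $\rho$ only at $\le 12$ points, and $\rho_S$ may dip arbitrarily low elsewhere on $I$. You diagnose exactly this, and the fix is correct: restrict to a window $J$ of width $\sim\rho(t_*)$ about $t_*$, use $1$-Lipschitz continuity to force $\rho_S\sim\rho(t_*)$ on $J$, and then run Lemma \ref{geoeqlemma} and Corollary \ref{whitneyfiniteness} at that fixed scale. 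This gives $\rho\in C^{2,\alpha}_{\mathrm{loc}}$, which is what the rest of the argument needs.

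\textbf{The integral estimates \eqref{rhoest4}, \eqref{rhoest5}.} The paper's proof hinges on the remark that each estimate involves ``derivatives of $\rho$ up to order two, evaluated at 4 points or less''. This is accurate for \eqref{rhoest1}, \eqref{rhoest2}, and Proposition \ref{f0ppn}, but not literally for \eqref{rhoest4} (a sup over a subinterval) and \eqref{rhoest5} (an integral from $t_0$ to $t$). Attempting a direct transfer for these would also run into the issue that the associated quantities for $\rho_S$ ($t_{0,S}$, $K_{0,S}$, $\phi_{0,S}$) are a priori different from those of $\rho$. Your approach sidesteps this entirely: once \eqref{rhoest1}, \eqref{rhoest2} and the H\"older bound $|\kappa(t)-\kappa(t')|\lesssim R^\alpha$ have been transferred, you rederive \eqref{rhoest4} by re-running the convexity argument of Lemma \ref{phivarylma} using $\ddot\rho=(1-\dot\rho^2)\cot_\kappa\rho$ and $\cot_\kappa\rho\sim\cot_{K_0}\rho\sim 1/\rho$, and you rederive \eqref{rhoest5} by the substitution $u=\dot\rho(s)$, using $\sin_{K_0}\rho\cdot\cot_\kappa\rho=1+O(R^\alpha)$ to collapse the integral to $(1+O(R^\alpha))\arcsin\dot\rho(t)$. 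Both computations check out, and the second one is in fact more elementary than the paper's Gauss--Bonnet route in Lemma \ref{phi0bound}, which relies on having an honest surface at hand.

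A very minor point: you note $\sin_{K_0}\rho\cdot\cot_{K_0}\rho=1+O(R^\alpha)$; the true error is $O(HR^2)=O(R^2)$, which is stronger, but $O(R^\alpha)$ suffices for the conclusion since $R\le 1$ and $\alpha\le 1$. The technical caveats you flag at the end — well-definedness of $t_0$ (and $t_{0,S_n}\to t_0$), and excluding radial $\gamma_{S_n}$ once clusters are fine enough, which is forced by $\ddot\rho>0$ — are real but handled the same way the paper (implicitly) handles them. Overall the proposal is sound, more careful than the paper's sketch, and takes a cleaner route to \eqref{rhoest4} and \eqref{rhoest5}.
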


\begin{proof}
	For every $S \subseteq I$ of cardinality at most 12, let $\rho_S$ be as in the hypothesis $(\star)$. By Lemma \ref{geoeqlemma}, we have that $\norm{\ddot\rho_S}_\infty$ and $\norm{\ddot\rho_S}_\alpha$ are bounded uniformly over all such $S$. Since the restriction of $\rho$ to any 4-point subset $S$ agrees with $\rho_S$ on $S$, the function $\rho$ itself is also $C^{2,\alpha}$ by Corollary \ref{whitneyfiniteness}. In order to prove the estimates above for the function $\rho$, we can replace the derivatives of $\rho$ by divided differences, apply the results of Section \ref{distfuncsec} to the function $\rho_S$ for a suitable choice of $S$, and conclude the same claim for $\rho$ up to an error which can be made arbitrarily small. 

\medskip For example, consider \eqref{rhoest2}. We have already asserted this inequality in \eqref{kappaest} when $\rho$ was of the form $\rho = d_g(\gamma(\cdot),p)$ for some $C^{2,\alpha}$ surface $(M,g)$ with constant $\le H$. Fix $t \in I$ and $\eps>0$, choose $s_1,s_2\in I $ such that $\diam\{t,s_1,s_2\} < \eps$, and denote $T = \{t,s_1,s_2\}$ and $S = \{s,s'\}$. Since $\rho \in C^{2,\alpha}$, 
$$\kappa(t) = \frac{1}{\rho^2(t)} \Phi^{-1}\left(\frac{\rho(t)\rho[T]}{1 - \rho[S]^2}\right) + O(\eps),$$
where the implied constant does not depend on the choice of $s_i$. The divided difference $\rho[X]$ is defined in \S\ref{whitneysec}. Now use  $(\star)$ and obtain a $C^{2,\alpha}$ surface $(M_T,g_T)$, a point $p_T \in M_T$ and a unit-speed geodesic $\gamma_T$ such that $d_{g_T}(\gamma_T(t),p_T) = \rho(t)$ and $d_{g_T}(\gamma_T(s_i),p_T) = \rho(s_i)$, $i=1,2$. By applying \eqref{kappaest} to the function $\rho_T : = d_{g_T}(\gamma_T(\cdot),p_T)$, we see that
$$\frac{1}{\rho^2(t)} \Phi^{-1}\left(\frac{\rho(t)\rho[T]}{1 - \rho[S]}\right) = \frac{1}{\rho_T^2(t)} \Phi^{-1}\left(\frac{\rho_T(t)\rho_T[T]}{1 - \rho_T[S]^2}\right) \le H + O(\eps),$$
which proves that $\rho$ satisfies \eqref{rhoest1}.

\medskip Each of the estimates mentioned in the Proposition involves the derivatives of $\rho$ up to order two, evaluated at 4 points or less, so 12 points are sufficient to replace all the necessary derivatives by divided differences. Since all of the above estimates were proved in Section \ref{distfuncsec} when $\rho$ is an actual distance function, we may conclude them for the the function $\rho$.
\end{proof}

\medskip A simple consequence of the assumption $(\star)$ and the triangle inequality is that

\begin{equation}\label{rhometriccond} \rho(t) - \rho(t') \le |t - t'| \le \rho(t) + \rho(t') \qquad t,t' \in I. \end{equation}

In particular, $\rho$ is 1 - Lipschitz. We may assume that $\dot \rho \not \equiv\pm1$, because otherwise the conclusion of Theorem \ref{mainthm} is trivial. By \eqref{rhoest1}, we have $\ddot \rho > 0$ which, since $\rho$ is 1- Lipschitz, also implies that $|\dot\rho| < 1$. It also follows that $\rho$ has a unique minimum, which without loss of generality is attained at $0$. Set
$$m := \min\rho = \rho(0).$$
As before, we work in polar coordinates $(r,\theta)$ in $\RR^2$. We shall construct

\begin{enumerate}
	\item  a positive function $G$ on the disk $D:=B_R(0)$, satisfying conditions (a) - (c) of Theorem \ref{Gthm}, and 
	\item a curve $\gamma = \rho e^{i\phi} : I \to D$ satisfying
\begin{equation}\label{rhoconstructionunit} \dot \rho ^2 + (G\circ\gamma) ^2 \dot \phi^2 = 1 \end{equation} and
\begin{equation}\label{rhoconstructiongeoeq}\ddot\rho = (h\circ\gamma)(1 - \dot\rho^2).\end{equation}
\end{enumerate}

It will then follow from Theorem \ref{Gthm} that  the metric
\begin{equation}\label{gdef}g = dr^2 + G^2 d\theta^2\end{equation}
 is a strongly convex $C^{2,\alpha}$ metric on $D$ with constant $\lesssim 1$, and, by Proposition \ref{geoeqlemma}, that  the curve $\gamma$ is a unit - speed minimizing geodesic in $(D,g)$. Since $g$ takes the form \eqref{gdef}, it will follow that
$$d_g(\gamma(t),p) = \rho(t) \qquad t \in I.$$
By Lemma \ref{metricextlma}, the metric $g$ is extendable to a $C^{2,\alpha}$ surface with constant $\lesssim 1$, and so the proof of Theorem \ref{mainthm} will be complete.

Define the functions $\kappa,K_0, \phi_0$ and $f_0$ as in \eqref{kappadef}, \eqref{K0def}, \eqref{phi0def} and \eqref{f0def}. Define a curve $\gamma_0 : I \to D$ by
$$\gamma_0 = \rho e^{i\phi_0}.$$
\begin{proposition}\label{fprop}
	There exists a function $f : D \to \RR$ which is $C^1$ in the  $r$-coordinate, extends $f_0$ from $\gamma_0$ to $D$ in the sense that 
	\begin{equation}\label{fext}
		f \circ \gamma_0 = f_0,
	\end{equation}
	and satisfies
    \begin{align}
		f\vert_{B_{m/2}(0)}&\equiv 0\label{fvanish}\\
		\norm{f/r}_\infty&\lesssim 1,\label{finftycondition}\\
		\norm{f^2+2f\cot_{K_0}r+\partial_rf}_\alpha & \lesssim 1.\label{fholdercondition}
	\end{align}
\end{proposition}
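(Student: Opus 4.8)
The plan is to construct $f$ by first extending the one-dimensional data $f_0$ (living on the curve $\gamma_0$) in the radial direction using the differential equation \eqref{fKeq}, and then applying Whitney's extension theorem (in the form of Corollary \ref{whitneyextcor}) to interpolate between radial fibers. More precisely, parametrize points of $D$ by $re^{i\theta}$; for each $\theta$ in the range of $\phi_0$, the curve $\gamma_0$ meets the ray of angle $\theta$ at exactly one point $\rho(\phi_0^{-1}(\theta))e^{i\theta}$ (since $\phi_0$ is strictly monotone by \eqref{phi0def}, as $\sqrt{1-\dot\rho^2}>0$), so prescribing $f\circ\gamma_0 = f_0$ fixes $f$ at one radius on each such ray. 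I would then define $f$ along the rest of that ray by solving the ODE in $r$ obtained from \eqref{fKeq}, namely $\partial_r f = (K_0 - \tilde K) - f^2 - 2f\cot_{K_0}r$ for a suitable choice of curvature function $\tilde K$ on $D$ — but a cleaner route is to directly prescribe the function $p(r,\theta) := f^2 + 2f\cot_{K_0}r + \partial_r f$ (which should have $\alpha$-Hölder norm $\lesssim 1$, by \eqref{fholdercondition}) and recover $f$ as the solution of this Riccati-type ODE with the initial value read off from $f_0$ at $r = \rho(\phi_0^{-1}(\theta))$. Since the relevant quantity in \eqref{f0est4} is exactly $\Delta f_0/\Delta\rho + 2 f_0\cot_{K_0}\rho$, which is a divided-difference proxy for $p$ restricted to $\gamma_0$, Proposition \ref{f0ppn} (which holds for $\rho$ by Proposition \ref{rhoestppn}) is precisely what gives the Hölder control needed for this data before extension.

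**Next I would** organize the construction scale by scale. Partition $I$ (equivalently, the range of $\rho$, by convexity) into the dyadic shells where $\rho \sim 2^{-k}m$ is \emph{not} the right normalization — rather, cover $I$ by intervals $J_k$ on which $\rho \in [r_k/2, 2r_k]$ for a geometric sequence $r_k$ ranging from $m$ up to $\sim R$; this is the setup of Proposition \ref{f0ppn}. On each shell, estimates \eqref{f0est1}–\eqref{f0est3} give: $|f_0|\lesssim r^{1+\alpha}$, the divided difference $\Delta f_0/\Delta\rho$ is bounded by $r^\alpha + \xi$, and it is itself $\alpha$-Hölder up to the $\xi$ terms. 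The role of $\xi(t,t') = \rho^{1+\alpha}|\phi_0'|^\alpha|t-t'|^\alpha/|\rho(t)-\rho(t')|$ is to absorb the ``angular'' error coming from the fact that $\gamma_0$ is not radial: when $\dot\rho$ is close to $\pm 1$, $\rho(t)-\rho(t')$ can be much larger than the displacement in $\theta$, making $\xi$ small; when $\dot\rho$ is small (near the minimum $t_0$), $\xi$ is controlled by $\phi_0'\sim 1/\rho$ which compensates. I would show that on each shell $\xi$ contributes an acceptable bounded amount, so that the hypotheses of Corollary \ref{whitneyextcor} are met with $T_1, T_2 \lesssim 1$ (after the appropriate rescaling making the shell have size comparable to $r$), yielding on each shell a radial extension of $f$ with $\norm{\partial_r f}_\infty$ and the Hölder norm of $p$ under control. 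The estimate \eqref{fvanish} is arranged by simply setting $f\equiv 0$ on $B_{m/2}(0)$ and noting $f_0$ is only defined where $\rho \ge m$, and \eqref{finftycondition} follows from \eqref{f0est1} together with $G/r\sim1$ and $|f|\lesssim r^{1+\alpha}\le r$.

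**The main obstacle** I anticipate is \emph{gluing the shells together}: the extensions produced on neighboring shells $J_k, J_{k+1}$ must agree on the overlap (or be patched via a partition of unity in the $\theta$-variable) without destroying the Hölder bound \eqref{fholdercondition}, and this is exactly where the cross-scale estimate \eqref{f0est4} — which compares $\Delta f_0/\Delta\rho + 2f_0\cot_{K_0}\rho$ at points drawn from two different shells $J, J'$ — is essential. Concretely, the quantity being extended must be globally $\alpha$-Hölder across $D$, not merely shell-by-shell, and \eqref{f0est4} is the statement that the relevant ``$p$ on $\gamma_0$'' data satisfies a global $\alpha$-Hölder bound with the only slack being the $\xi$ terms. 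I would therefore run Whitney's theorem not shell-by-shell but once, globally, on the function $f_0$ reparametrized by $\rho$ (using $\rho$ as the independent variable, legitimate since $\rho$ is convex with a unique minimum), checking the two-point Lipschitz bound and the three-point second-difference bound of Corollary \ref{whitneyextcor} directly from \eqref{f0est1}–\eqref{f0est4}. The delicate bookkeeping is confirming that the aggregate of all $\xi$-contributions, summed against a three-point configuration spanning possibly many scales, stays $O(1)$; this reduces to a geometric-series estimate using $|\phi_0'|\sim (1-\dot\rho^2)^{1/2}/\rho$ and the variation bound \eqref{rhoest4} on $\phi_0'$ across subintervals, which keeps $\xi$ summable. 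Once the global extension of $p$ (equivalently of $f$) is in hand, \eqref{fext}, \eqref{fvanish}, \eqref{finftycondition} and \eqref{fholdercondition} are all read off directly, completing the proof of the proposition.
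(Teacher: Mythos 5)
Your outline correctly identifies the key ingredients (dyadic shells, Whitney's theorem, and the cross-scale estimate \eqref{f0est4}), but it has a genuine gap at the step where you propose to ``run Whitney's theorem \ldots once, globally, on the function $f_0$ reparametrized by $\rho$.'' The map $t\mapsto\rho(t)$ has a critical point at $t_0=\mathrm{argmin}\,\rho$, where $\dot\rho=0$ and $\ddot\rho\sim 1/\rho$. Near $t_0$ one has $|\rho(t)-\rho(t_0)|\sim |t-t_0|^2/\rho$, so the error term $\xi(t,t')=\rho^{1+\alpha}|\phi_0'|^\alpha|t-t'|^\alpha/|\rho(t)-\rho(t')|$ is of order $\rho^2|t-t'|^{\alpha-2}$ and blows up as $t,t'\to t_0$; it is not summable or absorbable by any geometric-series argument. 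Divided differences of $f_0$ with respect to $\rho$ therefore do not satisfy the hypotheses of Corollary \ref{whitneyextcor} uniformly, and a single global Whitney step in the $\rho$-variable fails precisely where $|\dot\rho|$ is small. The paper avoids this not by taming $\xi$ but by changing the construction on shells where $|\dot\rho|\le 1/2$ somewhere (Case I of Lemma \ref{fklemma}): there $\sqrt{1-\dot\rho^2}\sim 1$, and the extension $f_k$ is taken to be \emph{independent of $r$}, so its Hölder bound is read off from $\|f_0\|_\alpha\lesssim 2^k$ as a function of $t$, with no divided differences in $\rho$ at all. Whitney's theorem is applied only in Case III (where $|\dot\rho|>1/2$ on the whole shell, so $|\rho(t)-\rho(t')|\sim|t-t'|$ and the $\xi$'s are harmless), and only to a single auxiliary radial function $y(r)$ interpolating $f_0$ on a $\delta_k$-net; the full $f_k$ is then $y(r)+f_0(t)-y(\rho(t))$, which matches $f_0$ on $\gamma_0$ while keeping $\partial_r f_k = y'(r)$ purely radial.

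The second weak point is the ODE-based extension: prescribing $p=f^2+2f\cot_{K_0}r+\partial_r f$ and solving for $f$ along each ray with a $\theta$-dependent initial radius $\rho(\phi_0^{-1}(\theta))$ is circular as stated — you do not yet have $p$ on $D$, only an approximate version on $\gamma_0$ via \eqref{f0est4}, so this reformulation does not remove the extension problem; it also gives no control on Hölder regularity of $f$ in the $\theta$-direction, which you need for \eqref{fholdercondition}. Finally, in the paper \eqref{f0est4} is not used to glue the shells (the gluing is a simple radial partition of unity $\sum_k\varphi_k f_k$, which works because each $f_k$ agrees exactly with $f_0$ on $\gamma_0\cap A_k$); \eqref{f0est4} enters only in the final step, to upgrade the shell-by-shell Hölder bound $\|\Gamma|_{A_k}\|_\alpha\lesssim 1$ to a global one by comparing $\partial_r f + 2f\cot_{K_0}r$ at two points in far-apart annuli. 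Your proposal conflates these two roles.
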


Let us first proceed with the proof of Theorem \ref{mainthm}, assuming Proposition \ref{fprop}. The latter provides us with a function $f$ satisfying $f \circ \gamma_0 = f_0$. Now, the function $G$ should be defined by the relation $f = \partial_rG/G - \cot_{K_0}r$. But we must also bear in mind the $\gamma$ should satisfy \eqref{rhoconstructionunit}. In order to achieve both goals, we first construct $G$ on $\gamma_0$, then adjust the $\theta$-coordinate of $\gamma_0$ to obtain the final curve $\gamma$, and then extend $G$ from $\gamma$ to the disc $D$. Set
\begin{equation*}
	G_0(t) := (\sin_{K_0}\rho(t))\exp \int\limits_0^{\rho(t)}f(ue^{i\phi_0(t)}) du, \qquad t \in I.
\end{equation*}
Then $G_0$ is positive, since $\rho \le R \le \pi / 2 \le \pi / (2 \sqrt{K_0})$ by \eqref{rhoest2}. Let
$$\phi(t) := \int\limits_0^t \frac{\sqrt{ 1 - \rho'(s) ^2 } }{ G_0(s) } ds, \qquad t \in I.$$
and define the curve $\gamma$ by
$$\gamma = \rho e ^ {i\phi}.$$

Thanks to the properties of $f$, the change from $\phi_0$ to $\phi$ is not too dramatic.

\begin{lemma}\label{phibilip}
	For every $\eps>0$ there exists $C > 0$ such that if $R\le C$ then the functions $\phi \circ \phi_0^{-1}$ and $\phi_0 \circ \phi^{-1}$ are both $1 + \eps$  - Lipschitz.
\end{lemma}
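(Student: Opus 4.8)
The strategy is to compare the logarithmic derivatives of $\phi$ and $\phi_0$. Since both curves share the same $\rho$, we have $\phi'(t) = \sqrt{1-\dot\rho(t)^2}/G_0(t)$ and $\phi_0'(t) = \sqrt{1-\dot\rho(t)^2}/\sin_{K_0}\rho(t)$, so
\begin{equation*}
	\frac{\phi'(t)}{\phi_0'(t)} = \frac{\sin_{K_0}\rho(t)}{G_0(t)} = \exp\left(-\int_0^{\rho(t)} f(ue^{i\phi_0(t)})\,du\right).
\end{equation*}
By property \eqref{finftycondition} of Proposition \ref{fprop}, $|f(ue^{i\phi_0(t)})| \lesssim u$, so the integral is $O(\rho(t)^2) = O(R^2)$. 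Hence $\phi'/\phi_0' = 1 + O(R^2)$ uniformly on $I$, and by choosing $R$ small enough this ratio lies in $[(1+\eps)^{-1}, 1+\eps]$.

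Once the pointwise bound $\phi'/\phi_0' \in [(1+\eps)^{-1}, 1+\eps]$ is established, the Lipschitz claim for $\phi\circ\phi_0^{-1}$ and $\phi_0\circ\phi^{-1}$ is immediate: both $\phi$ and $\phi_0$ are strictly monotone (their derivatives are positive, assuming $\phi_0' > 0$ as in the setup), so $\phi_0$ is invertible, and for any $s$ in the range of $\phi_0$, the chain rule gives $(\phi\circ\phi_0^{-1})'(s) = \phi'(\phi_0^{-1}(s))/\phi_0'(\phi_0^{-1}(s)) \in [(1+\eps)^{-1}, 1+\eps]$, so $\phi\circ\phi_0^{-1}$ is $(1+\eps)$-Lipschitz. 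The same argument with the roles reversed handles $\phi_0\circ\phi^{-1}$.

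The only subtlety is making sure the estimate $|f(ue^{i\phi_0(t)})| \lesssim u$ is uniform and that the implied constant in $\phi'/\phi_0' = 1+O(R^2)$ is universal — but this follows directly from \eqref{finftycondition}, whose implied constant is universal by Proposition \ref{fprop}. So given $\eps$, one picks $C$ small enough that $O(C^2) \le \eps/2$ (say), and the conclusion holds for all $R \le C$. There is no real obstacle here; the lemma is essentially a bookkeeping consequence of the bound $\norm{f/r}_\infty \lesssim 1$ already secured by Proposition \ref{fprop}.
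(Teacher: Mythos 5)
Your proof is correct and takes essentially the same approach as the paper: compute $\phi'/\phi_0'$ via the explicit formula for $G_0$, reduce to the integral of $f$ along a radial segment, and invoke the bound $\norm{f/r}_\infty\lesssim 1$ from Proposition \ref{fprop} to get $\phi'/\phi_0' = \exp(O(R^2))$. (You also correctly note the minus sign in the exponent, which the paper's one-line computation omits, though this has no effect on the conclusion.)
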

\begin{proof}
	By the chain rule and \eqref{finftycondition}, for every $t \in I$,
	\begin{align*}
		(\phi\circ\phi_0^{-1})'(t) = \frac{\phi'(t)}{\phi_0'(t)} = \exp\int_0^{\rho(t)}f\left(ue^{i\phi_0(t)} \right) du = \exp(O(R^2)).
	\end{align*}
\end{proof}
By Lemma \ref{phibilip} and \eqref{rhoest5} , there exists a universal constant $\tilde C_2>0$ such that if $R \le \tilde C_2$ then there is a homeomorphism $F : D \to D$, bi-Lipschitz with universal constants, which preserves the $r$ - coordinate and satisfies $F(re^{i\phi_0(t)}) = re^{i\phi(t)}$ for every $t \in I$; thus
$$F \circ \gamma_0 = \gamma.$$

Define $G : D \to \RR$ by
\begin{equation}\label{Gdef}
	G(r e ^ { i \theta }) : = (\sin_{K_0}r) \exp \int \limits _0 ^r (f \circ F ^ {-1})\left (u e ^ {i \theta } \right) du.
\end{equation}

Then by Proposition \ref{fprop}, $G$ is $C^2$ in the $r$ variable, and satisfies $G \to 0$ and $G / r \to 1$ as $r \to 0$. The logarithmic derivative of $G$ in the $r$ direction is

\begin{equation}\label{heq}
	h : = \frac{\partial_r G}{G} = \cot_{K_0} r + f \circ F ^ {-1} .
\end{equation}

Differentiating with respect to $r$ again and rearranging we have

\begin{equation}\label{GrrG}
	\frac{\partial_r^2 G}{G}  = (f^2 + 2 f \cot_{K_0} r + \partial _r f - K_0) \circ F ^ {-1}. 
\end{equation}

Set 

$$ K = - \frac{\partial _r ^2 G} {G}.$$

By \eqref{GrrG}, \eqref{fholdercondition} and the fact that $F^{-1}$ is Lipschitz with a universal constant, we have 
\begin{equation}\label{Kalphabound}\norm {K} _\alpha \le L_0\end{equation}
 with respect to the Euclidean metric on $D$, where $L_0$ is a universal constant. Moreover, $K(0) = K_0 \le 1$ by \eqref{GrrG} and \eqref{finftycondition}, and so, since $R \le 1$ we also have 
\begin{equation}\label{Kinftybound}\norm{K}_\infty \le 1 + L_0.\end{equation}
Let 
$$\tilde C_3 = \frac{\pi}{16 \cdot \sqrt{1 + L_0}}$$

and  

$$C_1 = \min\{\tilde C_1, \tilde C_2, \tilde C_3\},$$

where $\tilde C_1$ is the constant from Lemma \ref{phi0bound}, and assume that $R \le C_1$. By \eqref{Kalphabound} and \eqref{Kinftybound},
 $$\min\left\{\norm{K}_\infty, \norm{K}_\alpha^{\frac{1}{1 + \alpha / 2}}\right\} \le 1 + L_0 \le \frac{\pi^2}{64R^2}.$$

Thus $G$ satisfies conditions (a) - (c) of Theorem \ref{Gthm}. It remain to verify \eqref{rhoconstructionunit} and \eqref{rhoconstructiongeoeq}.

By \eqref{Gdef} and \eqref{phi0def},
    \begin{equation*}
    \begin{split}
        \rho'(t)^2+G(\gamma(t))^2\phi'(t)^2 & = \rho'(t)^2+\phi'(t)^2(\sin_{K_0}\rho(t))^2\exp\left(2\int_{0}^{\rho(t)}f(ue^{i\psi^{-1}(\phi(t))})du\right)\\
        & = \rho'(t)^2+\phi'(t)^2(\sin_{K_0}\rho(t))^2\exp\left(2\int_{0}^{\rho(t)}f(ue^{i\phi_0(t)})du\right)\\
        & = \rho'(t)^2+\phi'(t)^2G_0(t)^2\\
        & = 1.
    \end{split}
    \end{equation*}
    By \eqref{f0def}, \eqref{fext} and \eqref{heq},
    \begin{equation*}
    \begin{split}
        \rho''(t)
        & = (f_0(t)+\cot_{K_0}(\rho(t)))(1-\rho'(t)^2)\\
        & = (f(\gamma_0(t))+\cot_{K_0}(\rho(t)))(1-\rho'(t)^2)\\
        & = h(F(\gamma_0(t)))(1-\rho'(t)^2)\\
        & = h(\gamma(t))(1-\rho'(t)^2).
    \end{split}
    \end{equation*}

This finishes the proof of Theorem \ref{mainthm}. \qed

\begin{proof}[Proof of Proposition \ref{fprop}]

We shall construct the extension $f$ at each scale of $r$ separately, and then glue the extensions together using a partition of unity. 
For $k \in \ZZ$, let $A_k$ denote the annulus
$$A_k : = \{2^{k-1} < r < 2^{k+1} \},$$
and write
 $$I_k : = \gamma_0^{-1}(A_k).$$ 

Thus  $\rho\sim 2^k$ on $I_k$. By \eqref{rhoest4}, there exists a constant $\lambda_k>0$ such that $$\phi_0' \sim \lambda_k \qquad \text{ on } I_k.$$

By Corollary \ref{Gcompcor}, $$\phi_0'=\sqrt{1-\dot\rho^2}/\sin_{K_0}\rho \sim \sqrt{1-\dot\rho^2}/\rho,$$  

so 

\begin{equation}\label{2klambdaksim}\sqrt{1 - \dot \rho^2} \sim 2^k\lambda_k \qquad \text{ on } I_k.\end{equation}

\begin{lemma}\label{fklemma}
    For each $k\in\ZZ$ such that $I_k \ne \varnothing$, there is a function $f_k:A_k\to\RR$ satisfying $f_k(\gamma_0(t))=f_0(t)$ for all $t\in I_k$, as well as:
    \begin{equation}\label{fkestimates}
    \norm{f_k}_\infty\lesssim 2^{(1+\alpha)k}, \quad  \norm{f_k}_\alpha\lesssim 2^k, \quad  \norm{\partial_rf_k}_\infty\lesssim 2^{\alpha k}, \quad  \text{ and } \quad \norm{\partial_rf_k}_\alpha\lesssim 1.
\end{equation}
\end{lemma}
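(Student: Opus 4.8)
The idea is to reduce the construction of $f_k$ on the annulus $A_k$ to a Whitney-type extension problem on the real line, using the parametrization of $\gamma_0$ by the radial coordinate $\rho$. On $I_k$ the function $\rho$ is $1$-Lipschitz and, by \eqref{2klambdaksim}, satisfies $\dot\rho^2 = 1 - O(2^{2k}\lambda_k^2)$, so $\rho$ may fail to be a bi-Lipschitz parametrization of $\gamma_0$; I therefore need to cut $I_k$ into the (at most two) subintervals on which $\rho$ is monotone — which is possible since $\rho$ is convex — and argue on each separately. On such a monotone piece, I define a function $\tilde f_k$ on the interval $\rho(I_k) \subseteq (2^{k-1}, 2^{k+1})$ by $\tilde f_k(\rho(t)) = f_0(t)$, and the estimates \eqref{f0est1}, \eqref{f0est2}, \eqref{f0est3} from Proposition \ref{f0ppn} (which applies verbatim to $\rho$ by Proposition \ref{rhoestppn}, with $r \sim 2^k$) translate directly into bounds on $\tilde f_k$, its first divided differences, and its second divided differences on $\rho(I_k)$, after controlling the error terms $\xi(t_1,t_2)$. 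The key observation for handling $\xi$ is that $\xi(t_1,t_2) = \rho(t_1)^{1+\alpha}|\phi_0'(t_1)|^\alpha|t_1-t_2|^\alpha / |\rho(t_1)-\rho(t_2)|$, and on a monotone piece of $\rho$ within $I_k$ one has $|\rho(t_1) - \rho(t_2)| \sim |\dot\rho(t^*)||t_1 - t_2|$ for an intermediate point, while $|\dot\rho| = \sqrt{1 - (1-\dot\rho^2)} $ is bounded below away from zero away from the minimum of $\rho$ — but near the minimum $t_0$, where $\dot\rho$ is small, one must instead use that $\rho$ is there comparable to $m \sim 2^k$ and $|t - t_0| \lesssim \sqrt{\rho(t)^2 - m^2}$, so $\xi$ stays $O(2^{\alpha k})$ after all. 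This is the delicate bookkeeping; once it is done, $\xi(t_1,t_2) \lesssim 2^{\alpha k}$ and the three estimates become: $|\tilde f_k| \lesssim 2^{(1+\alpha)k}$, $|\tilde f_k[\{s_1,s_2\}]| \lesssim 2^{\alpha k} + 2^{\alpha k} \lesssim 2^{\alpha k}$, and $|\tilde f_k[\{s_1,s_2,s_3\}]| \lesssim (\diam\{s_i\})^{\alpha - 1}$, uniformly in the choice of points.

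\textbf{Carrying it out.} With these three inequalities in hand, Corollary \ref{whitneyextcor} (applied with $T_1 \sim 2^{\alpha k}$, $T_2 \sim 1$, on an interval of length $\sim 2^k = (T_1/T_2)^{1/\alpha}$, which matches the length hypothesis there) produces an extension of $\tilde f_k$ to a function $F_k$ on an interval containing $\rho(I_k)$ with $\norm{F_k'}_\infty \lesssim 2^{\alpha k}$ and $\norm{F_k'}_\alpha \lesssim 1$; integrating, $\norm{F_k}_\infty \lesssim 2^{(1+\alpha)k}$ on $\rho(I_k)$ after subtracting a constant pinned down by \eqref{f0est1}. I then define $f_k$ on the full annulus $A_k$ by declaring it radial: $f_k(re^{i\theta}) := F_k(r)$, so that $\partial_r f_k = F_k'$ and the four estimates in \eqref{fkestimates} hold with all Hölder seminorms taken in the Euclidean metric on $A_k$ (the angular direction contributes nothing because $f_k$ is constant in $\theta$). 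The only point requiring care is that when $\rho$ is not monotone on $I_k$ — i.e. when $t_0 \in I_k$ — the two branches $t \mapsto \rho(t)$ ($t \le t_0$) and ($t \ge t_0$) might assign different values $f_0$ to the same radius $r$; but by \eqref{f0est1} both values are $O(2^{(1+\alpha)k})$, so one can simply take, say, the branch with $t \ge t_0$ on the overlap, accepting an $O(2^{(1+\alpha)k})$ jump in the value of $\tilde f_k$ that does not affect the divided-difference bounds at the relevant scale (alternatively, average the two radial extensions, which is cleaner); this is the same device used later when gluing the $f_k$ across scales.

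\textbf{Main obstacle.} The genuine difficulty is not the Whitney extension step, which is mechanical once the data are assembled, but verifying that the error term $\xi(t_1,t_2)$ is $O(2^{\alpha k})$ on $I_k$ — equivalently, that $\rho$ is, up to constants, a bi-Lipschitz reparametrization of arclength on each monotone piece of $I_k$, with the degeneracy near the minimum $t_0$ absorbed by the curvature lower bound \eqref{rhoest1}. Concretely, one writes $\rho(t)^2 - m^2 = \int_{t_0}^t \frac{d}{ds}\rho(s)^2\,ds = \int_{t_0}^t 2\rho(s)\dot\rho(s)\,ds$ and uses $\ddot\rho \sim (1-\dot\rho^2)/\rho \gtrsim 1/\rho \gtrsim 2^{-k}$ from \eqref{rhoest1} to get $|\dot\rho(t)| \gtrsim 2^{-k}|t - t_0|$, hence $|\rho(t) - \rho(t')| \gtrsim 2^{-k}(|t-t_0| + |t'-t_0|)|t - t'|$ on a monotone piece; combined with $\sqrt{1-\dot\rho^2} \lesssim 2^k\lambda_k$ and $\phi_0' \sim \lambda_k$ this yields $\xi(t_1,t_2) \lesssim \rho^{1+\alpha}\lambda_k^\alpha |t_1-t_2|^\alpha \cdot 2^k / (2^{-k}|t_1-t_0||t_1-t_2|) $, and a short case analysis (whether $|t_1 - t_2|$ is comparable to $|t_1 - t_0|$ or much smaller, in which case $\dot\rho$ is already bounded below on $[t_1,t_2]$) closes the bound. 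Once this is pinned down the lemma follows, and I would present it as a self-contained sublemma before invoking Corollary \ref{whitneyextcor}.
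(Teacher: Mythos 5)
Your plan — make $f_k$ a purely radial function $F_k(r)$ and produce $F_k$ by a 1D Whitney extension of the data $F_k(\rho(t))=f_0(t)$ — does not work, and the failure is not just near $t_0$; the key claimed bound $\xi\lesssim 2^{\alpha k}$ is simply false. On $I_k$ one has $\rho\sim 2^k$ and $\phi_0'\sim\lambda_k$, so
\begin{equation*}
\xi(t_1,t_2)\sim \frac{2^{k(1+\alpha)}\lambda_k^\alpha\,|t_1-t_2|^\alpha}{|\rho(t_1)-\rho(t_2)|}.
\end{equation*}
Even in the most favourable regime where $|\dot\rho|\sim 1$ so that $|\rho(t_1)-\rho(t_2)|\sim|t_1-t_2|$, this gives $\xi(t_1,t_2)\sim\delta_k\,|t_1-t_2|^{\alpha-1}$ with $\delta_k=2^{k(1+\alpha)}\lambda_k^\alpha$, and since $\alpha<1$ this diverges as $|t_1-t_2|\to 0$. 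So the divided-difference hypothesis of Corollary~\ref{whitneyextcor} with $T_1\sim 2^{\alpha k}$, $T_2\sim 1$ is unavailable on all of $\rho(I_k)$, and your plan to push the extension through there stalls before the Whitney step. This is precisely why the paper evaluates $\xi$ only on a $\delta_k$-separated net $\{s_j\}$ (where $|s_j-s_{j'}|\gtrsim\delta_k$ forces $\xi\lesssim\delta_k^\alpha\lesssim 2^{\alpha k}$), runs Corollary~\ref{whitneyextcor} only on the data at the net, and then reattaches the missing part of $f_0$ via the $\theta$-dependent correction $f_0(t)-y(\rho(t))$ in \eqref{fkdefcase3}, which vanishes at the net points. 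In the case where $|\dot\rho|$ gets small ($\sqrt{1-\dot\rho^2}\sim 1$, the paper's Case~I), the paper does not take a radial derivative at all: $f_k$ is constant in $r$, so $\partial_r f_k\equiv 0$, and the $\alpha$-Hölder bound for $f_0$ itself is extracted from \eqref{f0est2} after multiplying through by $|\rho(t_1)-\rho(t_2)|$, which kills the bad denominator in $\xi$. A radial ansatz has no such escape: $\partial_rf_k=F_k'$ and $\|F_k'\|_\infty$ must control $\Delta f_0/\Delta\rho$, which is unbounded there.

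The second gap is the non-injectivity of $\rho$ when $t_0\in I_k$. A function $f_k(re^{i\theta})=F_k(r)$ depending on $r$ alone cannot satisfy $f_k(\gamma_0(t))=f_0(t)$ on both branches $I_k^\pm$, since the same radius carries two generally distinct values of $f_0$. Your proposed fixes — pick one branch, or average — both destroy the exact identity $f_k\circ\gamma_0=f_0$, which is needed later (it is what makes \eqref{fext} hold, and \eqref{fext} is what guarantees that the constructed curve $\gamma$ satisfies the geodesic equation \eqref{rhoconstructiongeoeq}); an $O(2^{(1+\alpha)k})$ error here is not acceptable. The paper avoids this entirely by letting $f_k$ depend on $\theta$: the two branches occupy disjoint sectors $A_k'^\pm$ (which, in the only nontrivial sub-case $\rho(0)\le 2^{k-3}$, are shown to be at angular distance $\gtrsim 1$ via \eqref{phi0lower}, hence Euclidean distance $\gtrsim 2^k$), so the extension can be done on each sector independently. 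In short, the angular dependence of $f_k$ is not an artifact of the paper's exposition but a structural necessity, and the case split according to the size of $|\dot\rho|$ and $|I_k|$ is what allows the $\xi$-term to be tamed.
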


\begin{proof}

Recall that by \eqref{rhoest4}, \begin{equation}\label{phi0bound2}|\phi_0| \le \theta_0 \quad \text{for some universal constant} \quad \theta_0 < \pi. \end{equation}

Let $k \in \ZZ$. By \eqref{phi0bound2}, it suffices to define $f_k$ on the sector

$$A_k' : = \{ re^{i\phi_0(t)} \mid r \in \left[2^{k-1},2^{k+1}\right], \ t \in I_k \} \subseteq A_k.$$

\begin{figure}
            \centering
            \includegraphics[width=.9\textwidth]{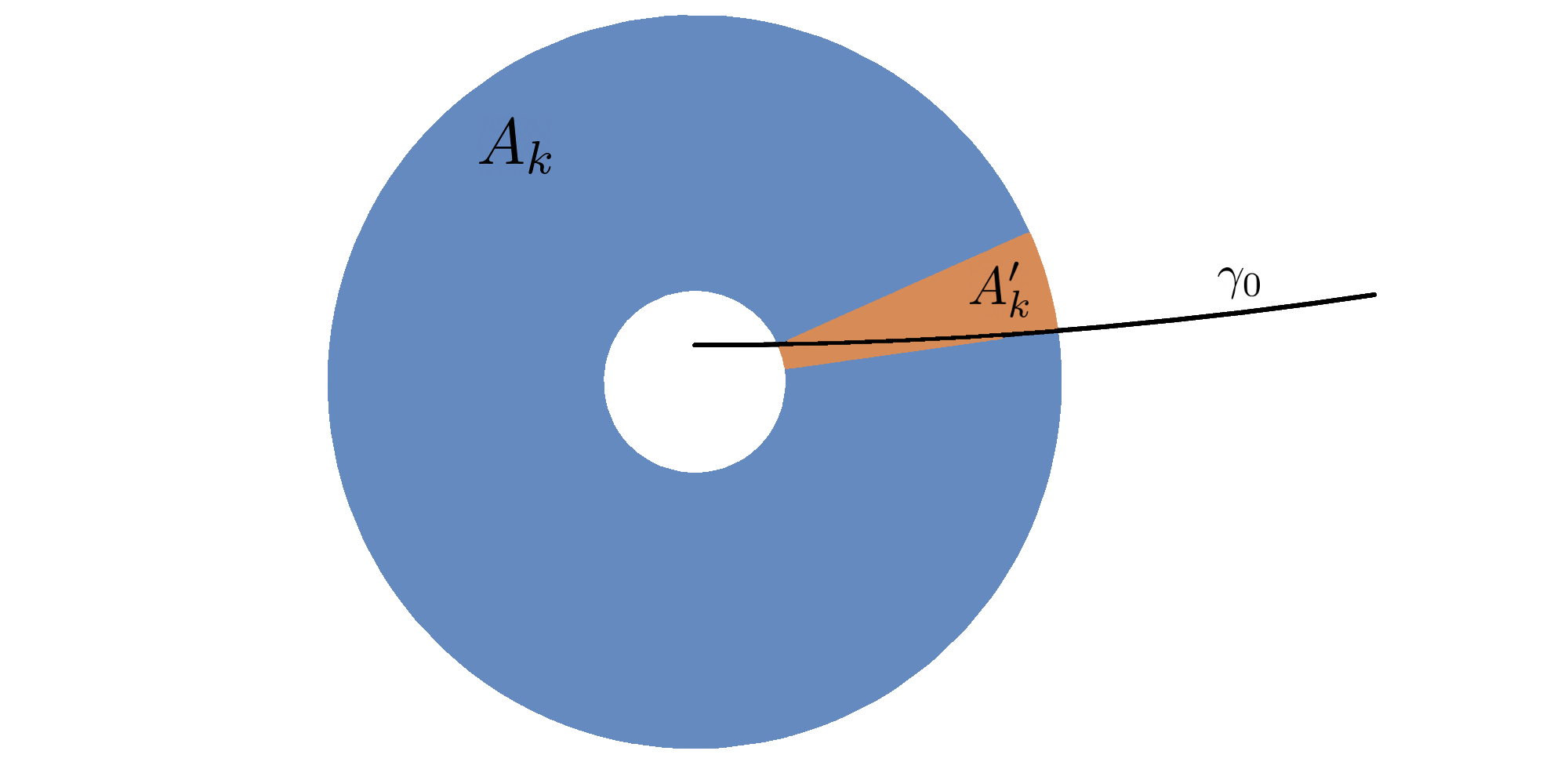}
            \caption{The set $A_k'$.}
            \label{Akprime}
\end{figure}

(see Figure \ref{Akprime}). Indeed, once an extension to $A_k$  has been constructed, it is straightforward to extend it further to $A_k$ maintaining the estimates \eqref{fkestimates}.

Since $I_k \ne \varnothing$, and $\rho$ is convex by \eqref{rhoest1}, $I_k$ consists of either one or two relatively open subintervals of $I$. Assume first that $I_k$ is connected.

To describe the extension $f_k$, we consider several cases.

\begin{itemize}

\item \textbf{ Case I :} $|\rho'(s)| \le 1/2$ for some $s \in I_k$.

\medskip

In this case, \eqref{2klambdaksim} implies that
\begin{equation}\label{2klambdakcase1} \sqrt{1-\rho'(t)^2} \sim 2^k\lambda_k \sim 1 \qquad \text{ for all } t \in I_k. \end{equation}

We apply Proposition \ref{f0ppn} with $J = I_k$ (recall that Proposition \ref{f0ppn} applies to $\rho$ by Proposition \ref{rhoestppn}. It follows from \eqref{f0est2}  that for every $t_1,t_2\in I_k$,
\begin{align*}
    |f_0(t_1)-f_0(t_2)| & \lesssim 2^{k\alpha}|\rho(t_1)-\rho(t_2)|+2^{k(1+\alpha)}\lambda_k^\alpha|t_1-t_2|^\alpha\\
     %\text{\scriptsize$|\rho(t_1)-\rho(t_2)|\lesssim 2^k$ and $2^k\lambda_k\sim 1$}\qquad
		& \lesssim 2^k|\rho(t_1)-\rho(t_2)|^\alpha+2^k|t_1-t_2|^\alpha\\
    %\text{\scriptsize$|\dot\rho|<1$} \qquad 
		& \lesssim 2^k|t_1-t_2|^\alpha,
\end{align*}
where in the second passage we have used $\rho \sim 2^k$ and $2^k\lambda_k\sim 1$, and in the third passage we have used $|\dot\rho|<1$.
Thus 
	\begin{equation}\label{f0alphacase1}\norm{f_0}_\alpha \lesssim 2^k \quad \text{ on } I_k.\end{equation}
Define $f_k:A_k' \to \RR$ by
\begin{equation}\label{fkdefcase1}
	f_k\left(re^{i\phi_0(t)}\right) := f_0(t), \qquad r \in \left[2^{k-1},2^{k+1}\right], \ t \in I_k.
\end{equation}
Then $\norm{f_k}_\infty \lesssim 2^{(1+\alpha)k}$ by \eqref{f0est1}. The last two inequalities in \eqref{fkestimates} are trivial since $f_k$ does not depend on $r$. It thus remains to prove that $\norm{f_k}_\alpha \lesssim 2^k$ on $A_k'$.

From \eqref{2klambdakcase1} and \eqref{f0alphacase1} and \eqref{holderprops} it follows that
	\begin{equation}\label{fkalphacompu1} \norm{f_k}_\alpha=\norm{f_0\circ\phi_0^{-1}\circ\theta}_\alpha\le \norm{\theta}_1^\alpha\norm{\phi_0^{-1}}_1^\alpha\norm{f_0}_\alpha\lesssim 2^{-\alpha k}\lambda_k^{-\alpha}\norm{f_0}_\alpha \lesssim 2^k.\end{equation}

(here $\norm{\cdot}_1$ is Lipschitz seminorm). Thus $\norm{f_k}_\alpha\lesssim 2^k$ on $A_k'$, which finishes the proof of case I.
\medskip

Denote the length of an interval $J \subseteq \RR$ by $|J|$.

\item \textbf{Case II :} $|I_k| \le \left(2^k\lambda_k^\alpha\right)^{\frac{1}{1-\alpha}}$

In this case we define $f_k$ by \eqref{fkdefcase1} again. It follows from \eqref{f0est2} that for every $t_1,t_2\in I_k$,
\begin{align*}
    |f_0(t_1)-f_0(t_2)| & \lesssim 2^{k\alpha}|\rho(t_1)-\rho(t_2)|+2^{k(1+\alpha)}\lambda_k^\alpha|t_1-t_2|^\alpha\\
    & \le 2^{k\alpha}|t_1 - t_2|+2^{k(1+\alpha)}\lambda_k^\alpha|t_1-t_2|^\alpha.
\end{align*}
The assumption $|I_k| \le \left(2^k\lambda_k^\alpha\right)^{\frac{1}{1-\alpha}}$ implies that the first term is dominated by the second, and therefore $\norm{f_0}_\alpha \lesssim 2^{k(1+\alpha)}\lambda_k^\alpha$, and we can repeat the calculation \eqref{fkalphacompu1}. 

\medskip

\item \textbf{Case III :} $|\rho'(t)| > 1/2$ for all $t \in I_k$, and $|I_k| \ge \lambda_k^\alpha 2^{k(1+\alpha)}$.

\medskip

This case is the most involved. The idea is to first find a $C^{1,\alpha}$ function depending only on $r$, and which agrees with $f_0$ on a discrete net, and then to add to it a correction which is purely a function of $\theta$ so that the resulting function agrees with $f_0$ on all of $\gamma_0\cap A_k$. Our tools will be Proposition \ref{f0ppn} and Whitney's extension theorem in one dimension. Note that we may use Propostion \ref{f0ppn} thanks to Proposition \ref{rhoestppn}. Set
 $$\delta_k:=\lambda_k^\alpha 2^{k(1+\alpha)}$$
 and let $\{s_j\}_{j\in\mathcal J}$ be a $\delta_k$-net in $I_k$. By $\delta_k$-net we mean that $|s_j-s_{j'}|\gtrsim \delta_k$ for every $j,j'\in\mathcal J$ and that for every $t\in I_k$ there is some $j\in\mathcal J$ such that $|s_j-t|\le \delta_k$. Since $I \ge \delta_k$, we may assume that there are at least two distinct $s_j$'s. Let 

\begin{equation}\label{xjdef}x_j:=\rho(s_j).\end{equation}

Note that the $x_j$ are distinct since $\rho$ is monotone; in fact,  since $\dot\rho \sim 1$,
\begin{equation}\label{sjxj}|s_j - s_{j'}| \sim |x_j - x_{j'}| \quad \text{ for all $j \in \mathcal J$} \end{equation}

Define a function $$y:\{x_j\}_{j\in\mathcal J}\to\RR$$ by 
\begin{equation}\label{ydef}y(x_j):=f_0(s_j),\qquad j\in\mathcal{J}.\end{equation}

\begin{claim*} The function $y$ can be extended from the set $\{x_j\}$ to the interval $[2^{k-1}, 2^{k+1}]$ while satisfying 
\begin{equation}\label{yests}\norm{y}_\infty\lesssim 2^{k(1+\alpha)}, \qquad \norm{y'}_\infty\lesssim 2^{k\alpha} \qquad \text{ and } \quad \norm{y'}_\alpha\lesssim 1.\end{equation}
\end{claim*}

\begin{proof}
By \eqref{f0est1}, we have for all $j\in\mathcal J$,
\begin{equation*}
\begin{split}
    |y(x_j)|  \lesssim 2^{k(1+\alpha)}.
\end{split}
\end{equation*}
By \eqref{f0est2}, for $j\ne j'\in\mathcal{J}$,
\begin{equation*}
\begin{split}
    |y(x_j)-y(x_{j'})| & \lesssim 2^{k\alpha}|x_j-x_{j'}|+2^{k(1+\alpha)}\lambda_k^\alpha|s_j-s_{j'}|^\alpha\\
    \text{\scriptsize$\{s_j\}$ is a $\delta_k$-net}\qquad &  \le 2^{k\alpha}|x_j-x_{j'}|+|s_j-s_{j'}|^{1+\alpha}\\
    \text{\scriptsize\eqref{sjxj}}\qquad& \lesssim 2^{k\alpha}|x_j-x_{j'}|+|x_j-x_{j'}|^{1+\alpha}\\
    \text{\scriptsize$x_j,x_{j'}\sim 2^k$}\qquad & \lesssim 2^{k\alpha}|x_j-x_{j'}|.
\end{split}
\end{equation*}
By \eqref{f0est3} and \eqref{sjxj}, for distinct $j,j',j''\in\mathcal{J}$ we have
\begin{equation*}
\begin{split}
    \left|\frac{y(x_j)-y(x_{j'})}{x_j-x_{j'}}-\frac{y(x_{j'})-y(x_{j''})}{x_{j'}-x_{j''}}\right|
    & \lesssim  \diam\{s_j,s_{j'},s_{j''}\}^\alpha\\ &+2^{k(1+\alpha)}\lambda_k^\alpha(|s_j-s_{j'}|^{\alpha-1}+|s_{j'}-s_{j''}|^{\alpha-1})\\
    \text{\scriptsize$\{s_j\}$ is a $\delta_k$-net}\qquad & \lesssim \diam\{s_j,s_{j'},s_{j''}\}^\alpha+2^{k(1+\alpha)}\lambda_k^\alpha\delta_k^{\alpha-1}\\
    & =\diam\{s_j,s_{j'},s_{j''}\}^\alpha+ \delta_k^\alpha\\
    \text{\scriptsize$\{s_j\}$ is a $\delta_k$-net + \eqref{sjxj}}\qquad & \lesssim  \diam\{x_j,x_{j'},x_{j''}\}^\alpha.
\end{split}
\end{equation*}
Corollary \ref{whitneyextcor} now implies that we can extend $y$ from the set $\{x_j\}_{j\in\mathcal J}$ to the  interval $[R_{k-1},R_{k+1}]$ with  $\norm{y'}_\infty\lesssim 2^{k\alpha}$ and $\norm{y'}_\alpha\lesssim 1$. Since $y(x_j)\lesssim 2^{k(1+\alpha)}$ it also follows that $\norm{y}_\infty\lesssim 2^{k(1+\alpha)}$. We have proved \eqref{yests}.
 By \eqref{xjdef} and \eqref{ydef}, the function $y\circ r\circ\gamma_0$  agrees with $f_0$ on the net $\{s_j\}_{j\in\mathcal J}$:
 \begin{equation*}
     (y\circ r)(\gamma_0(s_j))=y(\rho(s_j))=y(x_j)=f_0(s_j) \quad \text{ for all } j\in\mathcal J. 
 \end{equation*}
\end{proof}

We now define $f_k:A_k'\to\RR$ by letting
\begin{equation}\label{fkdefcase3}
    f_k(re^{i\phi_0(t)}):=y(r)+f_0(t)-y(\rho(t)), \qquad r \in \left[2^{k-1},2^{k+1}\right], \ t \in I_k.
\end{equation}
We have 
$$f_k(\gamma_0(t))=y(\rho(t))+f_0(t)-y(\rho(t))=f_0(t) \quad \text{ for all } \quad t\in I_k,$$
 and by \eqref{yests}, $f_k$ is $C^{1,\alpha}$ in $r$ with 
\begin{equation}\label{fkrcase3}\norm{\partial_r f_k}_\infty\lesssim 2^{k\alpha} \qquad \text{ and  } \qquad \norm{\partial_r f_k}_\alpha\lesssim 1.\end{equation}

To see that $\norm{f_k}_\infty\lesssim 2^{k(1+\alpha)}$,  note that each point on $A_k'$ is joined to a point on $\gamma_0$ by a radial line segment of length $\lesssim 2^k$, and $|f_0|\lesssim 2^{k(1+\alpha)}$ on $I_k$ by \eqref{f0est1}, so $$\norm{f_k}_\infty\lesssim \norm{f_0\vert_{I_k}}_\infty+ 2^k\cdot\norm{\partial_rf_k}_\infty\lesssim 2^{k(1+\alpha)}+2^k\cdot 2^{k\alpha}\lesssim 2^{k(1+\alpha)}.$$

It remains to estimate $\norm{f_k}_\alpha$. Fix $r\in \left[2^{k-1},2^{k+1}\right]$; we now prove that 

\begin{equation}\label{fkthetaalpha}\norm{f_k\left(re^{i\phi_0(\cdot)}\right)}_\alpha\lesssim \delta_k \qquad \text{ on } I_k.\end{equation}

Let $t,t'\in I_k$, and assume first that $|t-t'|\le \delta_k$. Take $s_j\ne s_{j'}$ with $\diam\{t,t',s_j,s_{j'}\}\le 2 \delta_k$. Then:
\begin{equation*}
\begin{split}
    |f_k(re^{i\phi_0(t)})-f_k(re^{i\phi_0(t')})|& = |y(r)-f_0(t)-y(\rho(t))-y(r)+f_0(t')+y(\rho(t')|\\
    & = |f_0(t)-f_0(t')-y(\rho(t))+y(\rho(t'))|\\
    & = |\rho(t)-\rho(t')|\left|\frac{f_0(t)-f_0(t')}{\rho(t)-\rho(t')}-\frac{y(\rho(t))-y(\rho(t'))}{\rho(t)-\rho(t')}\right|\\
    \substack{\scriptsize\textstyle\text{ $\norm{y'}_\alpha\lesssim 1$ and} \\  \scriptsize\diam\{\rho(t),\rho(t'),x_j,x_{j'}\}\lesssim\delta_k}\qquad & \lesssim |\rho(t)-\rho(t')|\left(\left|\frac{f_0(t)-f_0(t')}{\rho(t)-\rho(t')}-\frac{y(x_j)-y(x_{j'})}{x_j-x_{j'}}\right|+\delta_k^\alpha\right)\\
    & = |\rho(t)-\rho(t')|\left(\left|\frac{f_0(t)-f_0(t')}{\rho(t)-\rho(t')}-\frac{f_0(s_j)-f_0(s_{j'}))}{\rho(s_j)-\rho(s_{j'})}\right|+\delta_k^\alpha\right)\\
    \text{\scriptsize by \eqref{f0est3} and \eqref{sjxj}} \qquad & \lesssim |\rho(t)-\rho(t')|\left(\delta_k^\alpha+2^{k(1+\alpha)}\lambda_k^\alpha(|t-t'|^{\alpha-1}+|s_j-s_{j'}|^{\alpha-1})\right)\\
    \text{\scriptsize\eqref{sjxj}}\qquad  & \lesssim \delta_k^\alpha|t-t'|+\delta_k|t-t'|^\alpha\\
    \text{\scriptsize$|t-t'|\le\delta_k$}\qquad & \lesssim \delta_k|t-t'|^\alpha.
\end{split}
\end{equation*}

Now assume $|t-t'|\ge \delta_k$, and let $s_j\ne s_{j'}$ satisfy $|t-s_j|,|t'-s_{j'}|\le\delta_k$.
 Since $y(\rho(s_{j}))=f_0(s_j)$ for all $j\in\mathcal J$, 
\begin{equation*}
\begin{split}
    |f_k(re^{i\phi_0(t)})-f_k(re^{i\phi_0(t')})| & = |f_0(t)-y(\rho(t))-f_0(t')+y(\rho(t'))|\\
    & \le |f_0(t)-f_0(s_j)+y(\rho(s_j))-y(\rho(t))|\\
    & +|f_0(t')-f_0(s_{j'})+y(\rho(s_{j'}))-y(\rho(t'))|\\
    & \lesssim \delta_k(|t-s_j|^\alpha+|t'-s_{j'}|^\alpha)\\
    & \lesssim \delta_k^{1+\alpha}\\
    & \lesssim \delta_k|t-t'|^\alpha.
\end{split}
\end{equation*}
where in the third passage we have applied the previous calculation to the pairs $t,s_j$ and $t',s_{j'}$. We have proved \eqref{fkthetaalpha}.

Let $z_1,z_2 \in A_k'$ and write $z_j=r_je^{i\phi_0(t_j)}$ for $r_j\in[2^{k-1},2^{k+1}]$ and $t_j \in I_k$. Then
\begin{align*}
    |f_k(z_1)-f_k(z_2)|& \le |f_k(r_1e^{i\phi_0(t_1)})-f_k(r_1e^{i\phi_0(t_2)})|\\
    &+|f_k(r_1e^{i\phi_0(t_2)})-f_k(r_2e^{i\phi_0(t_2)})|\\
    \text{\scriptsize\eqref{fkrcase3} and \eqref{fkthetaalpha}}\qquad& \lesssim \delta_k|t_1-t_2|^\alpha+2^{k\alpha}|r_1-r_2|\\
    \text{\scriptsize$\dot\phi_0\sim\lambda_k$ on $I_k$} \qquad & \lesssim \delta_k\cdot \lambda_k^{-\alpha}|\phi_0(t_1)-\phi_0(t_2)|^\alpha+2^{k\alpha}\cdot |r_1-r_2|\\
    \text{\scriptsize$\delta_k=2^{k(1+\alpha)}\lambda_k^\alpha$ and $|r_1-r_2|\lesssim 2^k$}\qquad& \lesssim 2^k\cdot (2^{k\alpha}|\phi_0(t_1)-\phi_0(t_2)|^\alpha+|r_1-r_2|^\alpha)\\
    & \sim 2^k|z_1-z_2|^\alpha.
\end{align*}

This finishes the proof of \eqref{fkestimates} in case III.

\item \textbf{Case IV:} $|\rho'(t)| > 1/2$ for all $t \in I_k$, and $\left(2^k\lambda_k^\alpha\right)^{\frac{1}{1-\alpha}} \le |I_k| \le \lambda_k^\alpha2^{k(1+\alpha)}.$

In this case, the extension is in fact the same as in Case III, with the interpolant $y$ taken to be affine-linear. Let $s_1,s_2 \in I_k$ satisfy $|s_1 - s_2| \sim I_k$.  Let 
$$y(x)=f_0(s_1)+(f_0(s_2)-f_0(s_1))(x-\rho(s_1))/(\rho(s_2)-\rho(s_1)), \quad x\in \left[2^{k-1},2^{k+1}\right],$$

and define $f_k$ by \eqref{fkdefcase3}. Then $f_0 = f_k\circ\gamma_0$ on $I_k$, and 
$$\partial_rf_k \equiv \frac{\Delta f_0}{\Delta \rho}(s_1,s_2)$$ 
in the notation of Proposition \ref{f0ppn}, so by \eqref{f0est1} and the fact that $\dot \rho \sim 1$, we have 
\begin{equation}\label{fkrinftycase4}\norm{\partial_rf_k}_\infty \lesssim 2^{k\alpha} + 2^{k(1+\alpha)}\lambda_k^\alpha|s_1-s_2|^{\alpha - 1 }\lesssim 2^{k\alpha},\end{equation}
where the second inequality holds because $|s_1 - s_2| \sim |I_k| \ge \left(2^k\lambda_k^\alpha\right)^{\frac{1}{1-\alpha}}$. We also have trivially $\norm{\partial_rf_k}_\alpha = 0$. Since $f_k \circ \gamma_0 = f$, and every point in $A_k'$ is can be joined to a point on $\gamma_0$ by a radial segment of length $\lesssim 2^k$, \eqref{fkrinftycase4} implies that $\norm{f_k}_\infty \lesssim 2^{k(1+\alpha)}$. The proof that $\norm{f_k}_\alpha \lesssim 2^k$ is as in case III.

\end{itemize}

It remains to deal with the case where $I_k$ consists of two distinct intervals. Recall that 
$$I_k = \gamma_0^{-1}\left(\{2^{k-1} < r < 2^{k+1}\}\right) = \rho^{-1}\left((2^{k-1},2^{k+1})\right).$$
 The function $\rho$ attains a minimum at $0$, and so we can write $I_k = I_k^- \cup I_k^+$ where $I_k^- \subseteq (-\infty,0)$ and $I_k^+ \subseteq (0,\infty)$ are intervals.

We may assume that $\rho(0) \le 2^{k-3}$. Indeed, otherwise, by Corollary \ref{phi0varycor} and \eqref{2klambdaksim} we have $1-\rho'(t)^2 \sim 1 -\rho'(0)^2 = 1$ for $t \in I_k$, and we can proceed as in Case I.

So assume that $\rho(0) \le 2^{k-3}$, whence $\rho(t) \ge 4 \rho(0)$ for all $t\in I_k$. We claim that \begin{equation}\label{phi0lower}|\phi_0| \ge \theta_1 \quad \text{ on } I_k \quad \text{for a universal constant $\theta_1 > 0$.}\end{equation}

We prove \eqref{phi0lower} on $I_k^+$, and the proof is identical for $I_k^-$. There exists a unique $t_1 > 0$ satisfying $\rho(t_1) = 2\rho(0)$. Note that $t_1 < \inf I_k^+$. By  \eqref{rhoest1}, we have that  $1-\dot\rho^2$ decreases by at most some constant factor on the interval $[0,t_1]$ (see the computation \eqref{logderivest}). Since $\rho'(0) = 0$, It follows that $\dot\rho\le 1-c$ on $[0,t_1]$, whence $$d\log\rho/dt\le(1-c)/\rho\le (1-c)/\rho(0) \qquad \text{ on } [0,t_1],$$
 for some universal $0<c<1$. Thus 
    \begin{equation*}
        \log 2=\int_0^{t_1}(\log\rho)'(s)ds\le t_1(1-c)/\rho(0).
    \end{equation*}
    Since $\phi_0'(0)=\sqrt{1-\rho'(0)^2}/\sin_{K_0}m=1/\sin_{K_0}m\sim 1/m$ by Corollary \ref{Gcompcor}, where $m = \min\rho=\rho(0)$, the estimate \eqref{rhoest4} implies that $\dot\phi_0\sim 1/m$ on $[0,t_1]$. Since $\phi_0$ is increasing, it follows that for every $t \in I_k^+$,
    \begin{equation*}
        \phi_0(t) \ge \phi_0(t_1)=\int_0^{t_1}\phi_0'(s)ds\sim t_1/m\ge \log 2/(1-c).
    \end{equation*}
Thus \eqref{phi0lower} is proved. Write $A_k' = A_k'^+ \cup A_k'^-$, where
$$A_k'^\pm : = \left\{ re^{i\phi_0(t)} \mid r \in [2^{k-1},2^{k+1}], \ t \in I_k^\pm \right\} \subseteq A_k'.$$
Then \eqref{phi0bound2} and \eqref{phi0lower} imply that the distance between the sets $A_k'^+$ and $A_k'^-$ is $\gtrsim 2^k$. Therefore, we can construct the extension $f_k$ on $I_k^+$ and $I_k^-$ separately, according to the cases above, and it is not hard to see that since the function $f_k$ satisfies \eqref{fkestimates} on $A_k'^+$ and $A_k'^-$, it will satisfy \eqref{fkestimates} on their union $A_k'$. This finishes the proof in the case where $I_k$ consists of two intervals, and thus Lemma \ref{fklemma} is proved.
\end{proof}

We now glue together the local extensions $f_k$ obtained via Lemma \ref{fklemma}. We also define
\begin{equation}\label{fknothing} f_k \equiv 0 \qquad \text{ whenever } I_k = \varnothing.\end{equation}
Let $\varphi_k : \RR^2 \to [0,1]$, $k\in \ZZ$ be smooth functions satisfying
\begin{equation}\label{varphik} \mathrm{supp}\varphi_k \subseteq A_k, \qquad \sum_k\varphi_k \equiv 1, \qquad |\nabla\varphi|\lesssim 2^{-k}, \qquad |\mathrm{Hess}\varphi|\lesssim 2^{-2k}.\end{equation}
(take $\varphi_k = \varphi(2^{-k}r)$, where $\varphi:(0,\infty) \to [0,1]$ is a suitable bump function). Note that each point of $\RR^2$ lies in the support of at most two such functions. Define 
\begin{equation}\label{fconstdef}
	f : = \sum_k\varphi_kf_k.
\end{equation}

Since $\sum_{k}\varphi_k\equiv 1$, and $f_k\circ\gamma_0=f_0$ on each $I_k$, we have that ${}f\circ\gamma_0=f_0$.

The function ${}f$ is $C^1$ in $r$, and
\begin{equation}\label{delrf}
    \partial_rf=\sum_{k}(\partial_r\varphi_k) f_k+\varphi_k(\partial_rf_k).
\end{equation}
By \eqref{varphik} and \eqref{fkestimates}, and the local finiteness of the sums in \eqref{fconstdef} and \eqref{delrf},
\begin{equation}\label{fdelrfbound}
    |{}f|\lesssim r^{1+\alpha} \quad \text{ and } \quad |\partial_r{}f|\lesssim r^\alpha. 
\end{equation}
In particular, condition \eqref{finftycondition} holds, while \eqref{fvanish} follows from \eqref{fknothing}.

To finish the proof it remains to prove \eqref{fholdercondition}, i.e. that $\norm{\Gamma}_\alpha\lesssim 1$, where $$\Gamma:={}f^2+2{}f\cot_{K_0}r+\partial_r{}f.$$
It follows from \eqref{holderprops}, \eqref{varphik}, \eqref{fkestimates}, \eqref{fconstdef} and \eqref{delrf}, that
\begin{align}\label{flocalalpha}
    \norm{{}f\vert_{A_k}}_\alpha&\lesssim 2^k&\text{ and}&& \norm{\partial_r{}f\vert_{A_k}}_\alpha&\lesssim 1 & \text{ for all } k \in \ZZ.
\end{align}
Putting together \eqref{fdelrfbound},\eqref{flocalalpha}, the estimates $$\norm{\cot_{K_0}r\vert_{A_k}}_\infty\lesssim 2^{-k} \quad \text{ and } \quad  \norm{\cot_{K_0}r\vert_{A_k}}_\alpha\lesssim 2^{-k(1+\alpha)},$$ and the fact that $f$ is supported on a disc with radius $\lesssim 1$ , we get
\begin{align}\label{Gammalocal}
    \norm{\Gamma\vert_{A_k}}_\alpha  \lesssim 1 \qquad \text { for all } k \in \ZZ.
\end{align}
Our goal is to make this estimate global. We also note that by \eqref{fdelrfbound},
\begin{equation}\label{Gammabound} \norm{\Gamma\vert_{A_k}}_\infty \lesssim 2^{\alpha k}. \end{equation}

Given any $z,z'\in\RR^2$, we would like to show that 
\begin{equation}|\Gamma(z)-\Gamma(z')|\lesssim |z-z'|^\alpha.\end{equation}
Let $k,k'$ satisfy $z\in A_k$ and $z' \in A_{k'}$, where without loss of generality $k'\ge k$. By \eqref{Gammalocal}, we may even assume that $k'  > k + 2$. This implies that $|z-z'| \gtrsim 2^{k'}$, and therefore it suffices to prove that
\begin{equation}\label{GammazGammaz'}|\Gamma(z)-\Gamma(z')|\lesssim 2^{k'\alpha}. \end{equation}
By \eqref{Gammalocal}, we may also replace $z$ and $z'$ by any other pair of points lying in $A_k,A_{k'}$ respectively. 

If $A_k\cap\gamma_0 = \varnothing$, then by \eqref{fknothing}, we may assume that $f_k\equiv 0 $ in a neighbourhood of $z$, and then by \eqref{Gammabound},

$$|\Gamma(z) - \Gamma(z')| = |\Gamma(z')| \lesssim 2^{k'\alpha}.$$

We can argue similarly if $A_{k'}\cap \gamma_0 = \varnothing$. So from here on we can assume that both $A_k\cap\gamma_0$ and $A_{k'}\cap\gamma_0$ are nonempty, and that $z,z'$ both lie on $\gamma_0$. Write

$$z = \gamma_0(t), \qquad z' = \gamma_0(t').$$

There is no loss of generality in assuming that $0\le t<t'$, because if $t'<0<t$ we can compare both $\Gamma(z)$ and $\Gamma(z')$ to $\Gamma(\gamma_0(0))$. The rest of the cases follow by symmetry. So, to summerize, we must prove that

$$|\Gamma(z) - \Gamma(z')| \lesssim 2^{k'}, \qquad \text{ where } z = \gamma_0(t) \in A_k, \  z' = \gamma_0(t') \in A_{k'} \ , \qquad 0\le t' < t.$$

To this end we would like to use \eqref{f0est4}. Since $\rho(t')=|z'|\ge 2^{k'-1}\ge 2^{k+1}\ge 2|z|=2\rho(t)$, there are $t<s\le s'<t'$ such that $\rho(s)=2\rho(t)$ and $\rho(s')=\rho(t')/2$. It follows that
\begin{equation}\label{rhotminusrhos}
    |\rho(t)-\rho(s)|\sim \rho(t)\sim 2^k \quad \text{ and } \quad  |\rho(t')-\rho(s')|\sim \rho(t')\sim 2^{k'}.
\end{equation}
%, and that  $|\rho(t')-\rho(t)|\sim|\rho(s')-\rho(s)|$ and $|t-s|\sim|\rho(t)-\rho(s)|\sim R_k$ and $|t'-s'|\sim|\rho(t')-\rho(s')|\sim R_{k'}$.
Using \eqref{rhotminusrhos}, \eqref{fext} and the local estimates \eqref{flocalalpha}, we have
\begin{align*}
    \frac{f_0(t)-f_0(s)}{\rho(t)-\rho(s)}& = \frac{{}f(z)-{}f(\gamma_0(s))}{\rho(t)-\rho(s)}\\
    & = \frac{{}f(\rho(t)e^{i\phi_0(t)})-{}f(\rho(t)e^{i\phi_0(s)})}{\rho(t)-\rho(s)}+\frac{{}f(\rho(t)e^{i\phi_0(s)})-{}f(\rho(s)e^{i\phi_0(s)})}{\rho(t)-\rho(s)}\\
    & =\partial_r{}f(z)+O(2^{k\alpha})
\end{align*}
and similarly 
\begin{equation*}
    \frac{f_0(t')-f_0(s')}{\rho(t')-\rho(s')}=\partial_r{}f(z')+O(2^{k'\alpha}).
\end{equation*}
Thus by \eqref{f0est4},
\begin{equation*}\label{delrfplus2fcotfinal}
\begin{split}
    &|\partial_r{}f(z)+2{}f(z)\cot_{K_0}(|z|)-\partial_r{}f(z')-2{}f(z')\cot_{K_0}(|z'|)|\\
    & = \left|\frac{f_0(t)-f_0(s)}{\rho(t)-\rho(s)}+2 f_0(t)\cot_{K_0}\rho(t)-\frac{f_0(t')-f_0(s')}{\rho(t')-\rho(s')}-2f_0(t')\cot_{K_0}\rho(t')\right|+O(2^{k'\alpha})\\
    & \lesssim |t'-t|^\alpha+2^{k(1+\alpha)}\lambda_k^\alpha|t-s|^\alpha/|\rho(t)-\rho(s)|+2^{k'(1+\alpha)}\lambda_{k'}^\alpha|t'-s'|^\alpha/|\rho(t')-\rho(s')|+2^{k'\alpha}\\
    & \lesssim 2^{k'\alpha}+2^{k(1+\alpha)}\lambda_k^\alpha\cdot 2^{k\alpha}/2^k+2^{k'(1+\alpha)}\lambda_{k'}^\alpha\cdot 2^{k'\alpha}/2^{k'}+2^{k'\alpha}\\
    & \lesssim 2^{k'\alpha}
\end{split}
\end{equation*}
(we have used \eqref{rhometriccond} and \eqref{rhotminusrhos} in the third passage, and \eqref{2klambdaksim} in the penultimate one)

By \eqref{fdelrfbound} and \eqref{flocalalpha}, we have that $\norm{{}f^2}_\alpha\lesssim 2^{k(1+\alpha)}\cdot 2^k\le R^{2+\alpha} \lesssim 1$ on $A_k$ and $|\partial_r({}f^2)|\lesssim 2^{k'(1+\alpha)}\cdot 2^{k'\alpha}\lesssim R^{1+2\alpha}\lesssim 1$ on $B_{2^{k'}}(0)$, so 
$$|{}f(z)^2-{}f(z')^2|=|{}f(\tilde z)^2-{}f(z')^2|+O(2^{k\alpha}) \lesssim 2^{k'\alpha},$$
 where $\tilde z$ is any point in $A_k$ lying on the same radial ray as $z'$. Putting together the last two calculations, we obtain \eqref{GammazGammaz'} and finish the proof of the proposition.
\end{proof}

\end{document}